\def\ep{\varepsilon}
\def\C{\mathcal{C}}
\def\bC{\mathbb{C}}
\def\F{\mathcal{F}}
\def\R{\mathbb{R}}
\def\E{\mathbb{E}}
\def\P{\mathbb{P}}
\newtheorem{theorem}{Theorem}[section]
\newtheorem{proposition}[theorem]{Proposition}
\newtheorem{lemma}[theorem]{Lemma}
\newtheorem{definition}[theorem]{Definition}
\newcommand*\samethanks[1][\value{footnote}]{\footnotemark[#1]}
\begin{document}

\title{Weak approximation of the complex Brownian sheet \\from a L\'evy sheet
and applications to SPDEs}

\author{Xavier Bardina
\footnote{Departament de Matem\`atiques, Universitat Aut\`{o}noma de
Barcelona, 08193 Bellaterra (Barcelona), Catalonia. E-mail
addresses: bardina@mat.uab.cat, juanpablo@mat.uab.cat,
quer@mat.uab.cat. Research supported by the grant
PGC2018-097848-B-I00 of the Ministerio de Econom\'ia y
Competitividad. J.P. M\'arquez was supported by a fellowship of
CONACYT-M\'exico} \and Juan Pablo Márquez \samethanks[1] \and
Llu\'{i}s Quer-Sardanyons \samethanks[1] \footnote{Corresponding
author.}
 }

\date{\today}

\maketitle

\begin{abstract}
We consider a L\'evy process in the plane and we use it to construct a family of complex-valued random fields that
we show to converge in law, in the space of continuous functions, to a complex Brownian sheet. We apply this result to obtain weak approximations of the
random field solution to a semilinear one-dimensional stochastic heat equation driven by the space-time
white noise.
\end{abstract}

\bigskip

  \noindent 2000 \emph{Mathematics Subject Classification}: 60F17; 60G15; 60H15.

  \medskip

  \noindent \emph{Key words and phrases}: Brownian sheet; L\'evy sheet; stochastic heat equation; weak approximation.

\section{Introduction}
\label{sec:intro}

Let $\{N(x,y); \, x,y\geq 0\}$ be a Poisson process in the plane and $S,T>0$. For any $\ep>0$, define the following random field:
\begin{equation}
x_\ep(s,t):=\ep
\int_0^{\frac{t}{\ep}}\int_0^{\frac{s}{\ep}}\sqrt{xy}\,
(-1)^{N(x,y)}dxdy,  \qquad (s,t)\in[0,S]\times[0,T]. \label{eq:888}
\end{equation}
 Then, in
\cite{BJ97} (see Theorem 1.1 therein) the authors proved that, as
$\ep$ tends to zero, $x_\ep$ converges in law, in the Banach space
$\C([0,S]\times[0,T])$ of continuous functions, to the Brownian
sheet on $[0,S]\times[0,T]$. It is worth mentioning that this result
was motivated by its one-dimensional counterpart, which was proved
by Stroock in \cite{Stroock} and says the following: the family of
processes
\[
 y_\ep(t):=\ep \int_0^{\frac{t}{\ep^2}} (-1)^{N(s)} ds, \; \ep>0,
\]
where $N$ denotes a standard Poisson process, converges in law, in the space of continuous functions,
to a standard Brownian motion. Note that this kind of processes had already been used by Kac in \cite{Kac}
in order to express the solution of the telegrapher's equation in terms of a Poisson process.

In the present paper, we aim to extend the above result of
\cite{BJ97} to the case where the Poisson process is replaced by a
Lévy sheet $\{L(x,y); \, x,y\geq 0\}$ (see Section \ref{sec:prel}
for the precise definition). Indeed, note that expression
$(-1)^{N(x,y)}$ can be written in terms of the complex exponential
as $e^{i\pi N(x,y)}$. Hence, when replacing $N$ by $L$, we will use
the form $e^{i\pi L(x,y)}=\cos(\pi L(x,y))+i\sin(\pi L(x,y))$ since
the expression $(-1)^{L(x,y)}$ may not be well-defined in $\R$.
On the other hand, we will replace $\pi$ by an
arbitrary angle $\theta\in(0,2\pi)$. The main result of the paper
is the following:

\begin{theorem}\label{thm:main}
Let $\{L(x,y); \, x,y\geq 0\}$ be a Lévy sheet and
$\Psi(\xi):=a(\xi)+ib(\xi)$, $\xi\in \R$, its Lévy exponent. Let
$\theta\in (0,2\pi)$ and $S,T>0$, and define, for any $\ep>0$ and
$(s,t)\in[0,S]\times[0,T]$,
\begin{equation}
X_\ep(s,t):=\ep K
\int_0^{\frac{t}{\ep}}\int_0^{\frac{s}{\ep}}\sqrt{xy}\,
\{\cos(\theta L(x,y))+i\sin(\theta L(x,y))\}dxdy, \label{eq:1}
\end{equation}
where the constant $K$ is given by
\begin{equation}\label{eq:50}
K=\frac{1}{\sqrt{2}}
\frac{a(\theta)^2+b(\theta)^2}{a(\theta) }.
\end{equation}
Assume that $a(\theta)a(2\theta)\ne 0$. Then, as $\ep$ tends to
zero, $X_\ep$ converges in law, in the space of complex-valued
continuous functions $\C([0,S]\times[0,T];\bC)$, to a complex
Brownian sheet.
\end{theorem}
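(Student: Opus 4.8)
The plan is to follow the standard two-step scheme for proving weak convergence in $\C([0,S]\times[0,T];\bC)$: first establish the convergence of finite-dimensional distributions of $X_\ep$ to those of a complex Brownian sheet, and then prove tightness of the family $\{X_\ep\}_{\ep>0}$ in the space of continuous functions. The target complex Brownian sheet should be a process whose real and imaginary parts are independent (real) Brownian sheets, so the covariance structure to match is $\E[X_\ep(s,t)\overline{X_\ep(s',t')}] \to 2(s\wedge s')(t\wedge t')$ together with $\E[X_\ep(s,t)X_\ep(s',t')]\to 0$ (the latter forcing the pseudo-covariance to vanish, which is what makes the limit a genuinely complex, rotationally-invariant Gaussian field).

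Let me think about the mechanics. The key computational input is the Lévy exponent: by the Lévy–Khintchine representation one has $\E[e^{i\xi L(x,y)}] = e^{-xy\,\Psi(\xi)}$ with $\Psi(\xi)=a(\xi)+ib(\xi)$. This lets me compute $\E[\cos(\theta L(x,y))]$, $\E[\sin(\theta L(x,y))]$, and the mixed second moments of the integrand in \eqref{eq:1} in closed form as exponentials in $xy$ involving $a(\theta),b(\theta),a(2\theta),b(2\theta)$. The role of the normalizing constant $K$ in \eqref{eq:50} and of the hypothesis $a(\theta)a(2\theta)\ne 0$ is precisely to guarantee the relevant integrals converge and that the limiting variance comes out to the correct value: the factor $\sqrt{xy}$ is what produces, after the change of variables $u=\ep x$, $v=\ep y$ and the $\ep K$ prefactor, a finite nonzero covariance, while the oscillation of the trigonometric terms provides the decorrelation. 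First I would therefore set up these moment computations carefully, reducing everything to integrals of the schematic form $\ep^2\int\!\!\int \sqrt{xy}\sqrt{x'y'}\,\E[\cdots]\,dx\,dy\,dx'\,dy'$ over appropriate rectangles, and verify the three limiting identities above.

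For the finite-dimensional convergence I would not try to prove asymptotic Gaussianity by hand from scratch; instead the natural route is a characteristic-function or cumulant argument, or — given the product/independence structure of the Lévy sheet over disjoint rectangles — to exploit the independent-increment property to write $X_\ep$ over a grid as a sum of a large number of asymptotically independent, asymptotically negligible contributions and invoke a Lindeberg-type central limit theorem for the real and imaginary parts jointly. The decorrelation computed above then upgrades to asymptotic independence of the two components in the Gaussian limit. For tightness I would use the Kolmogorov-type criterion adapted to two-parameter continuous processes (controlling increments over rectangles), which requires a moment bound of the form $\E\big[|X_\ep(A)|^{p}\big]\le C\,|A|^{\,1+\delta}$ for increments over rectangles $A$, with $p>2$ and $\delta>0$; establishing such a bound uniformly in $\ep$ is the crux of the continuity argument.

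\textbf{The main obstacle} I anticipate is the tightness estimate rather than the finite-dimensional convergence. The delicate point is bounding higher moments of the rectangular increments of the oscillatory double integral \eqref{eq:1} uniformly in $\ep$: the $\sqrt{xy}$ weight grows in the region of integration (which expands like $1/\ep$), so the requisite integrability and the correct scaling in the area of the rectangle must be recovered entirely from the cancellation supplied by the complex exponential $e^{i\theta L}$ of the Lévy sheet. Controlling the fourth (or higher) moment will require expanding the product of four integrands, using the independent-increment structure of $L$ over disjoint subrectangles to factorize expectations, and carefully tracking how many factors fall in overlapping versus disjoint regions — this is where the hypothesis $a(\theta)a(2\theta)\ne 0$ and the precise form of $K$ will again be essential to close the bounds. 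I expect this to be the longest and most technical part of the argument, mirroring the corresponding estimates in \cite{BJ97} but now complicated by the general Lévy exponent and the complex-valued setting.
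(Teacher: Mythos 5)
Your tightness half coincides with the paper's: Proposition \ref{prop:2} proves exactly the moment bound you anticipate, $\sup_{\ep>0}\E\big[|\Delta_{s,t}X_\ep(s',t')|^4\big]\le C(s'-s)^2(t'-t)^2$, by expanding the fourth moment into an eightfold integral, factorizing the expectation of $e^{i\theta\sum_j(-1)^j L(x_j,y_j)}$ over disjoint rectangles, and running a combinatorial analysis over the orderings of the variables; the Bickel--Wichura criterion then gives tightness. The genuine gap is in your identification step. The summands in your proposed grid decomposition are \emph{not} asymptotically independent: the integrand in \eqref{eq:1} is $e^{i\theta L(x,y)}$, and $L(x,y)$ is the accumulated mass of the sheet over the whole rectangle $[0,x]\times[0,y]$, so the contribution of every grid cell carries a common phase factor built from the increments in all cells below and to its left. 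What the L\'evy structure actually yields is decorrelation --- $\big|\E\big[e^{i\theta(L(x,y)-L(x',y'))}\big]\big|$ decays exponentially in the area separating the points --- but decorrelation is not independence, and a Lindeberg-type CLT does not apply to such a dependent two-parameter array without substantial extra structure (a mixing theorem for random fields, or a martingale structure after conditional centering). This is precisely why the paper never proves finite-dimensional convergence at all: it identifies every weak limit point of the tight family through a martingale characterization of the complex Brownian sheet (Theorem \ref{thm:3}: real and imaginary parts are strong martingales with conditional squared increments $(s'-s)(t'-t)$ and vanishing conditional cross term, proved via L\'evy's one-parameter characterization along horizontal lines), and verifies these conditions for the limit points by conditional moment computations on $X_\ep$ (Lemmas \ref{lem:9}, \ref{lem:25} and \ref{lem:99}). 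Unless you replace ``Lindeberg CLT for asymptotically independent pieces'' by such a martingale argument, or by a full method of moments (all joint moments of arbitrary order, not just second and fourth --- note that the fourth moment alone already costs the paper its longest proof, with $576$ orderings), your fidi step does not close.

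A smaller inaccuracy: you expect the hypothesis $a(\theta)a(2\theta)\ne 0$ to be needed again to close the tightness bounds, but there only $a(\theta)>0$ is used --- in the fourth-moment expansion the increments carrying coefficients $\pm 2$ or $0$ enter through factors $e^{-\lambda(\tilde{Q}_l)a(c_2^l\theta)}\le 1$ and are simply discarded, since $a\ge 0$. The condition $a(2\theta)\ne 0$ enters exactly where you place it implicitly: in the vanishing of the pseudo-covariance, i.e.\ of terms of the form $\E\big[\big(\ep K\int\!\!\int\sqrt{xy}\,e^{i\theta L(x,y)}dxdy\big)^2\big]$, whose expectations produce $e^{i2\theta L}$ on overlapping rectangles and hence decay governed by $a(2\theta)$; this is what forces the independence of the real and imaginary parts in the limit, the analogue of your condition $\E[X_\ep X_\ep]\to 0$.
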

We point out that the processes $X_\ep$ provide a natural extension
of the family \eqref{eq:888}. Nevertheless, the proof of Theorem
\ref{thm:main} above involves new, and indeed significantly more,
technical difficulties compared to that of \cite[Thm. 1]{BJ97}. This
is obviously due to the presence of the L\'evy sheet $L$, which is a
rather general random field that includes, for instance, the
Brownian sheet and the isotropic stable sheet (see, e.g., \cite[Sec.
2.1]{KN}).

We recall that, by definition, a complex Brownian sheet is a complex
random field whose real and imaginary parts are independent Brownian
sheets. Hence, in view of the above theorem, we observe that the
real and imaginary parts of $X_\ep$ are clearly not independent, for
any $\ep>0$, while in the limit they are. This phenomenon is not
new, for it already appeared in the study of analogous problems in
the one-parameter setting (see, e.g., \cite{B,BR,Sang}). Indeed, in
\cite{B}, a family of processes that converges in law to a complex
Brownian motion was constructed from a unique Poisson process. This
result was generalized in \cite{BR}, where the Poisson process was
replaced by processes with independent increments whose
characteristic functions satisfy some properties. L\'evy processes
are one of the examples where the latter results may be applied.
Finally, the authors of \cite{Sang} use Poisson and L\'evy processes
in order to obtain approximations in law of a complex fractional
Brownian motion.

The main strategy in order to prove the kind of weak convergence
stated in Theorem \ref{thm:main}
consists in proving that the underlying family
of laws is relatively compact in the space of continuous functions (with the usual topology).
By Prohorov's theorem, this is
equivalent to proving the tightness property of this family of laws. Next, we will check that
 every weakly convergent partial sequence
converges to the limit law that we want to obtain.

In the last part of the paper (see Section \ref{sec:heat}), we consider the following semilinear
stochastic heat equation driven by the space-time white noise:
\begin{equation}\label{eq:128}
\frac{\partial U}{\partial t}(t,x)-\frac{\partial ^2 U}{\partial
x^2} (t,x)=b(U(t,x))+\dot W(t,x),\quad (t,x)\in [0,T]\times [0,1],
 \end{equation}
where $T>0$ and $b$ is a globally Lipschitz function. We impose some
initial datum and Dirichlet boundary conditions. In Theorem
\ref{thm:heat} below, we will prove that the random field solution
$U$ of \eqref{eq:128} can be approximated in law, in the space of
continuous functions, by a sequence of random fields
$\{U_\ep\}_{\ep}$, where $U_\ep$ is the mild solution to a
stochastic heat equation like \eqref{eq:128} but driven by either
the real or imaginary part of the noise $X_\ep$. This result
provides an example of a kind of weak continuity phenomenon in the
path space, where convergence in law of the noisy inputs implies
convergence in law of the corresponding solutions. Another example
of this fact was provided by Walsh in \cite{walsh-neural}, where a
parabolic stochastic partial differential equation was used to model
a discontinuous neurophysiological phenomenon.

The proof of Theorem \ref{thm:heat} will follow from \cite[Thm.
1.4]{BJQ}. More precisely, Theorem 1.4 of \cite{BJQ} establishes
sufficient conditions on a family of random fields that approximate
the Brownian sheet (in some sense) under which the solutions of
\eqref{eq:128} driven by this family converges in law, in the space
of continuous functions, to the random field $U$. We refer to
Section \ref{sec:heat} for the precise statement of the
above-mentioned conditions. In \cite{BJQ}, the authors apply their
main result to two important families of random fields that
approximate the Brownian sheet: the Donsker kernels in the plane and
the Kac-Stroock processes, where the latter are defined by
\[
 \theta_n(t,x):=n\sqrt{tx}\, (-1)^{N\left(\sqrt{n}\, t,\sqrt{n}\, x\right)},
\]
where $N$ denotes a standard Poisson process in the plane (indeed,
this case corresponds to \eqref{eq:888}). As it will be exhibited in
Section \ref{sec:heat}, the proof of Theorem \ref{thm:heat} is
strongly based on the treatment of the Kac-Stroock processes in
\cite{BJQ} (see Section 4 therein), and also on some technical
estimates contained in the proof of the tightness result given in
Proposition \ref{prop:2} of the present paper.

Eventually, we note that the kind of convergence results that are obtained in the present paper
assure that the
limit processes, which in our case correspond to the complex Brownian sheet and the solution to the
stochastic heat equation, are robust when used as models in
practical situations. Moreover, the obtained results provide expressions that can be
useful to study simulations of these limit processes.

The paper is organized as follows. Section \ref{sec:prel} contains
some preliminaries on two-parameter random fields and the definition
of L\'evy sheet. Section \ref{sec:tightness} is devoted to prove
that the family of laws of $(X_\ep)_{\ep>0}$ is tight in the space
of complex-valued continuous functions. The limit identification is
addressed in Section \ref{sec:iden}. Finally, the result on weak
convergence for the stochastic heat equation is obtained in Section
\ref{sec:heat}.


\section{Preliminaries}
\label{sec:prel}

Let $(\Omega, \F,\mathbb{P})$ be a complete probability space. We will use some notation introduced by Cairoli and Walsh in \cite{CW75}.
Namely, let $\{\F_{s,t};\, (s,t)\in[0,S]\times[0,T]\}$ be
a family of sub-$\sigma$-algebras of $\F$ satisfying:
\begin{itemize}
\item[(i)]$\F_{s,t}\subset\F_{s',t'}$, for all $s\le s'$ and $t\le t'$.
\item[(ii)] All zero sets of $\F$ are contained in $\F_{0,0}$ .
\item[(iii)] For any $z\in[0,S]\times[0,T]$, $\F_z=\cap_{z<z'}\F_{z'}$, where  $z=(s,t)<z'=(s',t')$ denotes the partial order in $[0,S]\times[0,T]$,
which means that $s<s'$ and $t<t'$.
\end{itemize}

If $(s,t)<(s',t')$ and $Y$ denotes any random field defined in $[0,S]\times[0,T]$, the increment of $Y$ on the rectangle
$[(s,t),(s',t')]$ is defined by
\[
 \Delta_{s,t} Y(s',t'):= Y(s',t')-Y(s,t')-Y(s',t)+Y(s,t).
\]
An adapted process $\{Y(s,t);\, (s,t)\in[0,S]\times[0,T]\}$ with respect to the filtration
$\{\F_{s,t};\, (s,t)\in[0,S]\times[0,T]\}$ is called a martingale if $\mathbb{E}[|Y(s,t)|]<\infty$ for all $(s,t)\in[0,S]\times[0,T]$  and
$$
\mathbb{E}[\Delta_{s,t}Y(s',t')|\F_{s,t}]=0, \quad \text{for all} \quad  (s,t)<(s',t').
$$
It will be called a strong martingale if $\mathbb{E}[|Y(s,t)|]<\infty$ for all
$(s,t)\in[0,S]\times[0,T]$, $Y(s,0)=Y(0,t)=0$ for all $s,t$  and
$$
\mathbb{E}[\Delta_{s,t}Y (s',t')|\F_{S,t}\vee\F_{s,T}]=0, \quad \text{for all} \quad (s,t)<(s',t').
$$

We recall that a Brownian sheet is an adapted process $\{W(s,t);
\,(s,t)\in[0,S]\times[0,T]\}$ such that $W(s,0)=W(0,t)=0$ $\P$-a.s.,
the increment $\Delta_{s,t}W(s',t')$ is independent of $\F_{S,t}
\vee \F_{s,T}$, for all $(s,t)<(s',t')$, and it is normally
distributed with mean zero and variance $(s'-s)(t'-t)$. If no
filtration is specified, we will consider the one generated by the
process itself, namely $\F^W:=\sigma\{W(s,t);\,
(s,t)\in[0,S]\times[0,T]\}$ (conveniently completed).

A Lévy sheet is defined as follows. In general, if $Q$ is any
rectangle in $\R_+^2$ and $Y$ any random field in $\R_+^2$, we will
also denote by $\Delta_Q Y$ the increment of $Y$ on $Q$. It is
well-known that, for any nonnegative definite function $\Psi$ in
$\R$, there exists a real-valued random field $L=\{L(s,t); \,
s,t\geq 0\}$ such that
\begin{itemize}
\item[(i)] For any family of disjoint rectangles $Q_1,\dots, Q_n$ in $\R_+^2$, the increments  $\Delta_{Q_1}L,\dots, \Delta_{Q_n}L$ are
independent random variables.
\item[(ii)] For any rectangle $Q$ in $\R_+^2$, the characteristic function of the increment $\Delta_Q L$ is given by
\begin{equation}
\mathbb{E}\left[e^{i \xi \Delta_{Q}L}\right]=
e^{-\lambda(Q)\Psi(\xi)}, \quad \xi\in \R, \label{eq:999}
\end{equation}
where $\lambda$ denotes the Lebesgue measure on $\R_+^2$.
\end{itemize}

\begin{definition}\label{def:1}
A random field  $L=\{L(s,t); \, s,t\geq 0\}$  taking values in $\R$ that is continuous in probability
and satisfies the above conditions (i) and (ii) is called a Lévy sheet with exponent $\Psi$.
\end{definition}

By the Lévy-Khintchine formula, we have
$$
\Psi(\xi) = i a \xi+ \frac{1}{2}\sigma^2  \xi^2 + \int_{\R} \left[ 1- e^{i\xi x} + \frac{i x \xi}{1+|x|^2} \right]\eta(dx), \quad \xi\in \R,
$$
where $a\in\mathbb{R}$, $\sigma\ge0$ and $\eta$ is the corresponding Lévy measure, that is a Borel measure on $\R\setminus \{0\}$ that
satisfies
\[
 \int_{\R} \frac{|x|^2}{1+|x|^2} \eta(dx)<\infty.
\]
We write $\Psi(\xi)=a(\xi) + ib(\xi)$, where
$$
a(\xi):=\frac{1}{2}\sigma^2\xi^2 + \int_{\mathbb{R}}[1-\cos(\xi x)]\eta(dx),
$$
and
$$
b(\xi):=a\xi + \int_{\mathbb{R}}\left[\frac{x \xi}{1+|x|^2} - \sin(\xi x) \right] \eta(dx).
$$
Observe that $a(\xi)\geq 0$ and, if $\xi\neq 0$, $a(\xi)>0$ whenever
$\sigma>0$ and/or $\eta$ is nontrivial.


\section{Tightness}
\label{sec:tightness}

This section is devoted to prove that the family of probability laws
of $\{X_\ep\}_{\ep>0}$ is tight in $\C([0,S]\times[0,T];\bC)$. This
will be a consequence of the next result and the tightness criterion
\cite[Thm. 3]{BW} (see also \cite{centsov}), taking in the account
that our processes $X_\ep$ vanish on the axes.

\begin{proposition}\label{prop:2}
 Let $\{X_\ep\}_{\ep>0}$ be the family of random fields defined by \eqref{eq:1}.
 There exists a positive constant $C$ such that, for all $(0,0)\leq (s,t)<(s',t')\leq (S,T)$,
$$
\sup_{\ep>0} \mathbb{E}\left[\left|\Delta_{s,t}X_\ep(s',t')\right|^4\right]\le C(s'-s)^2(t'-t)^2.
$$
This implies that the the family of probability laws of
$(X_\ep)_{\ep>0}$ is tight in $\C([0,S]\times[0,T];\bC)$.
\end{proposition}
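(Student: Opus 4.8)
The plan is to derive the tightness from the fourth-moment estimate (the displayed inequality), which combined with \cite[Thm.~3]{BW} and the fact that $X_\ep$ vanishes on the axes yields tightness in $\C([0,S]\times[0,T];\bC)$; so the whole work is to prove that estimate. First I would note that the rectangular increment of the double integral in \eqref{eq:1} is just the integral over the corresponding rectangle, so that
\[
Z:=\Delta_{s,t}X_\ep(s',t')=\ep K\int_{Q_\ep}\sqrt{xy}\,e^{i\theta L(x,y)}\,dx\,dy,\qquad Q_\ep:=\Big[\tfrac{s}{\ep},\tfrac{s'}{\ep}\Big]\times\Big[\tfrac{t}{\ep},\tfrac{t'}{\ep}\Big].
\]
Since $|Z|^4=Z^2\overline{Z}^{\,2}$, squaring, conjugating and applying Fubini gives
\[
\mathbb{E}\big[|Z|^4\big]=\mathbb{E}\big[Z^2\overline{Z}^{\,2}\big]=(\ep K)^4\int_{Q_\ep^4}\Big(\prod_{j=1}^4\sqrt{x_jy_j}\Big)\,\mathbb{E}\Big[e^{i\theta\left(L(x_1,y_1)+L(x_2,y_2)-L(x_3,y_3)-L(x_4,y_4)\right)}\Big]\,d\mathbf x\,d\mathbf y.
\]
All the difficulty is concentrated in estimating the characteristic-function factor and integrating it against the square-root weight; note that $K$ is a fixed finite constant, so its precise value \eqref{eq:50} only enters the final constant $C$.

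To handle the expectation for fixed points $P_j=(x_j,y_j)$, I would introduce the grid generated by the vertical lines through the ordered abscissas and the horizontal lines through the ordered ordinates, which partitions the relevant region into finitely many disjoint cells $C$. Since $L$ vanishes on the axes, each $L(x_j,y_j)$ equals the sum of the increments $\Delta_C L$ over the cells $C\subset[0,x_j]\times[0,y_j]$, and the exponent becomes $\theta\sum_C n_C\,\Delta_C L$, where $n_C\in\{-2,-1,0,1,2\}$ counts, with sign $+1$ for $j\in\{1,2\}$ and $-1$ for $j\in\{3,4\}$, how many points $P_j$ satisfy $C\subset[0,x_j]\times[0,y_j]$. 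By condition (i) of Definition~\ref{def:1} the increments over disjoint cells are independent, so by \eqref{eq:999} and the evenness of $a$ (whence $a(n_C\theta)=a(|n_C|\theta)$, $a(0)=0$),
\[
\mathbb{E}\Big[e^{i\theta\sum_{C}n_C\Delta_C L}\Big]=\prod_{C}e^{-\lambda(C)\Psi(n_C\theta)},\qquad\Big|\mathbb{E}\Big[e^{i\theta\sum_{C}n_C\Delta_C L}\Big]\Big|=\prod_{C}e^{-\lambda(C)\,a(|n_C|\theta)}.
\]
Bounding $\mathbb{E}[|Z|^4]$ by the integral of the modulus of its integrand (legitimate as the weight and $(\ep K)^4$ are nonnegative), only the values $a(\theta)$ and $a(2\theta)$ govern the decay; the hypothesis $a(\theta)a(2\theta)\neq0$ guarantees, since $a\ge0$, that both are strictly positive, so every cell carrying a nonzero coefficient contributes genuine exponential decay in its area.

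It then remains to estimate $(\ep K)^4\int_{Q_\ep^4}\prod_j\sqrt{x_jy_j}\,\prod_C e^{-\lambda(C)a(|n_C|\theta)}$. I would split $Q_\ep^4$ according to the relative order of $x_1,\dots,x_4$ and of $y_1,\dots,y_4$; on each such region the cell geometry and the coefficients $n_C$ are fixed, and the cell areas are products of consecutive coordinate gaps. On each region I would change variables to the smallest coordinate in each direction together with the successive gaps, bound the square-root weights by the corresponding coordinates, and integrate the gaps against the exponential decay. As already visible in the second-moment computation, the large factors produced by $\sqrt{x_jy_j}$ (of order $\ep^{-1}$ per variable) are exactly compensated by the $1/\lambda(C)$-type factors produced by integrating the decay, after which the anchor integrations over the rectangle and the prefactor $(\ep K)^4$ restore the correct order. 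In the dominant regions, where the two $+$ points pair off with the two $-$ points (as $\{1,3\},\{2,4\}$ or $\{1,4\},\{2,3\}$), each pair contributes only coefficient-$\pm1$ cells and yields a factor $(s'-s)(t'-t)$, for a total of $(s'-s)^2(t'-t)^2$; the remaining regions, in which coefficient-$\pm2$ cells appear, are controlled precisely through the decay coming from $a(2\theta)>0$.

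The main obstacle is exactly this last step: the bookkeeping of all orderings and all cell coefficients, and above all the verification that in every region the interplay between the square-root weights and the area decay produces a bound of order $(s'-s)^2(t'-t)^2$ \emph{uniformly in $\ep$}. Particular care is needed in the degenerate cases $s=0$ or $t=0$, where the smallest coordinate in a direction may vanish and the decay weakens; there one must check that the vanishing of the corresponding $\sqrt{\,\cdot\,}$ weight compensates, so that the estimate survives. Summing the finitely many ordering regions then gives $\sup_{\ep>0}\mathbb{E}[|Z|^4]\le C(s'-s)^2(t'-t)^2$ with $C$ depending only on $\theta$, on $a(\theta),a(2\theta)$ and on $K$, which by \cite[Thm.~3]{BW} completes the proof of tightness.
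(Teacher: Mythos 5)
Your setup coincides, step for step, with the first half of the paper's proof: the same eight-fold integral for the fourth moment, the same grid-cell decomposition of the exponent into independent rectangular increments with coefficients $n_C\in\{-2,-1,0,1,2\}$, and the same use of \eqref{eq:999} to reduce the modulus of the characteristic function to $\prod_C e^{-\lambda(C)\,a(|n_C|\theta)}$. But the actual content of Proposition \ref{prop:2} is what comes next — the verification, uniformly in $\ep$, that integrating this decay against $\ep^4\prod_j\sqrt{x_jy_j}$ over each of the finitely many ordering regions produces $C(s'-s)^2(t'-t)^2$ — and this is exactly what you set aside as ``the main obstacle.'' It is not routine bookkeeping. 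In the paper, after discarding \emph{all} coefficient-$\pm2$ and coefficient-$0$ factors (bounded by $1$, so that only $e^{-a(\theta)\lambda(\bar{Q})}$ with $\bar{Q}$ the union of coefficient-$\pm1$ cells survives), the $24$ configurations are reduced to $4$ by area monotonicity and $x$--$y$ symmetry; the first case, where every surviving cell area is anchored in the form (gap)$\times$(coordinate), is integrated directly, but the other three are \emph{not} directly integrable, because $\lambda(\bar{Q})$ then contains an unanchored product of gaps such as $(x_3-x_2)(y_2-y_1)$. The paper needs a genuinely new idea there: split the domain into $\{A\geq 2(x_2-x_1)(y_2-y_1)\}$ and its complement, use $-a(\theta)A\leq -\tfrac{a(\theta)}{2}A-a(\theta)(x_2-x_1)(y_2-y_1)$ on the first piece to re-anchor the exponent, and on the second piece deduce pointwise inequalities like $\tfrac14 y_3<y_2-y_1$ that bound $A$ from below by an anchored expression. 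Nothing in your plan supplies this step, and without it the estimate in the hard configurations is simply asserted.

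Moreover, the specific mechanism you propose for the non-dominant configurations is off the mark in two respects. First, the tightness estimate in the paper never uses $a(2\theta)$: the coefficient-$\pm2$ cells are thrown away, and only $a(\theta)>0$ enters (the hypothesis $a(2\theta)\neq 0$ is needed later, in the limit identification, to control the term $\tilde{\Lambda}_\ep$). Relying on decay from the $\pm2$ cells is not known to suffice: in the bad configurations those cells need not have the anchored geometry your gap-integration scheme requires, and there are also coefficient-$0$ cells contributing no decay at all, so ``every nonzero cell decays'' does not by itself yield the bound. Second, your heuristic that the $\ep^{-1}$-per-variable losses from $\sqrt{x_jy_j}$ are ``exactly compensated'' by the $1/\lambda(C)$-type factors is precisely the claim to be proved; it holds verbatim only in the anchored case (the paper's case i), where one obtains $\big[\sqrt{s'}(\sqrt{s'}-\sqrt{s})\big]^2\big[\sqrt{t'}(\sqrt{t'}-\sqrt{t})\big]^2\leq (s'-s)^2(t'-t)^2$, a bound that also covers $s=0$ or $t=0$ with no extra care), and it fails before the domain-splitting rearrangement in the remaining cases. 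So as written the proposal reproduces the easy half of the paper's argument and defers the half that constitutes the proposition.
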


\begin{proof} By definition of $X_\ep$ and the properties of the modulus $|\cdot|$, we have
\begin{align*}
& \mathbb{E}\left[\left|\Delta_{s,t}X_\ep(s',t')\right|^4\right] \\
& \; =\ep^4 K^4 \mathbb{E} \left[ \left|\int_{\frac{t}{\ep}}^{\frac{t'}{\ep}}
\int_{\frac{s}{\ep}}^{\frac{s'}{\ep}}\sqrt{xy} \, \{\cos(\theta L(x,y))+i\sin(\theta L(x,y))\}dxdy \right|^4\right]\\
&\; =\ep^4 K^4\mathbb{E}\left[ \left|\int_{\frac{t}{\ep}}^{\frac{t'}{\ep}}
\int_{\frac{s}{\ep}}^{\frac{s'}{\ep}}\sqrt{xy}\,  e^{i\theta L(x,y)} dxdy\right|^4\right]\\
&\; =\ep^4K^4\mathbb{E}\left[\left(
\int_{\frac{t}{\ep}}^{\frac{t'}{\ep}}\int_{\frac{s}{\ep}}^{\frac{s'}{\ep}}\sqrt{x_1y_1}\,
e^{i\theta L(x_1,y_1)} dx_1dy_1
\int_{\frac{t}{\ep}}^{\frac{t'}{\ep}}
\int_{\frac{s}{\ep}}^{\frac{s'}{\ep}}\sqrt{x_2y_2}\, e^{-i\theta
L(x_2,y_2)}dx_2dy_2
\right)^2\right]\\
&\; =\ep^4 K^4 \mathbb{E}\left[\left(\int_{\frac{t}{\ep}}^{\frac{t'}{\ep}}
\int_{\frac{s}{\ep}}^{\frac{s'}{\ep}}\int_{\frac{t}{\ep}}^{\frac{t'}{\ep}}
\int_{\frac{s}{\ep}}^{\frac{s'}{\ep}}
\sqrt{x_1x_2y_1y_2}\, e^{i\theta(L(x_2,y_2)-L(x_1,y_1))} dx_1dx_2dy_1dy_2\right)^2\right]\\
&\; =\ep^4 K^4 \mathbb{E} \left[\int_{\frac{t}{\ep}}^{\frac{t'}{\ep}}
\int_{\frac{s}{\ep}}^{\frac{s'}{\ep}} \dots \int_{\frac{t}{\ep}}^{\frac{t'}{\ep}}
\int_{\frac{s}{\ep}}^{\frac{s'}{\ep}}
\sqrt{x_1x_2x_3x_4}\sqrt{y_1y_2y_3y_4}\right.\\
&\qquad \qquad  \quad \times e^{i\theta(
L(x_4,y_4)-L(x_3,y_3)+L(x_2,y_2)-L(x_1,y_1))} dx_1\dots
dx_4dy_1\dots dy_4\Bigg].
\end{align*}
Taking into account that we can write $e^{i\theta \sum_{j=1}^4(-1)^j
L(x_j,y_j)}= e^{i\theta\sum_{j=1}^4(-1)^j\Delta_{0,0}L(x_j,y_j)}$,
we obtain
\begin{align*}
\mathbb{E}\left[\left|\Delta_{s,t}X_\ep(s',t')\right|^4\right] &
=\ep^4 K^4 \int_{\frac{t}{\ep}}^{\frac{t'}{\ep}} \int_{\frac{s}{\ep}}^{\frac{s'}{\ep}}\dots
\int_{\frac{t}{\ep}}^{\frac{t'}{\ep}} \int_{\frac{s}{\ep}}^{\frac{s'}{\ep}}
\sqrt{x_1x_2x_3x_4}\sqrt{y_1y_2y_3y_4}\\
& \qquad \times \mathbb{E}\left[e^{i\theta
\sum_{j=1}^4(-1)^j\Delta_{0,0}L(x_j,y_j)}\right] dx_1\dots dx_4
dy_1\dots dy_4.
\end{align*}
In order to estimate the expectation inside the above term, we need to consider all
24 possible orders of the $x$-variables and all 24 possible orders of the $y$-variables.
Altogether, this amounts to take into account 576 possibilities. Let $\mathcal{P}_4$ be the
group of permutations of degree 4. Then,
\begin{align}
& \mathbb{E}\left[\left|\Delta_{s,t}X_\ep(s',t')\right|^4\right] \nonumber \\
& \;= \sum_{\sigma\in \mathcal{P}_4}\sum_{\beta\in
\mathcal{P}_4}\varepsilon^4K^4\int_{\frac{t}{\varepsilon}}^{\frac{t'}{\varepsilon}}\int_{\frac{s}{\varepsilon}}^{\frac{s'}{\varepsilon}}\cdots
\int_{\frac{t}{\varepsilon}}^{\frac{t'}{\varepsilon}}\int_{\frac{s}{\varepsilon}}^{\frac{s'}{\varepsilon}}
\sqrt{x_1x_2x_3x_4}\sqrt{y_1y_2y_3y_4} \, \E\left[ e^{i\theta\sum_
{j=1}^4(-1)^j\Delta_{0,0}L(x_j,y_j)} \right]\nonumber \\
& \qquad \times I_{\{x_{\sigma(1)}<x_{\sigma(2)}<x_{\sigma(3)}<x_{\sigma(4)}\}}
I_{\{y_{\beta(1)}<y_{\beta(2)}<y_{\beta(3)}<y_{\beta(4)}\}}dx_1\dots
dx_4dy_1\dots dy_4\nonumber \\
& \; \leq \sum_{\sigma\in \mathcal{P}_4}\sum_{\beta\in
\mathcal{P}_4}\varepsilon^4K^4\int_{\frac{t}{\varepsilon}}^{\frac{t'}{\varepsilon}}\int_{\frac{s}{\varepsilon}}^{\frac{s'}{\varepsilon}}\cdots
\int_{\frac{t}{\varepsilon}}^{\frac{t'}{\varepsilon}}\int_{\frac{s}{\varepsilon}}^{\frac{s'}{\varepsilon}}
\sqrt{x_1x_2x_3x_4}\sqrt{y_1y_2y_3y_4} \left|\E\left[
e^{i\theta\sum_
{j=1}^4(-1)^j\Delta_{0,0}L(x_j,y_j)} \right]\right|\nonumber \\
& \qquad \times
I_{\{x_{\sigma(1)}<x_{\sigma(2)}<x_{\sigma(3)}<x_{\sigma(4)}\}}
I_{\{y_{\beta(1)}<y_{\beta(2)}<y_{\beta(3)}<y_{\beta(4)}\}}dx_1\dots
dx_4dy_1\dots dy_4.
\label{eq:4}
\end{align}
At this point, we observe that the geometric structure of the resulting increments of $L$ in the
expression $\sum_{j=1}^4(-1)^j\Delta_{0,0}L(x_j,y_j)$ of any of the 576 possibilities
corresponds to one of the 24 cases drawn in Figure \ref{fig:1}; we note that
the latter corresponds to all 24 possible orders of the $x$-variables with
$y_1<y_2<y_3<y_4$. In each of these 24 possible structures, the corresponding increments of $L$
turn out to be multiplied by $c_1\in\{-1,1\}$ in the black regions, while they
are multiplied by $c_2\in\{-2,0,2\}$ in the white regions.

Let us now fix two permutations $\sigma,\beta\in
\mathcal{P}_4$, and we will focus on the term
\begin{equation}
\left|\E\left[ e^{i\theta\sum_{j=1}^4(-1)^j\Delta_{0,0}L(x_j,y_j)}
\right]\right| \,
I_{\{x_{\sigma(1)}<x_{\sigma(2)}<x_{\sigma(3)}<x_{\sigma(4)}\}}
I_{\{y_{\beta(1)}<y_{\beta(2)}<y_{\beta(3)}<y_{\beta(4)}\}}.
\label{eq:2}
\end{equation}
We perform a change of variables in such a way that, making a harmless abuse of notation and
using again the same one for the new variables, we have
$x_1<x_2<x_3<x_4$ and $y_1<y_2<y_3<y_4$.

On the other hand, if we denote by $Q$ the region of Figure \ref{fig:1} corresponding the above fixed variables order,
we know that $Q$ can be decomposed as a union of black rectangles and white
rectangles. More precisely, we can write
$$Q=\left( \cup_k \, \bar{Q}_k \right) \cup  \big( \cup_l\tilde{Q}_l\big),$$
where the increments $\Delta_{\bar{Q}_k} L$ are multiplied by
$c_1^k\in\{-1,1\}$ and $\Delta_{\tilde{Q}_l} L$ are multiplied by
$c_2^l\in\{-2,0,2\}$. Hence, expression \eqref{eq:2} is given by

\begin{eqnarray}
&& \left|\E\left[ e^{i\theta(\sum_k c_1^k\Delta_{\bar{Q}_k}L+\sum_l
c_2^l\Delta_{\tilde{Q}_l}L)} \right]\right|
I_{\{x_{1}<x_{2}<x_{3}<x_{4}\}}
I_{\{y_{1}<y_{2}<y_{3}<y_{4}\}} \nonumber \\
&=&\left| \E \left[ e^{i\theta \sum_k c_1^k\Delta_{\bar{Q}_k}L}
\right] \E\left[ e^{i\theta\sum_l c_2^l\Delta_{\tilde{Q}_l}L}
\right]\right|
I_{\{x_{1}<x_{2}<x_{3}<x_{4}\}} I_{\{y_{1}<y_{2}<y_{3}<y_{4}\}}\nonumber \\
&=& \left| \E \left[ e^{i\theta \sum_k c_1^k\Delta_{\bar{Q}_k}L}
\right]\right| \times \left|\E\left[ e^{i\theta\sum_l
c_2^l\Delta_{\tilde{Q}_l}L} \right]\right|
I_{\{x_{1}<x_{2}<x_{3}<x_{4}\}} I_{\{y_{1}<y_{2}<y_{3}<y_{4}\}}\nonumber\\
&=&\prod_k \left|e^{-\lambda(\bar{Q}_k)\Psi(c_1^k\theta)}\right|
\prod_l \left|e^{-\lambda(\tilde{Q}_l)\Psi(c_2^l\theta)}\right|
I_{\{x_{1}<x_{2}<x_{3}<x_{4}\}} I_{\{y_{1}<y_{2}<y_{3}<y_{4}\}}\nonumber\\
&=&\prod_k e^{-\lambda(\bar{Q}_k)a(c_1^k\theta)}\prod_l
e^{-\lambda(\tilde{Q}_l)a(c_2^l\theta)}
I_{\{x_{1}<x_{2}<x_{3}<x_{4}\}} I_{\{y_{1}<y_{2}<y_{3}<y_{4}\}}\nonumber\\
&\leq&\prod_k e^{-\lambda(\bar{Q}_k)a(\theta)}
I_{\{x_{1}<x_{2}<x_{3}<x_{4}\}} I_{\{y_{1}<y_{2}<y_{3}<y_{4}\}}\nonumber\\
&=& e^{-\lambda(\bar{Q})a(\theta)} I_{\{x_{1}<x_{2}<x_{3}<x_{4}\}}
I_{\{y_{1}<y_{2}<y_{3}<y_{4}\}}, \label{eq:3}
\end{eqnarray}
where $\bar{Q}:=\cup_k \bar{Q}_k$. In the above computations, we
have used that the real part of the Lévy exponent $\Psi$ is a
nonnegative function and satisfies $a(-\theta)=a(\theta)$. We remark
that, independently of the constants $c_1^k$ and $c_2^l$, we have
obtained an estimated of \eqref{eq:2} which only involves the black
regions multiplied by 1. Recall that $\lambda$ denotes the Lebesgue
measure on $\R^2$.


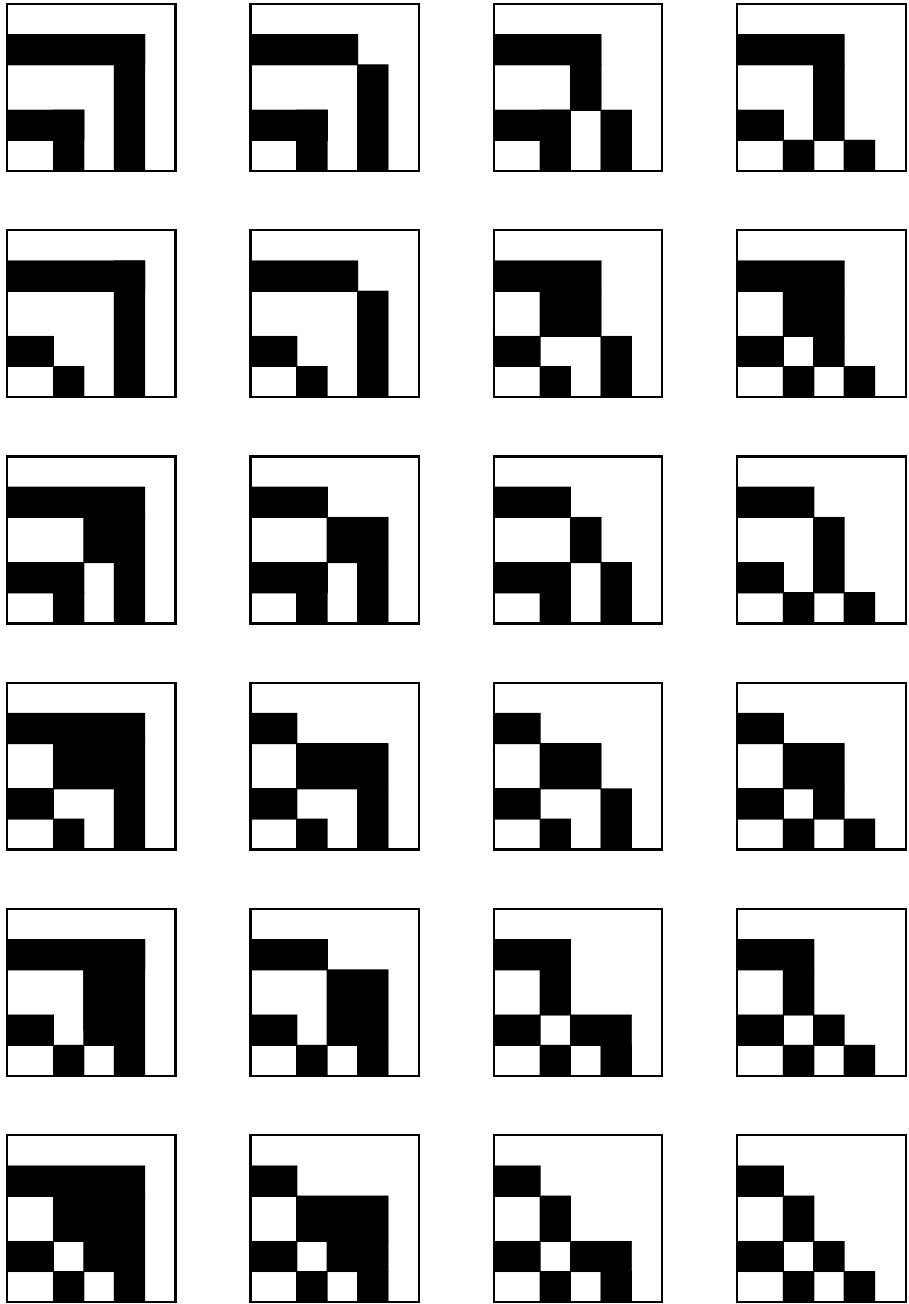
\begin{figure}
\begin{center}
\unitlength = 1mm
\begin{picture}(126,180)
\thinlines
\drawframebox{15.0}{165.0}{22.0}{22.0}{}
\drawframebox{47.0}{165.0}{22.0}{22.0}{}
\drawframebox{79.0}{165.0}{22.0}{22.0}{}
\drawframebox{111.0}{165.0}{22.0}{22.0}{}
\drawframebox{15.0}{135.0}{22.0}{22.0}{}
\drawframebox{47.0}{135.0}{22.0}{22.0}{}
\drawframebox{79.0}{135.0}{22.0}{22.0}{}
\drawframebox{111.0}{135.0}{22.0}{22.0}{}
\drawframebox{15.0}{105.0}{22.0}{22.0}{}
\drawframebox{47.0}{105.0}{22.0}{22.0}{}
\drawframebox{79.0}{105.0}{22.0}{22.0}{}
\drawframebox{111.0}{105.0}{22.0}{22.0}{}
\drawframebox{15.0}{75.0}{22.0}{22.0}{}
\drawframebox{47.0}{75.0}{22.0}{22.0}{}
\drawframebox{79.0}{75.0}{22.0}{22.0}{}
\drawframebox{111.0}{75.0}{22.0}{22.0}{}
\drawframebox{15.0}{45.0}{22.0}{22.0}{}
\drawframebox{47.0}{45.0}{22.0}{22.0}{}
\drawframebox{79.0}{45.0}{22.0}{22.0}{}
\drawframebox{111.0}{45.0}{22.0}{22.0}{}
\drawframebox{15.0}{15.0}{22.0}{22.0}{}
\drawframebox{47.0}{15.0}{22.0}{22.0}{}
\drawframebox{79.0}{15.0}{22.0}{22.0}{}
\drawframebox{111.0}{15.0}{22.0}{22.0}{}
\drawshadebox{4.0}{13.0}{22.0}{168.0}{170.0}{172.0}{}{1.0}
\drawshadebox{18.0}{20.0}{22.0}{154.0}{163.0}{172.0}{}{1.0}
\drawshadebox{4.0}{9.0}{14.0}{158.0}{160.0}{162.0}{}{1.0}
\drawshadebox{10.0}{12.0}{14.0}{154.0}{158.0}{162.0}{}{1.0}
\drawshadebox{36.0}{43.0}{50.0}{168.0}{170.0}{172.0}{}{1.0}
\drawshadebox{50.0}{52.0}{54.0}{154.0}{161.0}{168.0}{}{1.0}
\drawshadebox{36.0}{41.0}{46.0}{158.0}{160.0}{162.0}{}{1.0}
\drawshadebox{42.0}{44.0}{46.0}{154.0}{158.0}{162.0}{}{1.0}
\drawshadebox{68.0}{75.0}{82.0}{168.0}{170.0}{172.0}{}{1.0}
\drawshadebox{68.0}{73.0}{78.0}{158.0}{160.0}{162.0}{}{1.0}
\drawshadebox{74.0}{76.0}{78.0}{154.0}{158.0}{162.0}{}{1.0}
\drawshadebox{82.0}{84.0}{86.0}{154.0}{158.0}{162.0}{}{1.0}
\drawshadebox{78.0}{80.0}{82.0}{162.0}{165.0}{168.0}{}{1.0}
\drawshadebox{100.0}{107.0}{114.0}{168.0}{170.0}{172.0}{}{1.0}
\drawshadebox{100.0}{103.0}{106.0}{158.0}{160.0}{162.0}{}{1.0}
\drawshadebox{106.0}{108.0}{110.0}{154.0}{156.0}{158.0}{}{1.0}
\drawshadebox{114.0}{116.0}{118.0}{154.0}{156.0}{158.0}{}{1.0}
\drawshadebox{110.0}{112.0}{114.0}{158.0}{163.0}{168.0}{}{1.0}
\drawshadebox{4.0}{13.0}{22.0}{138.0}{140.0}{142.0}{}{1.0}
\drawshadebox{18.0}{20.0}{22.0}{124.0}{133.0}{142.0}{}{1.0}
\drawshadebox{4.0}{7.0}{10.0}{128.0}{130.0}{132.0}{}{1.0}
\drawshadebox{10.0}{12.0}{14.0}{124.0}{126.0}{128.0}{}{1.0}
\drawshadebox{36.0}{43.0}{50.0}{138.0}{140.0}{142.0}{}{1.0}
\drawshadebox{50.0}{52.0}{54.0}{124.0}{131.0}{138.0}{}{1.0}
\drawshadebox{36.0}{39.0}{42.0}{128.0}{130.0}{132.0}{}{1.0}
\drawshadebox{42.0}{44.0}{46.0}{124.0}{126.0}{128.0}{}{1.0}
\drawshadebox{68.0}{75.0}{82.0}{138.0}{140.0}{142.0}{}{1.0}
\drawshadebox{74.0}{78.0}{82.0}{132.0}{135.0}{138.0}{}{1.0}
\drawshadebox{68.0}{71.0}{74.0}{128.0}{130.0}{132.0}{}{1.0}
\drawshadebox{74.0}{76.0}{78.0}{124.0}{126.0}{128.0}{}{1.0}
\drawshadebox{82.0}{84.0}{86.0}{124.0}{128.0}{132.0}{}{1.0}
\drawshadebox{100.0}{107.0}{114.0}{138.0}{140.0}{142.0}{}{1.0}
\drawshadebox{106.0}{110.0}{114.0}{132.0}{135.0}{138.0}{}{1.0}
\drawshadebox{110.0}{112.0}{114.0}{128.0}{130.0}{132.0}{}{1.0}
\drawshadebox{100.0}{103.0}{106.0}{128.0}{130.0}{132.0}{}{1.0}
\drawshadebox{106.0}{108.0}{110.0}{124.0}{126.0}{128.0}{}{1.0}
\drawshadebox{114.0}{116.0}{118.0}{124.0}{126.0}{128.0}{}{1.0}
\drawshadebox{4.0}{13.0}{22.0}{108.0}{110.0}{112.0}{}{1.0}
\drawshadebox{18.0}{20.0}{22.0}{94.0}{101.0}{108.0}{}{1.0}
\drawshadebox{4.0}{9.0}{14.0}{98.0}{100.0}{102.0}{}{1.0}
\drawshadebox{10.0}{12.0}{14.0}{94.0}{96.0}{98.0}{}{1.0}
\drawshadebox{14.0}{16.0}{18.0}{102.0}{105.0}{108.0}{}{1.0}
\drawshadebox{36.0}{41.0}{46.0}{108.0}{110.0}{112.0}{}{1.0}
\drawshadebox{36.0}{41.0}{46.0}{98.0}{100.0}{102.0}{}{1.0}
\drawshadebox{42.0}{44.0}{46.0}{94.0}{96.0}{98.0}{}{1.0}
\drawshadebox{46.0}{50.0}{54.0}{102.0}{105.0}{108.0}{}{1.0}
\drawshadebox{50.0}{52.0}{54.0}{94.0}{98.0}{102.0}{}{1.0}
\drawshadebox{68.0}{73.0}{78.0}{108.0}{110.0}{112.0}{}{1.0}
\drawshadebox{68.0}{73.0}{78.0}{98.0}{100.0}{102.0}{}{1.0}
\drawshadebox{74.0}{76.0}{78.0}{94.0}{96.0}{98.0}{}{1.0}
\drawshadebox{78.0}{80.0}{82.0}{102.0}{105.0}{108.0}{}{1.0}
\drawshadebox{82.0}{84.0}{86.0}{94.0}{98.0}{102.0}{}{1.0}
\drawshadebox{100.0}{105.0}{110.0}{108.0}{110.0}{112.0}{}{1.0}
\drawshadebox{110.0}{112.0}{114.0}{98.0}{103.0}{108.0}{}{1.0}
\drawshadebox{114.0}{116.0}{118.0}{94.0}{96.0}{98.0}{}{1.0}
\drawshadebox{100.0}{103.0}{106.0}{98.0}{100.0}{102.0}{}{1.0}
\drawshadebox{106.0}{108.0}{110.0}{94.0}{96.0}{98.0}{}{1.0}
\drawshadebox{4.0}{13.0}{22.0}{78.0}{80.0}{82.0}{}{1.0}
\drawshadebox{18.0}{20.0}{22.0}{64.0}{71.0}{78.0}{}{1.0}
\drawshadebox{10.0}{14.0}{18.0}{72.0}{75.0}{78.0}{}{1.0}
\drawshadebox{4.0}{7.0}{10.0}{68.0}{70.0}{72.0}{}{1.0}
\drawshadebox{10.0}{12.0}{14.0}{64.0}{66.0}{68.0}{}{1.0}
\drawshadebox{36.0}{39.0}{42.0}{78.0}{80.0}{82.0}{}{1.0}
\drawshadebox{42.0}{48.0}{54.0}{72.0}{75.0}{78.0}{}{1.0}
\drawshadebox{36.0}{39.0}{42.0}{68.0}{70.0}{72.0}{}{1.0}
\drawshadebox{42.0}{44.0}{46.0}{64.0}{66.0}{68.0}{}{1.0}
\drawshadebox{50.0}{52.0}{54.0}{64.0}{68.0}{72.0}{}{1.0}
\drawshadebox{68.0}{71.0}{74.0}{78.0}{80.0}{82.0}{}{1.0}
\drawshadebox{74.0}{78.0}{82.0}{72.0}{75.0}{78.0}{}{1.0}
\drawshadebox{68.0}{71.0}{74.0}{68.0}{70.0}{72.0}{}{1.0}
\drawshadebox{74.0}{76.0}{78.0}{64.0}{66.0}{68.0}{}{1.0}
\drawshadebox{82.0}{84.0}{86.0}{64.0}{68.0}{72.0}{}{1.0}
\drawshadebox{100.0}{103.0}{106.0}{78.0}{80.0}{82.0}{}{1.0}
\drawshadebox{106.0}{110.0}{114.0}{72.0}{75.0}{78.0}{}{1.0}
\drawshadebox{110.0}{112.0}{114.0}{68.0}{70.0}{72.0}{}{1.0}
\drawshadebox{114.0}{116.0}{118.0}{64.0}{66.0}{68.0}{}{1.0}
\drawshadebox{100.0}{103.0}{106.0}{68.0}{70.0}{72.0}{}{1.0}
\drawshadebox{106.0}{108.0}{110.0}{64.0}{66.0}{68.0}{}{1.0}
\drawshadebox{4.0}{13.0}{22.0}{48.0}{50.0}{52.0}{}{1.0}
\drawshadebox{18.0}{20.0}{22.0}{34.0}{43.0}{52.0}{}{1.0}
\drawshadebox{14.0}{16.0}{18.0}{40.0}{44.0}{48.0}{}{1.0}
\drawshadebox{14.0}{16.0}{18.0}{38.0}{43.0}{48.0}{}{1.0}
\drawshadebox{4.0}{7.0}{10.0}{38.0}{40.0}{42.0}{}{1.0}
\drawshadebox{10.0}{12.0}{14.0}{34.0}{36.0}{38.0}{}{1.0}
\drawshadebox{36.0}{41.0}{46.0}{48.0}{50.0}{52.0}{}{1.0}
\drawshadebox{46.0}{50.0}{54.0}{38.0}{43.0}{48.0}{}{1.0}
\drawshadebox{50.0}{52.0}{54.0}{34.0}{36.0}{38.0}{}{1.0}
\drawshadebox{36.0}{39.0}{42.0}{38.0}{40.0}{42.0}{}{1.0}
\drawshadebox{42.0}{44.0}{46.0}{34.0}{36.0}{38.0}{}{1.0}
\drawshadebox{68.0}{73.0}{78.0}{48.0}{50.0}{52.0}{}{1.0}
\drawshadebox{74.0}{76.0}{78.0}{42.0}{45.0}{48.0}{}{1.0}
\drawshadebox{68.0}{71.0}{74.0}{38.0}{40.0}{42.0}{}{1.0}
\drawshadebox{78.0}{82.0}{86.0}{38.0}{40.0}{42.0}{}{1.0}
\drawshadebox{82.0}{84.0}{86.0}{34.0}{36.0}{38.0}{}{1.0}
\drawshadebox{74.0}{76.0}{78.0}{34.0}{36.0}{38.0}{}{1.0}
\drawshadebox{100.0}{105.0}{110.0}{48.0}{50.0}{52.0}{}{1.0}
\drawshadebox{106.0}{108.0}{110.0}{42.0}{45.0}{48.0}{}{1.0}
\drawshadebox{100.0}{103.0}{106.0}{38.0}{40.0}{42.0}{}{1.0}
\drawshadebox{110.0}{112.0}{114.0}{38.0}{40.0}{42.0}{}{1.0}
\drawshadebox{106.0}{108.0}{110.0}{34.0}{36.0}{38.0}{}{1.0}
\drawshadebox{114.0}{116.0}{118.0}{34.0}{36.0}{38.0}{}{1.0}
\drawshadebox{4.0}{13.0}{22.0}{18.0}{20.0}{22.0}{}{1.0}
\drawshadebox{18.0}{20.0}{22.0}{4.0}{11.0}{18.0}{}{1.0}
\drawshadebox{10.0}{14.0}{18.0}{12.0}{15.0}{18.0}{}{1.0}
\drawshadebox{14.0}{16.0}{18.0}{8.0}{10.0}{12.0}{}{1.0}
\drawshadebox{4.0}{7.0}{10.0}{8.0}{10.0}{12.0}{}{1.0}
\drawshadebox{10.0}{12.0}{14.0}{4.0}{6.0}{8.0}{}{1.0}
\drawshadebox{36.0}{39.0}{42.0}{18.0}{20.0}{22.0}{}{1.0}
\drawshadebox{42.0}{48.0}{54.0}{12.0}{15.0}{18.0}{}{1.0}
\drawshadebox{46.0}{50.0}{54.0}{8.0}{10.0}{12.0}{}{1.0}
\drawshadebox{50.0}{52.0}{54.0}{4.0}{6.0}{8.0}{}{1.0}
\drawshadebox{36.0}{39.0}{42.0}{8.0}{10.0}{12.0}{}{1.0}
\drawshadebox{42.0}{44.0}{46.0}{4.0}{6.0}{8.0}{}{1.0}
\drawshadebox{68.0}{71.0}{74.0}{18.0}{20.0}{22.0}{}{1.0}
\drawshadebox{74.0}{76.0}{78.0}{12.0}{15.0}{18.0}{}{1.0}
\drawshadebox{78.0}{82.0}{86.0}{8.0}{10.0}{12.0}{}{1.0}
\drawshadebox{82.0}{84.0}{86.0}{4.0}{6.0}{8.0}{}{1.0}
\drawshadebox{68.0}{71.0}{74.0}{8.0}{10.0}{12.0}{}{1.0}
\drawshadebox{74.0}{76.0}{78.0}{4.0}{6.0}{8.0}{}{1.0}
\drawshadebox{100.0}{103.0}{106.0}{18.0}{20.0}{22.0}{}{1.0}
\drawshadebox{106.0}{108.0}{110.0}{12.0}{15.0}{18.0}{}{1.0}
\drawshadebox{110.0}{112.0}{114.0}{8.0}{10.0}{12.0}{}{1.0}
\drawshadebox{114.0}{116.0}{118.0}{4.0}{6.0}{8.0}{}{1.0}
\drawshadebox{100.0}{103.0}{106.0}{8.0}{10.0}{12.0}{}{1.0}
\drawshadebox{106.0}{108.0}{110.0}{4.0}{6.0}{8.0}{}{1.0}
\end{picture}
\caption{Each square represents the rectangle $[(s,t),(s',t')]$. 
Regions corresponding to $\sum_{j=1}^4(-1)^j\Delta_{0,0}L(x_j,y_j)$,
for all possible 24 orders of the $x$-variables and $y_1<y_2<y_3<y_4$, are drawn in each square. 
Black areas are regions where the corresponding increment of $L$ appears 
an odd number of times. Note that, indeed, all areas are extended up to the plane axes.
}
\label{fig:1}
\end{center}
\end{figure}

%

Taking into account estimate \eqref{eq:3}, it is readily checked
that, among all 24 possibilities drawn in Figure \ref{fig:1}, it
suffices to deal with 4 of these cases (see Figure \ref{fig:2}).
This is because, in the rest of the cases, the area of $\bar{Q}$ is
greater than or equal to the corresponding one of one of these 4
possibilities. We point out that, in some cases, one needs to apply
a symmetry argument. For instance, the analysis of the figure in the
fourth row and third column of Figure \ref{fig:1}, by symmetry
between the $x$ and $y$ coordinates, is equivalent to that of the
figure in the third row and fourth column. Thus, since in
\eqref{eq:3} the area of $\bar{Q}$ appears with a negative sign, we
can focus only on the cases of Figure \ref{fig:2}.

\begin{figure}
\begin{center}
\unitlength = 0.9mm
\begin{picture}(92,52)
\thinlines
\drawpath{4.0}{8.0}{44.0}{8.0}
\drawpath{48.0}{8.0}{88.0}{8.0}
\drawpath{8.0}{4.0}{8.0}{44.0}
\drawpath{52.0}{4.0}{52.0}{44.0}
\drawframebox{32.0}{32.0}{20.0}{20.0}{}
\drawframebox{76.0}{32.0}{20.0}{20.0}{}
\drawshadebox{8.0}{21.0}{34.0}{34.0}{36.0}{38.0}{}{1.0}
\drawshadebox{34.0}{36.0}{38.0}{8.0}{21.0}{34.0}{}{1.0}
\drawshadebox{8.0}{17.0}{26.0}{26.0}{28.0}{30.0}{}{1.0}
\drawshadebox{26.0}{28.0}{30.0}{8.0}{17.0}{26.0}{}{1.0}
\drawshadebox{52.0}{61.0}{70.0}{34.0}{36.0}{38.0}{}{1.0}
\drawshadebox{70.0}{72.0}{74.0}{30.0}{32.0}{34.0}{}{1.0}
\drawshadebox{52.0}{61.0}{70.0}{26.0}{28.0}{30.0}{}{1.0}
\drawshadebox{70.0}{72.0}{74.0}{8.0}{17.0}{26.0}{}{1.0}
\drawshadebox{74.0}{76.0}{78.0}{26.0}{28.0}{30.0}{}{1.0}
\drawshadebox{78.0}{80.0}{82.0}{8.0}{17.0}{26.0}{}{1.0}
\drawdotline{22.0}{22.0}{22.0}{8.0}
\drawdotline{42.0}{22.0}{42.0}{8.0}
\drawdotline{66.0}{22.0}{66.0}{8.0}
\drawdotline{86.0}{22.0}{86.0}{8.0}
\drawdotline{22.0}{42.0}{8.0}{42.0}
\drawdotline{22.0}{22.0}{8.0}{22.0}
\drawdotline{66.0}{22.0}{52.0}{22.0}
\drawdotline{66.0}{42.0}{52.0}{42.0}
\drawcenteredtext{26.0}{6.0}{$x_{1}$}
\drawcenteredtext{30.0}{6.0}{$x_{2}$}
\drawcenteredtext{34.0}{6.0}{$x_{3}$}
\drawcenteredtext{38.0}{6.0}{$x_{4}$}
\drawcenteredtext{22.0}{6.0}{$s$}
\drawcenteredtext{42.0}{6.0}{$s'$}
\drawcenteredtext{66.0}{6.0}{$s$}
\drawcenteredtext{70.0}{6.0}{$x_{1}$}
\drawcenteredtext{74.0}{6.0}{$x_{2}$}
\drawcenteredtext{78.0}{6.0}{$x_{3}$}
\drawcenteredtext{82.0}{6.0}{$x_{4}$}
\drawcenteredtext{86.0}{6.0}{$s'$}
\drawcenteredtext{6.0}{22.0}{$t$}
\drawcenteredtext{6.0}{26.0}{$y_{1}$}
\drawcenteredtext{50.0}{26.0}{$y_{1}$}
\drawcenteredtext{50.0}{22.0}{$t$}
\drawcenteredtext{6.0}{30.0}{$y_{2}$}
\drawcenteredtext{50.0}{30.0}{$y_{2}$}
\drawcenteredtext{50.0}{34.0}{$y_{3}$}
\drawcenteredtext{6.0}{34.0}{$y_{3}$}
\drawcenteredtext{6.0}{38.0}{$y_{4}$}
\drawcenteredtext{50.0}{38.0}{$y_{4}$}
\drawcenteredtext{6.0}{42.0}{$t'$}
\drawcenteredtext{50.0}{42.0}{$t'$}
\drawcenteredtext{4.0}{48.0}{i)}
\drawcenteredtext{48.0}{48.0}{ii)}
\end{picture}
\begin{picture}(92,52)(88,0)
\thinlines
\drawpath{92.0}{8.0}{132.0}{8.0}
\drawpath{136.0}{8.0}{176.0}{8.0}
\drawpath{96.0}{4.0}{96.0}{44.0}
\drawpath{140.0}{4.0}{140.0}{44.0}
\drawframebox{120.0}{32.0}{20.0}{20.0}{}
\drawframebox{164.0}{32.0}{20.0}{20.0}{}
\drawshadebox{96.0}{107.0}{118.0}{34.0}{36.0}{38.0}{}{1.0}
\drawshadebox{118.0}{120.0}{122.0}{26.0}{30.0}{34.0}{}{1.0}
\drawshadebox{96.0}{105.0}{114.0}{26.0}{28.0}{30.0}{}{1.0}
\drawshadebox{114.0}{116.0}{118.0}{8.0}{17.0}{26.0}{}{1.0}
\drawshadebox{122.0}{124.0}{126.0}{8.0}{17.0}{26.0}{}{1.0}
\drawshadebox{140.0}{151.0}{162.0}{34.0}{36.0}{38.0}{}{1.0}
\drawshadebox{162.0}{164.0}{166.0}{30.0}{32.0}{34.0}{}{1.0}
\drawshadebox{140.0}{151.0}{162.0}{26.0}{28.0}{30.0}{}{1.0}
\drawshadebox{158.0}{160.0}{162.0}{8.0}{17.0}{26.0}{}{1.0}
\drawshadebox{166.0}{168.0}{170.0}{8.0}{19.0}{30.0}{}{1.0}
\drawdotline{110.0}{22.0}{110.0}{8.0}
\drawdotline{130.0}{22.0}{130.0}{8.0}
\drawdotline{154.0}{22.0}{154.0}{8.0}
\drawdotline{174.0}{22.0}{174.0}{8.0}
\drawdotline{110.0}{42.0}{96.0}{42.0}
\drawdotline{110.0}{22.0}{96.0}{22.0}
\drawdotline{154.0}{22.0}{140.0}{22.0}
\drawdotline{154.0}{42.0}{140.0}{42.0}
\drawcenteredtext{114.0}{6.0}{$x_{1}$}
\drawcenteredtext{118.0}{6.0}{$x_{2}$}
\drawcenteredtext{122.0}{6.0}{$x_{3}$}
\drawcenteredtext{126.0}{6.0}{$x_{4}$}
\drawcenteredtext{130.0}{6.0}{$s'$}
\drawcenteredtext{110.0}{6.0}{$s$}
\drawcenteredtext{154.0}{6.0}{$s$}
\drawcenteredtext{158.0}{6.0}{$x_{1}$}
\drawcenteredtext{162.0}{6.0}{$x_{2}$}
\drawcenteredtext{166.0}{6.0}{$x_{3}$}
\drawcenteredtext{170.0}{6.0}{$x_{4}$}
\drawcenteredtext{174.0}{6.0}{$s'$}
\drawcenteredtext{94.0}{26.0}{$y_{1}$}
\drawcenteredtext{138.0}{26.0}{$y_{1}$}
\drawcenteredtext{138.0}{22.0}{$t$}
\drawcenteredtext{94.0}{22.0}{$t$}
\drawcenteredtext{94.0}{30.0}{$y_{2}$}
\drawcenteredtext{138.0}{30.0}{$y_{2}$}
\drawcenteredtext{138.0}{34.0}{$y_{3}$}
\drawcenteredtext{94.0}{34.0}{$y_{3}$}
\drawcenteredtext{94.0}{38.0}{$y_{4}$}
\drawcenteredtext{138.0}{38.0}{$y_{4}$}
\drawcenteredtext{94.0}{42.0}{$t'$}
\drawcenteredtext{138.0}{42.0}{$t'$}
\drawcenteredtext{92.0}{48.0}{iii)}
\drawcenteredtext{136.0}{48.0}{iv)}
\end{picture}
\caption{The 4 relevant cases of Figure \ref{fig:1}}
\label{fig:2}
\end{center}
\end{figure}
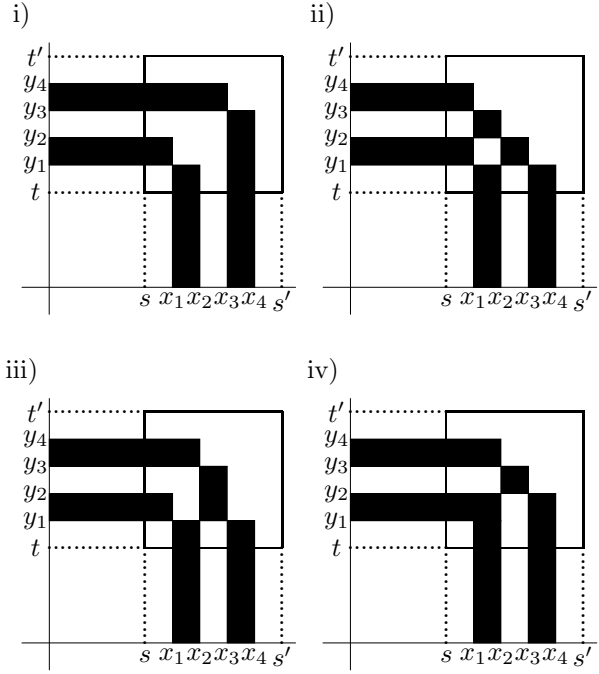


Let us start tackling the case corresponding to i) in Figure \ref{fig:2}. That is,
by estimates \eqref{eq:4} and \eqref{eq:3}, we need to find suitable upper bounds
of the term

\begin{eqnarray}
&&\varepsilon^{4}\int_{I}\int_{J}
\sqrt{x_{1}x_{2}x_{3}x_{4}}\sqrt{y_{1}y_{2}y_{3}y_{4}}\,
\exp[-a(\theta)[(x_{4}-x_{3})y_{3}+(y_{4}-y_{3})x_{3}\cr \nonumber
&&\qquad \qquad +(x_{2}-x_{1})y_{1}+(y_{2}-y_{1})x_{1}]] \,
dx_{1}...dx_{4}dy_{1}...dy_{4}, \label{eq:5}
\end{eqnarray}
where $J=\{\frac{s}{\varepsilon}\leq x_{1}\leq x_{2}\leq x_{3}\leq
x_{4}\leq \frac{s'}{\varepsilon}\}$ and $I=\{\frac{t}{\varepsilon}\leq
y_{1}\leq y_{2}\leq y_{3}\leq y_{4}\leq \frac{t'}{\varepsilon}\}$.

First, estimate $x_{4}$ and $y_{4}$ in the square roots above by $\frac{s'}{\varepsilon}$ and
$\frac{t'}{\varepsilon}$, respectively, and then integrate with respect to theses
two variables. The resulting expression can be easily bounded by, up to some positive constant,
$$\sqrt{s'}\sqrt{t'}
\varepsilon^{3}\int_{I_{1}}\int_{J_{1}}
\frac{\sqrt{x_{1}x_{2}x_{3}}\sqrt{y_{1}y_{2}y_{3}}}{x_{3}y_{3}}
\exp\big[-a(\theta)[(x_{2}-x_{1})y_{1}+(y_{2}-y_{1})x_{1}]\big]
dx_{1}...dx_{3}dy_{1}...dy_{3},$$ where
$J_{1}=\{\frac{s}{\varepsilon}\leq x_{1}\leq x_{2}\leq x_{3}\leq
\frac{s'}{\varepsilon}\}$ and $I_{1}=\{\frac{t}{\varepsilon}\leq
y_{1}\leq y_{2}\leq y_{3}\leq \frac{t'}{\varepsilon}\}$. Now, we
estimate $x_{2}$ and $y_{2}$ by $\frac{s'}{\varepsilon}$ and
$\frac{t'}{\varepsilon}$, respectively, inside the square roots, and
then integrate with respect to $x_{2}$ and $y_{2}$. Hence, up to some
constant, we obtain an estimate for \eqref{eq:5} of the form
\begin{align*}
&s't'\varepsilon^{2}\int_{\frac{s}{\varepsilon}}^{\frac{s'}{\varepsilon}}\frac{1}{\sqrt{x_{1}}}\,dx_{1}
\int_{\frac{s}{\varepsilon}}^{\frac{s'}{\varepsilon}}\frac{1}{\sqrt{x_{3}}}\,dx_{3}
\int_{\frac{t}{\varepsilon}}^{\frac{t'}{\varepsilon}}\frac{1}{\sqrt{y_{1}}}\,dy_{1}
\int_{\frac{t}{\varepsilon}}^{\frac{t'}{\varepsilon}}\frac{1}{\sqrt{y_{3}}}\, dy_{3}\\
& \quad \qquad    =C\big[\sqrt{s'}(\sqrt{s'}-\sqrt{s})\big]^{2}\big[\sqrt{t'}(\sqrt{t'}-\sqrt{t})\big]^{2}\\
& \quad \qquad \leq C(s'-s)^{2}(t'-t)^{2}.
\end{align*}
This concludes the analysis of i) in Figure \ref{fig:2}.

In the remaining three cases, the above-used argument does not directly work.
Instead, we will add some small area in the corresponding drawing in such
a way that we will be able to argue similarly as in case i).
We remark that some of the integrand's estimates that will be obtained in the sequel
will hold everywhere except of a zero Lebesgue measure set of $\mathbb R^{8}$.

Let us start with the analysis of the integral corresponding to ii). We need to
bound the following term:
$$\varepsilon^{4}\int_{I}\int_{J}
\sqrt{x_{1}x_{2}x_{3}x_{4}}\sqrt{y_{1}y_{2}y_{3}y_{4}} \,
e^{-\lambda(\bar{Q})a(\theta)} dx_{1}...dx_{4}dy_{1}...dy_{4},$$
where $J=\{\frac{s}{\varepsilon}\leq x_{1}\leq x_{2}\leq x_{3}\leq
x_{4}\leq \frac{s'}{\varepsilon}\}$, $I=\{\frac{t}{\varepsilon}\leq
y_{1}\leq y_{2}\leq y_{3}\leq y_{4}\leq \frac{t'}{\varepsilon}\}$,
and $\bar{Q}$ is the union of black rectangles corresponding to the
case ii). Note that $A:=\lambda(\bar{Q})$ is given by
$$A=(x_{4}-x_{3})y_{1}+(y_{4}-y_{3})x_{1}
+(x_{2}-x_{1})y_{1}+(y_{2}-y_{1})x_{1}
+(x_{3}-x_{2})(y_{2}-y_{1})+(y_{3}-y_{2})(x_{2}-x_{1}).$$
We split the above integral into two terms:
\begin{align}
& \varepsilon^{4}\int_{I}\int_{J}
\sqrt{x_{1}x_{2}x_{3}x_{4}}\sqrt{y_{1}y_{2}y_{3}y_{4}} \,
e^{-a(\theta)A}\, I_{\{A\geq 2(x_{2}-x_{1})(y_{2}-y_{1})\}}
dx_{1}...dx_{4}dy_{1}...dy_{4}\nonumber \\
& \qquad +\varepsilon^{4}\int_{I}\int_{J}
\sqrt{x_{1}x_{2}x_{3}x_{4}}\sqrt{y_{1}y_{2}y_{3}y_{4}} \,
e^{-a(\theta)A}\,  I_{\{A< 2(x_{2}-x_{1})(y_{2}-y_{1})\}}
dx_{1}...dx_{4}dy_{1}...dy_{4}. \label{eq:7}
\end{align}
When $A\geq 2(x_{2}-x_{1})(y_{2}-y_{1})$, we have
\begin{eqnarray*}
-a(\theta)A&\leq& -\frac{a(\theta)}{2}A-
a(\theta)(x_{2}-x_{1})(y_{2}-y_{1})\cr & \cr
&=&-\frac{a(\theta)}{2}[(x_{4}-x_{3})y_{1}+(y_{4}-y_{3})x_{1}+(x_{2}-x_{1})y_{3}
+(y_{2}-y_{1})x_{3}].\cr
\end{eqnarray*}
Hence, the first integral in \eqref{eq:7} is less or equal than
\begin{align*}
&\varepsilon^{4}\int_{I}\int_{J}
\sqrt{x_{1}x_{2}x_{3}x_{4}}\sqrt{y_{1}y_{2}y_{3}y_{4}}
\, \exp\left\{-\frac{a(\theta)}{2}[ (x_{4}-x_{3})y_{1}+(y_{4}-y_{3})x_{1}\right. \\
&\qquad \qquad \qquad +(x_{2}-x_{1})y_{3} +(y_{2}-y_{1})x_{3}]\Big\}
dx_{1}...dx_{4}dy_{1}...dy_{4},
\end{align*}
and following the same arguments used in the case i),
this term can be estimated by $(s'-s)^{2}(t'-t)^{2}$, up to some positive constant.

On the other hand, as far as the second integral in \eqref{eq:7}
is concerned, observe that we have
\begin{align*}
&\{A< 2(x_{2}-x_{1})(y_{2}-y_{1})\}\\
&  \qquad = \{
(x_{4}-x_{3})y_{1}+(y_{4}-y_{3})x_{1}
+(y_{2}-y_{1})x_{3}+(x_{2}-x_{1})y_{3}<
4(x_{2}-x_{1})(y_{2}-y_{1})\}.
\end{align*}
In particular, in this region we have
$$\frac{1}{4}y_{3}<(y_{2}-y_{1}) \quad\textnormal{and}\quad
\frac{1}{4}x_{3}<(x_{2}-x_{1}),$$
which implies that
\begin{eqnarray*}
A&\geq& (x_{4}-x_{3})y_{1}+(y_{4}-y_{3})x_{1}+\frac{1}{4}x_{3}y_{1}+
\frac{1}{4}y_{3}x_{1}+\frac{1}{4}(x_{3}-x_{2})y_{3}+\frac{1}{4}(y_{3}-
y_{2})x_{3}\cr
& \cr
&\geq&\frac{1}{4}\big[x_{4}y_{1}+y_{4}x_{1}+(x_{3}-x_{2})y_{3}+(y_{3}-y_{2})x_{3}\big].\cr
\end{eqnarray*}
Thus, the second integral in \eqref{eq:7} can be bounded by
\begin{eqnarray*}
&&\varepsilon^{4}\int_{I}\int_{J}
\sqrt{x_{1}x_{2}x_{3}x_{4}}\sqrt{y_{1}y_{2}y_{3}y_{4}}
\\
&& \qquad \qquad \times \exp\left\{-\frac{a(\theta)}{4}[
x_{4}y_{1}+y_{4}x_{1}+(x_{3}-x_{2})y_{3} + (y_{3}-y_{2})x_{3}]
\right\} dx_{1}...dx_{4}dy_{1}...dy_{4},\cr
\end{eqnarray*}
and here again the arguments of the case i) may be applied, yielding
an estimate of the form $(s'-s)^{2}(t'-t)^{2}$, up to some positive constant.

The same idea can be used to deal with the integral corresponding to iii).
Indeed, in this case the area $A$ is given by
$$A=(x_{4}-x_{3})y_{1}+(x_{2}-x_{1})y_{1}+(x_{3}-x_{2})(y_{3}-y_{1})+
(y_{4}-y_{3})x_{2}+(y_{2}-y_{1})x_{1},$$
and here one splits the underlying integral taking into account the
regions $\{A\geq 2(x_{3}-x_{2})y_{1}\}$ and $\{A< 2(x_{3}-x_{2})y_{1}\}$.
In the former, one has
\begin{eqnarray*}
-a(\theta) A &\leq&-\frac{a(\theta)}{2} A-
a(\theta)(x_{3}-x_{2})y_{1}\cr & \cr
&=&-\frac{a(\theta)}{2}[(x_{4}-x_{1})y_{1}+(y_{2}-y_{1})x_{1}+(x_{3}-x_{2})y_{3}+
(y_{4}-y_{3})x_{2}]\cr
\end{eqnarray*}
and the desired estimated is obtained by using the
same computations as for the case i). Note that, in fact, variables
which have to be bounded and integrated with respect to are
$x_{4},\, y_{4},\, y_{2}, \, x_{3}$, following this specific order.
On the other hand, in the region $\{A< 2(x_{3}-x_{2})y_{1}\}$, we get
$$\{A< 2(x_{3}-x_{2})y_{1}\}=
\{(x_{4}-x_{1})y_{1}+(y_{2}-y_{1})x_{1}+(x_{3}-x_{2})y_{3}+
(y_{4}-y_{3})x_{2}< 4(x_{3}-x_{2})y_{1}\}.$$
So, in particular, in this region we have
$$\frac{1}{4}y_{3}<y_{1}\quad\textnormal{and}\quad
\frac{1}{4}(x_{4}-x_{1})<(x_{3}-x_{2}),$$
where we deduce
\begin{eqnarray*}
A
&\geq&\frac{1}{4}\big[
(x_{4}-x_{3})y_{3}+(x_{2}-x_{1})y_{3}+(x_{4}-x_{1})(y_{3}-y_{1})+
(y_{4}-y_{3})x_{2}+ (y_{2}-y_{1})x_{1}\big]\cr
& \cr
&\geq&\frac{1}{4}\big[
(y_{4}-y_{3})x_{1}+(x_{2}-x_{1})y_{3}+(x_{4}-x_{3})y_{1}+
(y_{2}-y_{1})x_{3}\big].\cr
\end{eqnarray*}
At this point, we can follow the arguments of the preceding cases.

Finally, it only remains to estimate the integral involving case iv) in Figure \ref{fig:2}.
In this case,
$$A =
(x_{4}-x_{3})y_{2}+(x_{2}-x_{1})y_{2}+(y_{4}-y_{3})x_{2}+
(y_{2}-y_{1})x_{1}+(x_{3}-x_{2})(y_{3}-y_{2})]$$
and the splitting regions are $\{A\geq 2(x_{3}-x_{2})y_{2}\}\cup\{A\geq2(y_{3}-y_{2})x_{2}\}$
and the corresponding complement.

In the first region, condition $A\geq 2(x_{3}-x_{2})y_{2}$ turns out to be equivalent to
$$-a(\theta) A\leq-\frac{a(\theta)}{2}[(x_{3}-x_{2})y_{3}+(y_{4}-y_{3})x_{2}+(y_{2}-y_{1})x_{1}+
(x_{4}-x_{1})y_{1}],$$
so we will be able to mimic the arguments used so far. Moreover, note that this case
is symmetric in $x$ and $y$, which implies that the computations in the
case $A\geq 2(y_{3}-y_{2})x_{2}$ will be the same just by exchanging $x$ and $y$.

As far as the case $\{A< 2(x_{3}-x_{2})y_{2}\}\cap\{A<2(y_{3}-y_{2})x_{2}\}$
is concerned, we have
\begin{align*}
&\{A< 2(x_{3}-x_{2})y_{2}\}\cap\{A<2(y_{3}-y_{2})x_{2}\}\\
& \quad =
\{(x_{4}-x_{1})y_{2}+(y_{4}-y_{3})x_{2}+(y_{2}-y_{1})x_{1}+
(x_{3}-x_{2})y_{3}\leq4(x_{3}-x_{2})y_{2}\}\\
& \quad \qquad \cap
\{(y_{4}-y_{1})x_{2}+(x_{4}-x_{3})y_{2}+(x_{2}-x_{1})y_{1}+
(y_{3}-y_{2})x_{3}\leq4(y_{3}-y_{2})x_{2}\}.
\end{align*}
In particular, one has
$$\frac{1}{4}y_{3}\leq y_{2}\quad\textnormal{and}\quad
\frac{1}{4}x_{3}\leq x_{2},$$
which implies
\begin{eqnarray*}
A&\geq&\frac{1}{4}\big[(x_{4}-x_{3})y_{3}+(x_{2}-x_{1})y_{3}+(y_{4}-y_{3})x_{3}+
(y_{2}-y_{1})x_{1}\big]\cr
& \cr
&\geq&\frac{1}{4}\big[(x_{4}-x_{3})y_{3}+(y_{4}-y_{3})x_{3}+(x_{2}-x_{1})y_{1}+
(y_{2}-y_{1})x_{1}\big].\cr
\end{eqnarray*}
One can conclude the proof by following the same arguments as in the preceding cases.
\end{proof}


\section{Limit identification}
\label{sec:iden}

Let $\{\mathbb{P}_\varepsilon\}_{\varepsilon>0}$ be the family of
probability laws in $\C([0,S]\times [0,T];\bC)$ corresponding to
$\{X_\varepsilon\}_{\varepsilon>0}$. By Proposition \ref{prop:2},
there exists a subsequence $\{\mathbb{P}_{\varepsilon_n}\}_{n\geq
1}$ of $\{\mathbb{P}_\varepsilon\}_{\varepsilon>0}$ converging, in
the weak sense in the space $\C([0,S]\times [0,T];\bC)$, to some
probability measure $\P$. This section is devoted to prove that $\P$
is the law of a complex Brownian sheet, that is a random field whose
real and imaginary parts are independent Brownian sheets.

We will use the following characterization of the complex Brownian
sheet. It is an adaptation of the characterization of the
(real-valued) Brownian sheet given in \cite[Thm. 4.1]{BJ97} (other
characterizations of Brownian sheet can be found, e.g., in
\cite{FN,Tudor}).

\begin{theorem}\label{thm:3}
 Let $X=\{X(s,t);\, (s,t)\in [0,S]\times [0,T]\}$ be a complex-valued and
 continuous process such that $X(s,0)=X(0,t)=0$
 for all $s\in [0,S]$ and $t\in [0,T]$. We write $X=X^1+ i X^2$.
 Let $\{\F_{s,t};\, (s,t)\in [0,S]\times [0,T]\}$ be the natural
 filtration associated to $X$. Then, the following statements are equivalent:
 \begin{itemize}
  \item[(i)] $X$ is a complex Brownian sheet.
  \item[(ii)] $X^1$ and $X^2$ are strong martingales and,
  for all $(0,0)<(s,t)\leq (s',t')\leq (S,T)$ and $i=1,2$,
  \begin{align}
   & \qquad \E\big[\big(\Delta_{s,t} X^i(s',t')\big)^2|\F_{s,T}\big]   =
  (s'-s)(t'-t), \label{eq:111}\\
    & \E\big[\big(\Delta_{s,t}X^1(s',t')\big)\big(\Delta_{s,t}X^2(s',t')\big)
    | \F_{s,T}\big]  =0 \label{eq:8888}.
  \end{align}
 \end{itemize}
\end{theorem}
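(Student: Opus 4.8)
The statement is an equivalence, so the plan is to prove the two implications separately, treating $(i)\Rightarrow(ii)$ as the routine direction and concentrating the effort on $(ii)\Rightarrow(i)$. For $(i)\Rightarrow(ii)$ I would argue directly from the definition of a complex Brownian sheet: by definition $X^1$ and $X^2$ are independent Brownian sheets, and each is a strong martingale because, as recalled in Section \ref{sec:prel}, the rectangular increment $\Delta_{s,t}X^i(s',t')$ is centred and independent of $\F_{S,t}\vee\F_{s,T}$; here one also uses that this increment is independent of the whole of $X^{3-i}$, so that it is in fact independent of the \emph{joint} natural filtration $\F_{s,t}$ of $X$. For the same reason $\Delta_{s,t}X^i(s',t')$ is independent of $\F_{s,T}$ and $N(0,(s'-s)(t'-t))$-distributed, which gives \eqref{eq:111}; and \eqref{eq:8888} follows since $\Delta_{s,t}X^1(s',t')$ and $\Delta_{s,t}X^2(s',t')$ are independent, centred, and jointly independent of $\F_{s,T}$, so the conditional expectation of their product factorises into $\E[\Delta_{s,t}X^1(s',t')]\,\E[\Delta_{s,t}X^2(s',t')]=0$.

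For $(ii)\Rightarrow(i)$ I would follow the scheme of the scalar characterization \cite[Thm. 4.1]{BJ97}, adapting it to the $\bC$-valued (equivalently $\R^2$-valued) process $X=X^1+iX^2$. As an immediate consequence of the hypotheses, applying the scalar result separately to $X^1$ and to $X^2$ — each a continuous strong martingale null on the axes with conditional increment variance $(s'-s)(t'-t)$ — already shows that $X^1$ and $X^2$ are, \emph{individually}, Brownian sheets. Moreover, for every fixed $(\alpha,\beta)\in\R^2$ the linear combination $\alpha X^1+\beta X^2$ is again a continuous strong martingale null on the axes, and combining \eqref{eq:111} with \eqref{eq:8888} yields $\E[(\Delta_{s,t}(\alpha X^1+\beta X^2)(s',t'))^2\mid\F_{s,T}]=(\alpha^2+\beta^2)(s'-s)(t'-t)$, so $(\alpha^2+\beta^2)^{-1/2}(\alpha X^1+\beta X^2)$ is a Brownian sheet as well.

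The substantial point, and the one I expect to be the main obstacle, is that this marginal information is \emph{not} enough to conclude independence. One can check, using the strong martingale property together with \eqref{eq:8888}, that all mutual covariances $\E[X^1(s,t)X^2(s',t')]$ vanish; but marginal Gaussianity plus zero correlation does not force independence without \emph{joint} Gaussianity, which the one-direction-at-a-time argument does not deliver (proportional coefficient vectors only). I would therefore run the characterization at the level of the pair, showing that the conditional joint characteristic function of the rectangular increment factorises as
\[
\E\left[\exp\!\big(i\lambda_1\Delta_{s,t}X^1(s',t')+i\lambda_2\Delta_{s,t}X^2(s',t')\big)\,\big|\,\F_{s,T}\right]
=\exp\!\left(-\tfrac{1}{2}(\lambda_1^2+\lambda_2^2)(s'-s)(t'-t)\right),
\]
for all $(\lambda_1,\lambda_2)\in\R^2$. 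From this identity one reads off everything at once: the increment is independent of $\F_{s,T}$, it is Gaussian, its two real components are uncorrelated hence — being jointly Gaussian — independent, and each is $N(0,(s'-s)(t'-t))$. Since $X$ vanishes on the axes, this says precisely that $X^1$ and $X^2$ are independent Brownian sheets, i.e. $X$ is a complex Brownian sheet; the upgrade from conditioning on $\F_{s,T}$ to the independence from $\F_{S,t}\vee\F_{s,T}$ required by the definition is supplied, exactly as in the scalar case, by the strong martingale hypothesis.

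The proof of the displayed factorisation is where the work lies, and here I would transcribe the core of \cite[Thm. 4.1]{BJ97} into two dimensions. The idea is the two-parameter analogue of L\'evy's characterization: one introduces the complex exponential martingale associated with $\lambda_1X^1+\lambda_2X^2$ and uses a two-parameter It\^o/Green formula (in the sense of Cairoli--Walsh and Wong--Zakai) to identify its compensator. The conditional second-moment conditions \eqref{eq:111}--\eqref{eq:8888} are what pin down the quadratic-variation measures of $X^1$ and $X^2$ as the deterministic $ds\,dt$ and their mutual bracket as zero; the vanishing of this cross bracket is exactly the mechanism that suppresses any $\lambda_1\lambda_2$ term and produces the product/Gaussian form. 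The technical heart is thus the justification of the two-parameter stochastic calculus for the strong martingales $X^1,X^2$ and the passage from the conditional-variance hypotheses to deterministic brackets; once that is in place, the exponential-martingale computation closes the argument just as in the scalar setting.
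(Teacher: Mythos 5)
Your direction (i)$\Rightarrow$(ii) matches the paper's argument, and your identification of the conditional factorisation
$\E\big[e^{i(\lambda_1\Delta_{s,t}X^1(s',t')+\lambda_2\Delta_{s,t}X^2(s',t'))}\mid\F_{s,T}\big]=e^{-\frac12(\lambda_1^2+\lambda_2^2)(s'-s)(t'-t)}$
as the crux is exactly right: in substance, that identity is what the paper establishes. But the reason you give for abandoning the linear-combination route is mistaken, and it is precisely that route the paper follows. For fixed $(a,b)$ the paper considers $Z_u:=a\,\Delta_{s,t}X^1(u,t')+b\,\Delta_{s,t}X^2(u,t')$, $u\in[s,S]$, which by the strong martingale hypothesis is a continuous martingale with respect to the \emph{joint} filtration $\{\F_{u,T}\}$; conditions \eqref{eq:111} and \eqref{eq:8888} give the deterministic bracket $\langle Z\rangle_u=(a^2+b^2)(u-s)(t'-t)$, and the classical \emph{one-parameter} L\'evy characterization then makes $Z$ a Brownian motion \emph{for that filtration}. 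The output is not merely that each fixed combination is marginally Gaussian: L\'evy's theorem also makes the increments of $Z$ independent of $\F_{u,T}$, i.e.\ of the past of \emph{both} components. Hence for every $(a,b)$ the conditional characteristic function $\E[e^{i(a\Delta_{s,t}X^1(s',t')+b\Delta_{s,t}X^2(s',t'))}\mid\F_{s,T}]$ is the deterministic Gaussian one, and since this holds for all $(a,b)$ simultaneously, these deterministic conditional laws assemble into exactly your displayed factorisation. Your ``proportional coefficient vectors only'' objection is valid against marginal Gaussianity of each combination taken in isolation, but it dissolves once one keeps track of the filtration: independence-from-the-common-past is the extra output that upgrades the fixed-$(a,b)$ statements to the joint conditional law.

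By contrast, the route you propose in place of this---exponential martingales and a two-parameter It\^o/Green formula in the Cairoli--Walsh and Wong--Zakai calculus---is left entirely at the level of a plan at what you yourself call the technical heart, and that deferral is a genuine gap rather than a routine one. Planar martingale calculus has well-known traps: a continuous two-parameter martingale with the correct deterministic bracket need \emph{not} be a Brownian sheet (the strong martingale property is essential, which is why the paper leans on it), the planar It\^o formula carries mixed second-order stochastic measures whose existence must be justified, and hypothesis \eqref{eq:111} conditions only on $\F_{s,T}$, i.e.\ in one direction, so ``passing from the conditional-variance hypotheses to deterministic brackets'' in the two-parameter sense is not a formality. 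None of this machinery is needed: the paper's reduction to one-parameter processes along horizontal lines, run in the joint filtration, is both elementary and complete (with the boundary cases $s=0$ or $t=0$ handled by considering $\Delta_{s,\varepsilon}X^i(s',t')$ and letting $\varepsilon\to0$). In summary: your target identity is correct and your easy direction is fine, but as written the hard direction is not proved, and the argument you rejected as insufficient is in fact the paper's (correct and simpler) proof.
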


\begin{proof}
    It is obvious that (i) implies  (ii) because, for all $(0,0)<(s,t)\leq (s',t')\leq (S,T)$ and
    $i=1,2$, it holds
    $$\E[\Delta_{s,t}X^i(s',t')|\F_{S,t}\lor\F_{s,T}]= \E[\Delta_{s,t}X^i(s',t')]=0$$
    and
    \begin{align*}
        \E\big[\big(\Delta_{s,t}X^i(s',t')\big)^{2}|\F_{s,T}\big] & =
        \E\big[ \E\big[\big(\Delta_{s,t}X^i(s',t')\big)^{2}|\F_{S,t}\lor\F_{s,T}\big]|
        \F_{s,T}\big]\\
         & =
        (s'-s)(t'-t).
    \end{align*}

    We will prove now that (ii) implies (i). First, we check that
    $X^1$ and $X^2$ define (real-valued) Brownian sheets. We will
    only
    write the proof for $X^1$, because for $X^2$ it is exactly the
    same.

    Fix $(0,0)<(s,t)<(s',t')\leq (S,T)$ and define the process
    $Y=\{Y_u, \, u\in [s,S]\}$ by $Y_{u}:=\Delta_{s,t}X^1 (u,t')$ (note that, indeed, it
    does not depend on $s'$). This process is a
    martingale with respect to the filtration  $\{\F_{u,T},\, u\in[s,S]
    \}$, since it is adapted to it and, for any $s\leq u<u'\leq S$,
    \begin{align*}
        \E[Y_{u'}-Y_{u}|\F_{u,T}] & =
        \E[\Delta_{u,t}X^1(u',t')|\F_{u,T}]\\
        & =  \E\big[ \E[\Delta_{u,t}X^1(u',t')|\F_{u,T}\lor\F_{S,t}]|\F_{u,T}\big]
         =0,
    \end{align*}
because $X^1$ is a strong martingale, by hypothesis. On the other
hand, note that, by \eqref{eq:111},
    $$
    \E\big[(Y_{u'}-Y_{u})^{2}|\F_{u,T}\big] = \E\big[(\Delta_{u,t}X^1 (u',t'))^{2}|\F_{u,T}\big]
    =(u'-u)(t'-t),
    $$
which implies that the quadratic variation of the process $Y$ is
$\langle Y \rangle_u = (u-s)(t'-t)$. Hence, by L\'evy's
characterization theorem, we obtain that  $Y=\{Y_{u}, \, u\in
[s,S]\}$ defines a Brownian motion with respect to the filtration
$\{\F_{u,T},\, u\in [s,S]\}$ and with variance function $u\mapsto
(u-s)(t'-t)$.

Thus, the increments $\Delta_{s,t}X^1 (s',t')=Y_{s'}-Y_{s}$ are
normally distributed random variables with mean zero and variance
$(s'-s)(t'-t)$. Note that, so far, we are assuming that $s,t>0$. In
the case $t=0$ (the case $s=0$ can be argued in the same way), we
consider the increments $\Delta_{s,\varepsilon}X^1(s',t')$, with
$\varepsilon>0$. We know that they are Gaussian random variables,
and they converge, as $\varepsilon$ tends to zero, to
$\Delta_{s,0}X^1(s',t')$, which will thus also be a centered
Gaussian random variable with variance equal to the corresponding
limit of variances, that is $(s'-s)t'$. Finally, observe that the
(rectangular) increments of $X^1$ are independent since those of $Y$
are. Therefore, $X^1$ defines a Brownian sheet.

As we mentioned before, the same argument proves that $X^2$ also
defines a Brownian sheet. Hence, in order to conclude the proof, it
remains to check that $X^1$ and $X^2$ are independent. By
\eqref{eq:8888}, it suffices to verify that any linear combination
of $X^1$ and $X^2$ defines a Gaussian random variable. For this, we
use a similar argument as above, as follows. Let $a,b\in \mathbb{R}$
(not both equal to zero) and $(0,0)<(s,t)<(s',t')\leq (S,T)$, and
define
\[
Z_u:= a \Delta_{s,t} X^1(u,t')+ b \Delta_{s,t} X^2(u,t'), \quad u\in
[s,S].
\]
Then, one verifies that the process $Z=\{Z_u,\, u\in [s,S]\}$ is a
martingale with respect to the filtration $\{\F_{u,T},\,
u\in[s,S]\}$ and, owing to \eqref{eq:8888}, it has quadratic
variation $\langle Z\rangle_u=(u-s)(t'-t)(a^2+b^2)$. L\'evy's
characterization theorem implies that $Z$ is a Brownian motion with
variance function $u\mapsto (u-s)(t'-t)(a^2+b^2)$. In particular,
$Z_u$ defines a Gaussian random variable, for all $u\in [s,S]$. As
we did above, this fact can be extended to the case where $s=0$
and/or $t=0$. This concludes the proof.
\end{proof}

Owing to Theorem \ref{thm:3} and Proposition \ref{prop:2}, the
following two propositions will guarantee the validity of (almost
all) the statement of Theorem \ref{thm:main}.

\begin{proposition}\label{prop:5}
Recall that $\P$ denotes the weak limit in $\C([0,S]\times
[0,T];\bC)$ of a converging subsequence of the family
$\{\mathbb{P}_\varepsilon\}_{\varepsilon>0}$. Let $X=\{X(s,t);\,
(s,t)\in [0,S]\times [0,T]\}$ be the corresponding (complex-valued)
canonical process and $\{\F_{s,t};\, (s,t)\in [0,S]\times [0,T]\}$
its associated natural filtration. Then, the real and imaginary
parts of $X$ define strong martingales under the probability $\P$.
\end{proposition}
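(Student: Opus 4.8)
The plan is to verify the strong martingale property of $X^1$ and $X^2$ under $\P$ by testing the rectangular increment against bounded continuous functionals of the ``past'' and then transferring everything, via the weak convergence $\mathbb{P}_{\varepsilon_n}\Rightarrow\P$, to explicit computations at the level of the approximating fields $X_\ep$. The two cheap requirements---that $X^i(s,0)=X^i(0,t)=0$ and that $\E_\P[|X^i(s,t)|]<\infty$---are inherited directly from the corresponding properties of $X_\ep$ together with the uniform moment bound of Proposition \ref{prop:2}. So the whole content is the identity $\E_\P[\Delta_{s,t}X^i(s',t')\mid\F_{S,t}\vee\F_{s,T}]=0$, which I will establish in integrated form: for points $(u_1,v_1),\dots,(u_m,v_m)$ lying in the L-shaped region $R_{s,t}:=([0,S]\times[0,t])\cup([0,s]\times[0,T])$ and bounded continuous $g\colon\bC^m\to\R$, I want
\[
\E_\P\Big[\Delta_{s,t}X^i(s',t')\,g\big(X(u_1,v_1),\dots,X(u_m,v_m)\big)\Big]=0 .
\]
Ranging over such points (dense in $R_{s,t}$) and functions $g$ then recovers the conditional expectation statement.

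First I would pass to the approximations. The map $X\mapsto \Delta_{s,t}X^i(s',t')\,g(X(u_1,v_1),\dots)$ is continuous on $\C([0,S]\times[0,T];\bC)$, and by Proposition \ref{prop:2} the family $\{\Delta_{s,t}X^i_\ep(s',t')\}_{\ep}$ is bounded in $L^4$, hence uniformly integrable; since $g$ is bounded, the whole functional is uniformly integrable along $\varepsilon_n$. Thus the weak convergence upgrades to convergence of expectations, and it suffices to prove $\lim_{\ep\to0}\E[\Delta_{s,t}X^i_\ep(s',t')\,g(X_\ep(u_1,v_1),\dots,X_\ep(u_m,v_m))]=0$. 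The gain from working with $X_\ep$ is that $X_\ep(u,v)$ depends only on $L$ over $[0,u/\ep]\times[0,v/\ep]$, so for $(u_j,v_j)\in R_{s,t}$ the factor $g(\cdots)$ is measurable with respect to $\mathcal G_\ep:=\sigma\{L(x,y)\colon (x,y)\in R_{s,t}/\ep\}$, the $\sigma$-algebra of the L-shaped region rescaled by $1/\ep$. Conditioning on $\mathcal G_\ep$, it is enough to show $\E[\Delta_{s,t}X_\ep(s',t')\mid\mathcal G_\ep]\to0$ in $L^2$, since Cauchy--Schwarz and $\|g\|_\infty<\infty$ bound the target expectation by $\|g\|_\infty\,\|\E[\Delta_{s,t}X_\ep(s',t')\mid\mathcal G_\ep]\|_{L^2}$.

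To compute this conditional expectation I would use, for $(x,y)$ in the rescaled rectangle $[s/\ep,s'/\ep]\times[t/\ep,t'/\ep]$, the decomposition
\[
L(x,y)=\Delta_{[s/\ep,x]\times[t/\ep,y]}L+M_\ep(x,y),\qquad M_\ep(x,y):=L(s/\ep,y)+L(x,t/\ep)-L(s/\ep,t/\ep).
\]
The first term is an increment over a rectangle whose interior is disjoint from $R_{s,t}/\ep$, hence independent of $\mathcal G_\ep$, while $M_\ep(x,y)$ is $\mathcal G_\ep$-measurable. Using \eqref{eq:999} this yields
\[
\E\big[\Delta_{s,t}X_\ep(s',t')\mid\mathcal G_\ep\big]=\ep K\int_{t/\ep}^{t'/\ep}\int_{s/\ep}^{s'/\ep}\sqrt{xy}\,e^{i\theta M_\ep(x,y)}\,e^{-(x-s/\ep)(y-t/\ep)\Psi(\theta)}\,dx\,dy ,
\]
and the factor $e^{-(x-s/\ep)(y-t/\ep)a(\theta)}$ concentrates the mass near the corner $(s/\ep,t/\ep)$. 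Squaring, taking expectations, and noting that the two increments inside $M_\ep(x_1,y_1)-M_\ep(x_2,y_2)$ live on disjoint rectangles (one above, one below the line $y=t/\ep$) produces, again through \eqref{eq:999}, an additional factor $e^{-(s/\ep)|y_1-y_2|a(\theta)}e^{-(t/\ep)|x_1-x_2|a(\theta)}$ concentrating the double integral on the diagonal $\{x_1\approx x_2,\ y_1\approx y_2\}$ at scale $\ep$.

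The delicate point---and the one I expect to be the main obstacle---is that both localizations must be used simultaneously. Bounding $|e^{i\theta M_\ep}|\le1$ and keeping only the corner factor leaves an integral of size $O(\log(1/\ep))$, which \emph{diverges}; it is precisely the oscillation of $e^{i\theta M_\ep}$, resurfacing as the diagonal factor in the second moment, that rescues the estimate. Performing the $x_2,y_2$ integrations against the diagonal factor contributes $O(\ep^2)$, after which the surviving corner integral $\int\!\int e^{-(x_1-s/\ep)(y_1-t/\ep)a(\theta)}$ contributes only a logarithm; together with $\sqrt{x_1x_2y_1y_2}\le s't'/\ep^2$ and the prefactor $\ep^2K^2$, this should give
\[
\E\big[\big|\E[\Delta_{s,t}X_\ep(s',t')\mid\mathcal G_\ep]\big|^2\big]\le C\,\ep^2\big(1+\log(1/\ep)\big)\xrightarrow[\ep\to0]{}0 ,
\]
the desired $L^2$-convergence. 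Since $|\E[\Delta_{s,t}X^i_\ep(s',t')\mid\mathcal G_\ep]|\le|\E[\Delta_{s,t}X_\ep(s',t')\mid\mathcal G_\ep]|$, the same bound passes to the real and imaginary parts, closing the argument. Here the hypothesis $a(\theta)\neq0$ is essential, as it is what makes all the exponential factors genuinely decaying.
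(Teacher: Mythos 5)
Your proposal is correct and follows essentially the same route as the paper: the reduction to expectations of $\Delta_{s,t}X_\ep(s',t')$ against bounded continuous functionals of the past (via tightness and the uniform $L^4$ bound of Proposition \ref{prop:2}), followed by the $L^2$ bound on the conditional expectation, is exactly the paper's proof of Proposition \ref{prop:5} combined with Lemma \ref{lem:9}, including the same decomposition $L(x,y)=\Delta_{\frac{s}{\ep},\frac{t}{\ep}}L(x,y)+M_\ep(x,y)$, the same corner factor $e^{-\Psi(\theta)(x-s/\ep)(y-t/\ep)}$, the same diagonal decay from the disjoint-rectangle increments in $M_\ep(x_1,y_1)-M_\ep(x_2,y_2)$, and the same final order $O(\ep^2\log(1/\ep))$. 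The only (benign) deviations are that you condition on the larger $\sigma$-algebra $\mathcal{G}_\ep$ generated by $L$ on the rescaled L-shaped region rather than on $\F^\ep_{S,t}\vee\F^\ep_{s,T}$, which makes the measurability of $e^{i\theta M_\ep}$ immediate, and that you absorb the paper's enumeration of variable orderings into the absolute values $|x_1-x_2|$, $|y_1-y_2|$.
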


\begin{proposition}\label{prop:6}
Let $X$ be the canonical process defined in the previous
proposition. Then, for all $(0,0)< (s,t)\leq (s',t')\leq (S,T)$, it
holds:
\[
\E_\P \big[ \big(\Delta_{s,t} \mathsf{Re}(X)(s',t')\big)^2
|\F_{s,T}\big]=(s'-s)(t'-t),
\]
and
\[
\E_\P \big[ \big(\Delta_{s,t} \mathsf{Im}(X)(s',t')\big)^2
|\F_{s,T}\big]=(s'-s)(t'-t),
\]
\end{proposition}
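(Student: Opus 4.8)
The plan is to compute the conditional second moment of the increment $\Delta_{s,t}\mathsf{Re}(X)(s',t')$ directly at the level of the approximating fields $X_\varepsilon$ and then pass to the weak limit $\P$. The starting observation is that $\mathsf{Re}(X_\varepsilon)(s,t)=\varepsilon K\int_0^{t/\varepsilon}\int_0^{s/\varepsilon}\sqrt{xy}\,\cos(\theta L(x,y))\,dxdy$, so that its rectangular increment is an integral of $\cos(\theta L)$ over the rescaled rectangle. I would then write
\[
\E\bigl[(\Delta_{s,t}\mathsf{Re}(X_\varepsilon)(s',t'))^2\bigr]
=\varepsilon^2K^2\int\!\!\int\!\!\int\!\!\int \sqrt{x_1x_2y_1y_2}\,\E\bigl[\cos(\theta L(x_1,y_1))\cos(\theta L(x_2,y_2))\bigr]\,dx_1dx_2dy_1dy_2,
\]
where all four variables range over the rescaled rectangle $[\tfrac{s}{\varepsilon},\tfrac{s'}{\varepsilon}]\times[\tfrac{t}{\varepsilon},\tfrac{t'}{\varepsilon}]$. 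Using $\cos(\theta L)=\tfrac12(e^{i\theta L}+e^{-i\theta L})$, the product expands into four exponential terms whose expectations factor through the independent-increment structure exactly as in the proof of Proposition~\ref{prop:2}, evaluated via the characteristic function \eqref{eq:999}. The role of the normalizing constant $K$ in \eqref{eq:50} is precisely to make the leading-order contribution of this double integral equal to $(s'-s)(t'-t)$; here the hypothesis $a(\theta)a(2\theta)\neq0$ enters to guarantee that the exponent $a(2\theta)$ appearing in the cross terms is strictly positive, so those terms are genuinely negligible after rescaling.

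The heart of the computation is a careful asymptotic analysis of the double integral as $\varepsilon\to0$. Ordering the variables (say $x_1<x_2$, $y_1<y_2$) and using the change of variables $u=\varepsilon x$, $v=\varepsilon y$ to return to the fixed rectangle $[s,s']\times[t,t']$, the exponential weights $e^{-\lambda(\cdot)a(\theta)}$ and $e^{-\lambda(\cdot)a(2\theta)}$ become sharply concentrated near the diagonal $x_1=x_2$, $y_1=y_2$ on the $1/\varepsilon$ scale. The dominant term is the one whose exponent vanishes on the diagonal; a Laplace-type evaluation then yields $(s'-s)(t'-t)$ after accounting for the $\sqrt{x_1x_2y_1y_2}$ factor, while the remaining terms carry a non-degenerate exponential decay and vanish in the limit. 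I expect this to reduce to one-dimensional Kac--Stroock-type integrals of the form $\varepsilon\int\int \sqrt{x_1x_2}\,e^{-a(\theta)|x_2-x_1|\,y}\,dx_1dx_2$, which are exactly the computations already implicit in the case~i) analysis of Proposition~\ref{prop:2}. The identity for $\mathsf{Im}(X)$ follows by the same argument with $\cos$ replaced by $\sin$, using $\sin^2=\tfrac12(1-\cos(2\theta L))$.

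The genuinely delicate step is upgrading the unconditional convergence of second moments to the \emph{conditional} identity $\E_\P[(\Delta_{s,t}\mathsf{Re}(X)(s',t'))^2|\F_{s,T}]=(s'-s)(t'-t)$ under the limit law. The standard route, which I would follow, is to show that for any bounded continuous $\F_{s,T}$-measurable functional $\Phi$ of the process,
\[
\E_\P\bigl[\bigl(\Delta_{s,t}\mathsf{Re}(X)(s',t')\bigr)^2\,\Phi\bigr]=(s'-s)(t'-t)\,\E_\P[\Phi],
\]
by establishing the corresponding statement along the convergent subsequence $\P_{\varepsilon_n}$ and passing to the limit. The main obstacle is that $(\Delta_{s,t}\mathsf{Re}(X)(s',t'))^2$ is not bounded, so weak convergence alone does not transfer the expectation; this is where the uniform fourth-moment bound of Proposition~\ref{prop:2} is essential, since it provides the uniform integrability of the squared increments needed to justify passing to the limit. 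Closing this uniform-integrability gap, together with verifying that the conditioning is compatible with the factorization of the exponentials (so that the $\F_{s,T}$-measurable part genuinely decouples from the increment in the limit), is the technical crux of the proof.
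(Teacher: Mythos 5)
Your overall strategy does coincide in outline with the paper's: test $\big(\Delta_{s,t}\mathsf{Re}(X)(s',t')\big)^2-(s'-s)(t'-t)$ against bounded continuous functionals $\varphi(X(s_1,t_1),\dots,X(s_n,t_n))$ of the past, prove the corresponding limit along the approximating sequence, and invoke the fourth-moment bound of Proposition \ref{prop:2} for uniform integrability; moreover your product-to-sum expansion, separating difference terms (decay rate $a(\theta)$) from sum terms (decay rate $a(2\theta)$), is exactly the algebra behind the paper's symmetrization $A_\ep+B_\ep$ (the modulus squared $|\Delta_{s,t}X_\ep(s',t')|^2$) and $A_\ep-B_\ep$ (the ``holomorphic'' square). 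But there is a genuine gap at precisely the point you yourself flag as the crux, and it is not a uniform-integrability issue: at fixed $\ep$ the functional $\varphi(X_\ep(s_1,t_1),\dots)$ is \emph{correlated} with the squared increment, so the unconditional limit $\E[(\Delta_{s,t}\mathsf{Re}(X_\ep)(s',t'))^2]\to(s'-s)(t'-t)$ that your second paragraph computes does not yield $\E[\varphi\cdot((\Delta_{s,t}\mathsf{Re}(X_\ep)(s',t'))^2-(s'-s)(t'-t))]\to 0$; what is missing is a decorrelation mechanism. The paper supplies two, one per half of the split. For $A_\ep+B_\ep$ it proves that the \emph{conditional} expectation $\E[|\Delta_{s,t}X_\ep(s',t')|^2\,|\,\F^\ep_{s,T}]$ converges in $L^2(\Omega)$ to the constant $2(s'-s)(t'-t)$, which requires, besides the first-moment limit (Lemma \ref{lem:25}), the second-moment bound $\E[(\E[|\Delta_{s,t}X_\ep(s',t')|^2|\F^\ep_{s,T}])^2]\le C_\ep$ with $C_\ep\to 4(s'-s)^2(t'-t)^2$ of Lemma \ref{lem:99} --- the sixteen-term, six-ordering analysis that constitutes the bulk of the section and is entirely absent from your proposal. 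For $A_\ep-B_\ep$ no conditioning is used at all: one rewrites $L(x_1,y_1)+L(x_2,y_2)$ as a sum of increments over disjoint rectangles, pulls the factor $e^{i2\theta L(s/\ep,\,t/\ep)}$ into the expectation against the bounded $\varphi$, and gets decay at rate $a(2\theta)$ from the doubled increments, whence $\Lambda_\ep\to 0$. This factorization is where $a(2\theta)\neq 0$ genuinely enters, and it is exactly the ``compatibility of conditioning with the factorization'' that you name but do not carry out.

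A secondary, but real, inaccuracy is your Laplace heuristic for the main term. The limit is not a routine concentration near the diagonal: the integrands carry oscillatory factors $\cos(b(\theta)\cdots)$, and the two ordered contributions converge to $s't'K^2(a(\theta)^2-b(\theta)^2)/(a(\theta)^2+b(\theta)^2)^2$ and $s't'K^2/(a(\theta)^2+b(\theta)^2)$ respectively; only their sum, combined with the specific constant $K$ of \eqref{eq:50}, produces $2s't'$, so $b(\theta)$ contributes at leading order and cannot be ignored. Likewise the exponents are of product type, e.g.\ $x_2y_2-x_1y_1$ and $(y_2-y_1)x_1+(x_2-x_1)y_1$, not of the one-dimensional form $e^{-a(\theta)|x_2-x_1|y}$ you predict; the paper handles them with repeated l'H\^opital arguments and an auxiliary comparison term, plus a separate decomposition (Step 3 of Lemma \ref{lem:25}) for the case $(s,t)\neq(0,0)$, which your sketch does not address. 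Finally, a small slip: for the imaginary part you need the product identity $\sin A\sin B=\tfrac12[\cos(A-B)-\cos(A+B)]$, not $\sin^2 A=\tfrac12(1-\cos 2A)$, since two distinct field values are involved.
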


In the sequel, we will need to compute some limits, and we will use
l'H\^opital's rule in its usual form. However, sometimes it may be
long and tedious to check whether we are under the hypotheses of
l'H\^opital's theorem. The following lemma is a version of this
result which makes things easier. Its proof is an easy application
of the mean value theorem.
\begin{lemma}
    Suppose that $f:[M,\infty)\rightarrow \mathbb R$ is a derivable
    function,
    such that $f'$ is continuous on $[M,\infty)$, $M\ge 0$,
    and assume that
    $$\lim_{u\to\infty}f'(u)=a<\infty.$$
    Then,
    $$\lim_{u\to\infty}\frac{f(u)}{u}=a.$$
\end{lemma}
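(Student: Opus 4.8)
The plan is to deduce the statement directly from the mean value theorem, treating it as a ``L'H\^opital at infinity'' fact. The only genuine subtlety is that a single application of the mean value theorem on the whole interval $[M,u]$ produces an intermediate point $\xi$ that need not tend to infinity as $u\to\infty$; hence one cannot a priori control $f'(\xi)$ by the hypothesis $\lim_{u\to\infty}f'(u)=a$. I would circumvent this by first pushing the base point of the interval out to a large value $N$ before invoking the theorem, so that the relevant intermediate point is automatically beyond the threshold where $f'$ is close to $a$.

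Concretely, I would fix $\varepsilon>0$. Since $\lim_{u\to\infty}f'(u)=a$, there exists $N\ge M$ such that $|f'(v)-a|<\varepsilon$ for every $v\ge N$. For any $u>N$, the mean value theorem applied to $f$ on $[N,u]$ yields a point $\xi=\xi(u)\in(N,u)$ with
\[
f(u)-f(N)=f'(\xi)\,(u-N).
\]
Because $\xi>N$, the choice of $N$ guarantees $|f'(\xi)-a|<\varepsilon$, and therefore
\[
\left|\frac{f(u)-f(N)}{u-N}-a\right|<\varepsilon .
\]

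From here the conclusion follows by rewriting the quotient $f(u)/u$ in terms of this controlled difference quotient, namely
\[
\frac{f(u)}{u}=\frac{f(N)}{u}+\frac{f(u)-f(N)}{u-N}\cdot\frac{u-N}{u},
\]
and letting $u\to\infty$ with $N$ (hence $\varepsilon$) fixed. The term $f(N)/u$ tends to $0$, the factor $(u-N)/u$ tends to $1$, while the middle difference quotient stays within $\varepsilon$ of $a$; consequently $\limsup_{u\to\infty}|f(u)/u-a|\le\varepsilon$. Since $\varepsilon>0$ was arbitrary, this forces $\lim_{u\to\infty}f(u)/u=a$, as claimed. I note that the continuity of $f'$ is not really needed beyond differentiability of $f$: the whole argument rests on the uniform smallness of $f'-a$ past the threshold $N$, which is exactly what the mean value theorem converts into control of the increment $f(u)-f(N)$.
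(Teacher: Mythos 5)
Your proof is correct and follows exactly the route the paper indicates: the paper gives no written proof beyond remarking that the lemma ``is an easy application of the mean value theorem,'' and your argument (pushing the base point to a threshold $N$ where $|f'-a|<\varepsilon$, applying the mean value theorem on $[N,u]$, and decomposing $f(u)/u$ accordingly) is precisely that application, carried out with the right care about the location of the intermediate point. Your side remark that continuity of $f'$ is superfluous is also accurate.
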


The proof of Proposition \ref{prop:5} is based on the following
lemma.

\begin{lemma}\label{lem:9}
Let $X_\ep=\{X_\ep(s,t);\, (s,t)\in [0,S]\times [0,T]\}$ be the
(complex-valued) random field defined in \eqref{eq:1} and
$\{\F^\ep_{s,t};\, (s,t)\in [0,S]\times [0,T]\}$ its natural
filtration. Then, for all $(0,0)< (s,t)\leq (s',t')\leq (S,T)$,
\begin{equation}\label{eq:11}
\lim_{\ep\to 0} \E [\Delta_{s,t} X_\ep(s',t')
|\F^\ep_{S,t}\vee\F^\ep_{s,T}]=0,
\end{equation}
where the limit is understood in $L^2(\Omega)$.
\end{lemma}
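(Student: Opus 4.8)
The plan is to show that the conditional expectation in \eqref{eq:11}, as a random variable, converges to zero in $L^2(\Omega)$. The key observation is that the increment $\Delta_{s,t}X_\ep(s',t')$ is an integral of $e^{i\theta L(x,y)}$ over the rectangle $R_\ep:=[\tfrac{s}{\ep},\tfrac{s'}{\ep}]\times[\tfrac{t}{\ep},\tfrac{t'}{\ep}]$, and this rectangle lies in the region that is ``above and to the right'' of the conditioning coordinates in the appropriate sense. Concretely, I would first write
\[
\Delta_{s,t}X_\ep(s',t')=\ep K\int_{\frac{t}{\ep}}^{\frac{t'}{\ep}}\int_{\frac{s}{\ep}}^{\frac{s'}{\ep}}\sqrt{xy}\,e^{i\theta L(x,y)}\,dx\,dy,
\]
and then bring the conditional expectation inside the (deterministic) $dx\,dy$ integral by Fubini, so that the problem reduces to controlling
\[
\E\big[e^{i\theta L(x,y)}\,\big|\,\F^\ep_{S,t}\vee\F^\ep_{s,T}\big]
\]
for each fixed $(x,y)\in R_\ep$.

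Next I would exploit the independent-increments structure of the L\'evy sheet. Writing $L(x,y)=\Delta_{0,0}L(x,y)$ and decomposing the rectangle $[0,x]\times[0,y]$ into the part measurable with respect to $\F^\ep_{S,t}\vee\F^\ep_{s,T}$ (the ``L''-shaped region with $x$-coordinate below $\tfrac{s}{\ep}$ or $y$-coordinate below $\tfrac{t}{\ep}$) and its complement (the upper-right rectangle $[\tfrac{s}{\ep},x]\times[\tfrac{t}{\ep},y]$, which is independent of the conditioning $\sigma$-field), the conditional expectation factors: the measurable part comes out as a unit-modulus phase, while the independent part contributes $\E[e^{i\theta \Delta_Q L}]=e^{-\lambda(Q)\Psi(\theta)}$ by \eqref{eq:999}, where $Q=[\tfrac{s}{\ep},x]\times[\tfrac{t}{\ep},y]$. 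Taking modulus, this gives $\big|\E[e^{i\theta L(x,y)}|\F^\ep_{S,t}\vee\F^\ep_{s,T}]\big|=e^{-\lambda(Q)a(\theta)}$, with $\lambda(Q)=(x-\tfrac{s}{\ep})(y-\tfrac{t}{\ep})$.

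With this pointwise bound in hand, the $L^2(\Omega)$ norm of the conditional expectation is controlled by a purely deterministic integral: up to the constant $\ep K$, it is at most
\[
\ep K\int_{\frac{t}{\ep}}^{\frac{t'}{\ep}}\int_{\frac{s}{\ep}}^{\frac{s'}{\ep}}\sqrt{xy}\;e^{-a(\theta)(x-\frac{s}{\ep})(y-\frac{t}{\ep})}\,dx\,dy,
\]
and I would show this tends to zero as $\ep\to 0$. The decaying exponential localizes the integrand near the corner $(\tfrac{s}{\ep},\tfrac{t}{\ep})$, so after the change of variables $u=x-\tfrac{s}{\ep}$, $v=y-\tfrac{t}{\ep}$ the factor $\sqrt{xy}$ is of order $\tfrac{1}{\ep}$ on the effective support, while the integral of $e^{-a(\theta)uv}$ over the first quadrant is only logarithmically divergent at the origin and is otherwise finite; balancing the prefactor $\ep$ against these growths yields a bound that vanishes as $\ep\to 0$. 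The main obstacle I anticipate is precisely this last estimate: the product structure $e^{-a(\theta)uv}$ is not integrable over the whole quadrant (it behaves like $\int u^{-1}\,du$ after integrating one variable), so the decay must be extracted carefully from the finite integration limits and the $\ep$-dependent scaling, rather than from integrability alone; the factor $a(\theta)>0$ (guaranteed by the hypothesis $a(\theta)\neq 0$) is what makes the exponential genuinely contract toward the corner and drives the limit to zero.
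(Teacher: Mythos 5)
Your setup is sound and coincides with the first step of the paper's own proof: decomposing $L(x,y)$ into the part measurable with respect to the L-shaped region plus the independent increment $\Delta_{\frac{s}{\ep},\frac{t}{\ep}}L(x,y)$, and using \eqref{eq:999}, one indeed gets
\[
Y_\ep:=\E\big[\Delta_{s,t}X_\ep(s',t')\,\big|\,\F^\ep_{S,t}\vee\F^\ep_{s,T}\big]
=\ep K\int_{\frac{t}{\ep}}^{\frac{t'}{\ep}}\int_{\frac{s}{\ep}}^{\frac{s'}{\ep}}\sqrt{xy}\,
e^{i\theta\left(L(\frac{s}{\ep},y)+L(x,\frac{t}{\ep})-L(\frac{s}{\ep},\frac{t}{\ep})\right)}
e^{-\Psi(\theta)\left(x-\frac{s}{\ep}\right)\left(y-\frac{t}{\ep}\right)}dx\,dy.
\]
The genuine gap is your final step: bounding $|Y_\ep|$ by putting the modulus inside the $dx\,dy$ integral discards the $(x,y)$-dependent random phase, and the resulting deterministic bound does \emph{not} vanish --- it diverges. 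With $u=x-\frac{s}{\ep}$, $v=y-\frac{t}{\ep}$, $U=\frac{s'-s}{\ep}$, $V=\frac{t'-t}{\ep}$, and using $\sqrt{xy}\geq\frac{\sqrt{st}}{\ep}$ on the whole domain (recall $s,t>0$ here), one has
\[
\ep\int_{\frac{t}{\ep}}^{\frac{t'}{\ep}}\int_{\frac{s}{\ep}}^{\frac{s'}{\ep}}\sqrt{xy}\,
e^{-a(\theta)(x-\frac{s}{\ep})(y-\frac{t}{\ep})}dx\,dy
\;\geq\;\sqrt{st}\int_0^V\int_0^U e^{-a(\theta)uv}\,du\,dv
\;\geq\;\frac{1-e^{-a(\theta)}}{a(\theta)}\,\sqrt{st}\,\ln(UV),
\]
since $\int_0^V e^{-a(\theta)uv}dv\geq\frac{1-e^{-a(\theta)}}{a(\theta)u}$ for $u\geq 1/V$. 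So your bound grows like $\sqrt{st}\,\ln(\ep^{-2})$. Note also that you misplace the divergence of $\int\int e^{-a(\theta)uv}\,du\,dv$: near the origin it is harmless; the logarithm comes from the large domain $U,V\sim\ep^{-1}$, and the ``balancing'' fails because the prefactor $\ep$ is exactly consumed by $\sqrt{xy}\sim\ep^{-1}$. (Your scheme would in fact close at $s=t=0$, but the lemma concerns precisely $s,t>0$, where $\sqrt{xy}$ is large on the effective support.)

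What is missing is the cancellation carried by the phase you threw away, and this is exactly how the paper proceeds: instead of a pathwise bound, it expands $\E[|Y_\ep|^2]$ as a fourfold integral over $(x_1,y_1,x_2,y_2)$, and the expectation
$\E\big[e^{i\theta(L(\frac{s}{\ep},y_1)+L(x_1,\frac{t}{\ep})-L(\frac{s}{\ep},y_2)-L(x_2,\frac{t}{\ep}))}\big]$,
computed via the independent rectangular increments of $L$ along the axes between the two evaluation points, contributes the extra decay factors $e^{-a(\theta)|x_2-x_1|\frac{t}{\ep}}$ and $e^{-a(\theta)|y_2-y_1|\frac{s}{\ep}}$. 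These are available only because $s,t>0$; integrating $x_1$ and $y_2$ against them gains a factor of order $\ep^2$, after which the remaining double integral of $e^{-a(\theta)(x_2-\frac{s}{\ep})(y_1-\frac{t}{\ep})}$ is of order $\ln(1/\ep)$, so that $\E[|Y_\ep|^2]=O(\ep^2\ln(1/\ep))\to 0$. In short: the hypothesis $(0,0)<(s,t)$ enters through this axis decay in the second-moment computation, and no estimate at the level of $|Y_\ep|$ pointwise (i.e., after taking the modulus inside the integral) can close, since that quantity genuinely diverges.
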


\begin{proof}
We will use the notation $Y_\ep:=\E [\Delta_{s,t} X_\ep(s',t')
|\F_{S,t}\vee\F_{s,T}]$. First, note that we can write
\[
\Delta_{s,t} X_\ep(s',t')= \ep
K\int_{\frac{t}{\ep}}^{\frac{t'}{\ep}}\int_{\frac{s}{\ep}}^{\frac{s'}{\ep}}\sqrt{xy}
\, e^{i\theta
\left(L(\frac{s}{\ep},y)+L(x,\frac{t}{\ep})-L(\frac{s}{\ep},\frac{t}{\ep})+
\Delta_{\frac{s}{\ep},\frac{t}{\ep}}L(x,y)\right)} dxdy.
\]
Thus, we have
\begin{align*}
Y_\ep & = \ep K\int_{\frac{t}{\ep}}^{\frac{t'}{\ep}}
\int_{\frac{s}{\ep}}^{\frac{s'}{\ep}}\sqrt{xy} \, e^{i\theta
\left(L(\frac{s}{\ep},y)+L(x,\frac{t}{\ep})-L(\frac{s}{\ep},\frac{t}{\ep})\right)}
\mathbb{E}\left[e^{i\theta \Delta_{\frac{s}{\ep},\frac{t}{\ep}}L(x,y)}\right] dxdy\\
&=\ep
K\int_{\frac{t}{\ep}}^{\frac{t'}{\ep}}\int_{\frac{s}{\ep}}^{\frac{s'}{\ep}}
\sqrt{xy} \, e^{i\theta
\left(L(\frac{s}{\ep},y)+L(x,\frac{t}{\ep})-L(\frac{s}{\ep},\frac{t}{\ep})\right)}
e^{-\Psi(\theta)\left(x-\frac{s}{\ep}\right)\left(y-\frac{t}{\ep}\right)}dxdy
\end{align*}
and, recalling that $\Psi(\xi)=a(\xi)+ib(\xi)$ is the L\'evy
exponent,
\begin{align*}
\E\big[|Y_\ep|^2\big] & = \ep^2 K^2
\int_{\frac{t}{\ep}}^{\frac{t'}{\ep}}
\int_{\frac{t}{\ep}}^{\frac{t'}{\ep}}
\int_{\frac{s}{\ep}}^{\frac{s'}{\ep}}
\int_{\frac{s}{\ep}}^{\frac{s'}{\ep}} \sqrt{x_1 x_2} \sqrt{y_1 y_2}
\, \E\left[ e^{i\theta \left(L(\frac s\ep,y_1) + L(x_1,\frac t\ep) -
L(\frac s\ep,y_2) -  L(x_2,\frac t\ep) \right)}\right]\\
& \qquad \times  e^{-a(\theta) \left((x_1-\frac s\ep)(y_1-\frac
t\ep) + (x_2-\frac s\ep)(y_2-\frac t\ep)\right)}  e^{-i b(\theta)
\left((x_1-\frac s\ep)(y_1-\frac t\ep) - (x_2-\frac s\ep)(y_2-\frac
t\ep)\right)} dx_1 dx_ 2 dy_1 dy_2.
\end{align*}
In order to compute the expectation inside the integral, we take
into account the possible orders of $x_1,x_2$ and $y_1,y_2$,
respectively, which amounts to consider 4 possibilities. Then, in
each case we express the exponent of the complex exponential in the
above expectation as a suitable combination of rectangular
increments of $L$, so that we can compute the corresponding
expectation thanks to \eqref{eq:999}. In the resulting four terms,
we get rid of the complex exponentials simply by applying the
modulus' triangle inequality and putting the modulus inside the
integrals. Using this procedure, we end up with
\begin{equation}\label{eq:13}
\E\big[|Y_\ep|^2\big]\leq 2(I_1+I_2),
\end{equation}
where
\begin{align*}
I_1 & = \ep^2 K^2 \int_{\frac{t}{\ep}}^{\frac{t'}{\ep}}
\int_{\frac{t}{\ep}}^{y_2} \int_{\frac{s}{\ep}}^{\frac{s'}{\ep}}
\int_{\frac{s}{\ep}}^{x_2} \sqrt{x_1 x_2} \sqrt{y_1 y_2} \,
e^{-a(\theta) \left( (x_2-x_1)\frac t\ep + (y_2-y_1)\frac s\ep \right)} \\
& \qquad \qquad \times e^{-a(\theta) \left((x_1-\frac
s\ep)(y_1-\frac t\ep) + (x_2-\frac s\ep)(y_2-\frac t\ep)\right)}
dx_1 dx_ 2 dy_1 dy_2
\end{align*}
and
\begin{align*}
I_2 & = \ep^2 K^2 \int_{\frac{t}{\ep}}^{\frac{t'}{\ep}}
\int_{\frac{t}{\ep}}^{y_2} \int_{\frac{s}{\ep}}^{\frac{s'}{\ep}}
\int_{\frac{s}{\ep}}^{x_1} \sqrt{x_1 x_2} \sqrt{y_1 y_2} \,
e^{-a(\theta) \left( (x_1-x_2)\frac t\ep + (y_2-y_1)\frac s\ep \right)} \\
& \qquad \qquad \times e^{-a(\theta) \left((x_1-\frac
s\ep)(y_1-\frac t\ep) + (x_2-\frac s\ep)(y_2-\frac t\ep)\right)}
dx_2 dx_ 1 dy_1 dy_2.
\end{align*}
Applying Fubini theorem, one easily verifies that both $I_1$ and
$I_2$ can be bounded by the term
\[
I = \ep^2 K^2 \int_{\frac{t}{\ep}}^{\frac{t'}{\ep}}
\int_{y_1}^{\frac{t'}{\ep}} \int_{\frac{s}{\ep}}^{\frac{s'}{\ep}}
\int_{\frac{s}{\ep}}^{x_2} \sqrt{x_1 x_2} \sqrt{y_1 y_2} \,
e^{-a(\theta) \left( (x_2-x_1)\frac t\ep + (y_2-y_1)\frac s\ep
\right)}   e^{-a(\theta) (x_2-\frac s\ep)(y_1-\frac t\ep)} dx_1 dx_
2 dy_2 dy_1.
\]
This implies that $\E\big[|Y_\ep|^2\big]\leq 4 I$. Let us check that
$I$ converges to zero as $\ep\to 0$. Indeed, bounding $\sqrt{x_1}$
by $\sqrt{x_2}$ and $\sqrt{y_2}$ by $\sqrt{\frac{t'}{\ep}}$, then
integrating with respect to $x_1$ and $y_2$, and finally applying a
change of variables, we can infer that
\begin{align*}
I & \leq C \ep \sqrt{\ep} \int_{\frac{t}{\ep}}^{\frac{t'}{\ep}}
\int_{y_1}^{\frac{t'}{\ep}} \int_{\frac{s}{\ep}}^{\frac{s'}{\ep}}
\int_{\frac{s}{\ep}}^{x_2} x_2 \sqrt{y_1} \, e^{-a(\theta) \left(
(x_2-x_1)\frac t\ep + (y_2-y_1)\frac s\ep \right)} e^{-a(\theta)
(x_2-\frac s\ep)(y_1-\frac t\ep)} dx_1 dx_ 2 dy_2 dy_1\\
& \leq C \ep^3 \sqrt{\ep} \int_{\frac{t}{\ep}}^{\frac{t'}{\ep}}
\int_{\frac{s}{\ep}}^{\frac{s'}{\ep}}
 x_2 \sqrt{y_1} \, e^{-a(\theta)
(x_2-\frac s\ep)(y_1-\frac t\ep)} dx_2 dy_1\\
& = C \ep^3 \sqrt{\ep} \int_{0}^{\frac{t'-t}{\ep}}
\int_{0}^{\frac{s'-s}{\ep}}
 \left(x_2+\frac s\ep\right)  \sqrt{y_1+\frac t\ep} \; e^{-a(\theta)
x_2 y_1} dx_2 dy_1\\
& \leq C \ep^2 \int_{0}^{\frac{t'-t}{\ep}}
\int_{0}^{\frac{s'-s}{\ep}} e^{-a(\theta) x_2 y_1} dx_2 dy_1,
\end{align*}
where $C$ is some positive constant whose value may change from line
to line. The latter expression converges to zero as $\ep\to 0$. In
order to see this, e.g., one splits the integral with respect to
$x_2$ on the intervals $(0,\ep)$ and $(\ep,\frac{s'-s}{\ep})$. In
$(0,\ep)$ one bounds the exponential by 1 and then integrate, while
in $(\ep,\frac{s'-s}{\ep})$ one first integrates with respect to
$y_1$ and then with respect to $x_2$. This concludes the proof.
\end{proof}

We are now in position to prove Proposition \ref{prop:5}:

{\it{Proof of Proposition \ref{prop:5}.}} First, we check that
$X(s,0)=X(0,t)=0$, for all $(s,t)\in [0,S]\times [0,T]$. By
Skorohod's representation theorem, there exist a probability space
$(\tilde{\Omega}, \tilde{\mathcal{F}}, \tilde{\mathbb{P}})$, a
sequence $\{Y_\ep\}_{\ep>0}$ and a random variable $Y$, all of them
taking values in $\C([0,S]\times [0,T])$, such that
\begin{enumerate}
\item[(a)] For all $\ep>0$, $Y_\ep$ and $X_\ep$ have the same law,
\item[(b)] $Y$ and $X$ have the same law,
\item[(c)] $Y_\ep$ converges to $Y$, as $\ep\to 0$,
$\tilde{\mathbb{P}}$-almost surely.
\end{enumerate}
Condition (c) means that
\[
\lim_{\ep\to 0 } \sup_{(s,t)\in [0,S]\times [0,T]}
|Y_\ep(s,t)-Y(s,t)|=0, \quad \tilde{\mathbb{P}}\text{-a.s.}
\]
In particular, for any $t\in [0,T]$, $\lim_{\ep\to 0}
Y_\ep(0,t)=Y(0,t)$, $\tilde{\mathbb{P}}$-a.s. By (a), it holds
$\tilde{\mathbb{P}}(Y_\ep(0,t)=0)=\mathbb{P}_\ep(X_\ep(0,t)=0)=1$.
Hence, letting $\ep\to 0$, we get $\tilde{\mathbb{P}}(Y(0,t)=0)=1$.
By (b), we end up with $\mathbb{P}(X(0,t)=0)=1$. The same argument
let us conclude that $\mathbb{P}(X(s,0)=0)=1$, for all $s\in [0,S]$.

 The remaining of the proof is similar to that of \cite[Prop. 4.2]{BJ97}. Let $(0,0)<
(s,t)<(s',t')\leq (S,T)$. It suffices to prove that, for any $n\geq
1$ and $(s_1,t_1),\dots,(s_n,t_n)$ such that either $s_i\leq S$ and
$t_i\leq t$, or $s_i\leq s$ and $t_i\leq T$, $i=1,\dots,n$, and for
any continuous and bounded function $\varphi:\bC^n\to\R$, it holds
that
\[
\left|\E_\P [\varphi(X(s_1,t_1),\dots,X(s_n,t_n)) \Delta_{s,t}
X(s',t')]\right|=0.
\]
We recall that the notation $|z|$ stands for the modulus of $z\in
\bC$. Without any loss of generality, the converging subsequence of
probability measures to $\P$ will be simply denoted by
$\{\P_\ep\}_{\ep>0}$. Thus, by Proposition \ref{prop:2}, it suffices
to check that
\[
\lim_{\ep\to 0} \left|\E_{\P_\ep}
[\varphi(X(s_1,t_1),\dots,X(s_n,t_n)) \Delta_{s,t}
X(s',t')]\right|=0.
\]
For this, we recall that, as in the statement of Lemma \ref{lem:9},
$\{\F^\ep_{s,t};\, (s,t)\in [0,S]\times [0,T]\}$ is the natural
filtration associated to the (complex-valued) random field $X^\ep$
introduced in \eqref{eq:1}. Then, we can argue as follows:
\begin{align*}
& \left|\E_{\P_\ep} [\varphi(X(s_1,t_1),\dots,X(s_n,t_n))
\Delta_{s,t} X(s',t')]\right| \\
& \qquad = \left|\E [\varphi(X_\ep(s_1,t_1),\dots,X_\ep(s_n,t_n))
\Delta_{s,t}
X_\ep(s',t')]\right|\\
& \qquad \leq \left|\E
\left[\varphi(X_\ep(s_1,t_1),\dots,X_\ep(s_n,t_n)) \, \E
[\Delta_{s,t}
X_\ep(s',t')|\F^\ep_{S,t}\vee\F^\ep_{s,T}]\right]\right|\\
& \qquad \leq C \left( \E\left[ \left|\E [\Delta_{s,t}
X_\ep(s',t')|\F^\ep_{S,t}\vee\F^\ep_{s,T}]\right|^2\right]\right)^\frac12.
\end{align*}
The latter term converges to zero as $\ep\to 0$, by Lemma
\ref{lem:9}. \qed

\medskip

In order to prove Proposition \ref{prop:6}, we need two auxiliary results. The first one is the following.

\begin{lemma}\label{lem:25}
 For any $(0,0)\leq (s,t)\leq (s',t')\leq (S,T)$, it holds:
 \[
  \lim_{\ep\to 0} \E\big[ |\Delta_{s,t} X_\ep(s',t')|^2\big] = 2(s'-s)(t'-t).
 \]
\end{lemma}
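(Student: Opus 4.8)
The plan is to expand $\E[|\Delta_{s,t}X_\ep(s',t')|^2]$ into an explicit four-fold integral against the characteristic function of increments of $L$, to evaluate that characteristic function via the L\'evy-sheet structure, and then to isolate the leading term by a rescaling and dominated-convergence argument. Writing $\Delta_{s,t}X_\ep(s',t')=\ep K\int_{t/\ep}^{t'/\ep}\int_{s/\ep}^{s'/\ep}\sqrt{xy}\,e^{i\theta L(x,y)}\,dx\,dy$ as in the proof of Proposition \ref{prop:2}, and using $|z|^2=z\bar z$ together with Fubini, I would first obtain
\[
\E\big[|\Delta_{s,t}X_\ep(s',t')|^2\big]=\ep^2K^2\int_{[\frac t\ep,\frac{t'}\ep]^2}\int_{[\frac s\ep,\frac{s'}\ep]^2}\sqrt{x_1x_2y_1y_2}\,\E\big[e^{i\theta(L(x_1,y_1)-L(x_2,y_2))}\big]\,dx_1dx_2\,dy_1dy_2 .
\]

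Next I would evaluate the inner expectation by splitting the domain into the four orderings of $(x_1,x_2)$ and of $(y_1,y_2)$. In each case $L(x_1,y_1)-L(x_2,y_2)$ is a signed sum of increments of $L$ over \emph{disjoint} rectangles of a common refinement, so by independence and \eqref{eq:999} the expectation factorizes. With $\Psi(-\theta)=a(\theta)-ib(\theta)=\overline{\Psi(\theta)}$, the two nested cases ($x_1<x_2,\,y_1<y_2$ and its mirror) give $e^{-(x_2y_2-x_1y_1)\overline{\Psi(\theta)}}$ and its conjugate, while the two crossing cases ($x_1<x_2,\,y_2<y_1$ and its mirror) give $e^{-x_1(y_1-y_2)\Psi(\theta)-(x_2-x_1)y_2\overline{\Psi(\theta)}}$ and its conjugate.

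To take the limit I would, in each case, change to difference variables together with the smaller coordinates, and then rescale the latter by setting $x_1=\xi/\ep$, $y_1=\eta/\ep$ with $(\xi,\eta)\in[s,s']\times[t,t']$, so that the prefactor $\ep^2$ is absorbed and the outer integral becomes $\int_s^{s'}\int_t^{t'}d\xi\,d\eta$. In the nested case the exponent becomes $\overline{\Psi(\theta)}\,(uy_1+x_1v+uv)$ with $u,v\ge0$; substituting $p=uy_1$, $q=vx_1$, the normalized square-root factor $\sqrt{x_1x_2y_1y_2}/(x_1y_1)$ and the cross term $uv$ tend to $1$ and $0$, the $(p,q)$-domain expands to $[0,\infty)^2$, and the inner integral converges to $\int_0^\infty\!\int_0^\infty e^{-(p+q)\overline{\Psi(\theta)}}dp\,dq=\overline{\Psi(\theta)}^{-2}$; the crossing case yields $\Psi(\theta)^{-1}\overline{\Psi(\theta)}^{-1}=|\Psi(\theta)|^{-2}$. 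Summing the four contributions, each carrying the factor $(s'-s)(t'-t)$, and inserting $K^2=\tfrac12(a(\theta)^2+b(\theta)^2)^2/a(\theta)^2$, gives
\[
K^2\Big(\tfrac1{\Psi(\theta)^2}+\tfrac1{\overline{\Psi(\theta)}^2}+\tfrac2{|\Psi(\theta)|^2}\Big)=\frac{(a(\theta)+ib(\theta))^2+(a(\theta)-ib(\theta))^2}{2a(\theta)^2}+\frac{a(\theta)^2+b(\theta)^2}{a(\theta)^2}=2 ,
\]
so the full limit is $2(s'-s)(t'-t)$, exactly as claimed. Here the hypothesis $a(\theta)\ne0$ (whence $a(\theta)>0$, since $a\ge0$) is what makes the exponential integrals converge.

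The delicate point, and the main obstacle, is the rigorous passage to the limit, i.e. justifying dominated convergence after rescaling. The rescaled integrand converges pointwise to the stated kernel because $\ep^2/(\xi\eta)\to0$; the issue is a uniform dominating bound, and the only nontrivial factors are the square roots. These I would control by $\sqrt{1+x}\le1+\sqrt{x}$, which bounds the normalized square root by $1+\ep\sqrt{p}/\sqrt{\xi\eta}+\cdots$; after integration against $e^{-(p+q)a(\theta)}$ the leading part produces the constant and the remainder carries an explicit factor $\ep$ together with the integrable singularities $\xi^{-1/2},\eta^{-1/2}$, hence integrates to $O(\ep)$ over the box $[s,s']\times[t,t']$ and vanishes. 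This sharper estimate also accommodates the boundary values $s=0$ and/or $t=0$, where pulling the supremum out of the square root would fail.
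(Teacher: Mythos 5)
Your proposal is correct in substance and arrives at the right constant, but after the (shared) first step it follows a genuinely different route from the paper. Both arguments open identically: expand $|\cdot|^2$, apply Fubini, split into the four orderings of $(x_1,x_2)$ and $(y_1,y_2)$, and evaluate the characteristic function of the resulting disjoint rectangular increments via \eqref{eq:999}; your nested/crossing exponents are exactly the four terms of \eqref{eq:585}, which the paper regroups into the two cosine integrals $I_1^\ep$ and $I_2^\ep$ of \eqref{eq:52} and \eqref{eq:23}. From there the paper first treats the corner case $s=t=0$ by iterated changes of variables and repeated applications of l'H\^opital's rule (through its mean-value lemma), and for $I_2^\ep$ it needs the auxiliary-term device ($C_u$ versus $\tilde C_u$, with the sandwich through $D_u$, $D_u^1$, $D_u^2$); general $(s,t)$ is then recovered in a separate step by writing each $I_j^\ep$ as a rectangular increment of a corner integral $F^\ep$, $G^\ep$ minus boundary strips that are shown to vanish. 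You instead rescale once and for all ($x_1=\xi/\ep$, $y_1=\eta/\ep$, $p=uy_1$, $q=vx_1$), so each ordering collapses to an explicit limit $\int_0^\infty\!\int_0^\infty e^{-(p\alpha+q\beta)}dp\,dq$ with $\alpha,\beta\in\{\Psi(\theta),\overline{\Psi(\theta)}\}$, the outer variables living directly on $[s,s']\times[t,t']$. Your final identity $K^2\big(\Psi(\theta)^{-2}+\overline{\Psi(\theta)}^{-2}+2|\Psi(\theta)|^{-2}\big)=2$ is consistent with the paper's limits $\lim I_1^\ep = s't'K^2\frac{a(\theta)^2-b(\theta)^2}{(a(\theta)^2+b(\theta)^2)^2}$ and $\lim I_2^\ep = \frac{s't'K^2}{a(\theta)^2+b(\theta)^2}$. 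Your route buys uniformity: no l'H\^opital, no corner-versus-general case split, no auxiliary sandwich, and all four orderings handled by one mechanism; what the paper's route buys is that every integral along the way is an elementary one-dimensional computation with no domination argument to justify.

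One quantitative claim in your domination step needs repair, precisely at the boundary cases $s=0$ and/or $t=0$ that the lemma includes and that you single out as the point of your sharper estimate. Bounding the normalized square root by $(1+\sqrt{A})(1+\sqrt{B})$ with $A=p\ep^2/(\xi\eta)$, $B=q\ep^2/(\xi\eta)$ produces, besides the terms with the integrable singularities $\xi^{-1/2}$, $\eta^{-1/2}$ that you mention, the cross term $\sqrt{AB}=\ep^2\sqrt{pq}/(\xi\eta)$; after the $(p,q)$-integration this leaves $C\ep^2/(\xi\eta)$, and $\int_0^{s'}\xi^{-1}\,d\xi$ diverges, so as stated the remainder bound does not close when $s=0$ or $t=0$. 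The fix is routine and within your own toolkit: on the integration domain one has $A\leq s'/\xi$, so interpolating $\sqrt{A}\leq (s'/\xi)^{1/4}(p\ep^2/(\xi\eta))^{1/4}$ (and symmetrically for $B$) turns the cross term into $C\ep\,(pq)^{1/4}\xi^{-3/4}\eta^{-3/4}$, which is integrable in $(\xi,\eta)$ and $O(\ep)$ overall. A similar touch is needed in the crossing case when $t=0$: there the exponential has modulus $e^{-a(\theta)(q+p(1-q\ep^2/(\xi\eta)))}$ and admits no single $\ep$-free dominating function on the growing domain, but splitting at $q\leq\xi\eta/(2\ep^2)$ (where $e^{-a(\theta)(q+p/2)}$ dominates) leaves a complementary piece of size $O(\ep^{-2}e^{-c/\ep^2})$, which is negligible. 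With these two adjustments your argument is complete and, in my view, cleaner than the paper's.
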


\begin{proof} We split the proof in three steps.

{\it{Step 1.}}
 Owing to the definition of $X_\ep$ (see \eqref{eq:1}) and applying Fubini theorem, we have
 \begin{align*}
 & \E\big[ |\Delta_{s,t} X_\ep(s',t')|^2\big]\\
 & \qquad =
  \ep^2 K^2 \int_{\frac{t}{\ep}}^{\frac{t'}{\ep}}\int_{\frac{t}{\ep}}^{\frac{t'}{\ep}}
    \int_{\frac{s}{\ep}}^{\frac{s'}{\ep}} \int_{\frac{s}{\ep}}^{\frac{s'}{\ep}}
    \sqrt{x_1x_2}\sqrt{y_1y_2}\,\, \mathbb{E}\left[ e^{i\theta \big(\Delta_{0,0}L(x_2,y_2)-\Delta_{0,0}L(x_1,y_1)\big)}
    \right]dx_1dx_2dy_1dy_2.
 \end{align*}
As in the proof of Lemma \ref{lem:9}, we need to take into account the possible orders of $x_1, x_2$ and
$y_1, y_2$, respectively. Then, applying also some suitable changes of variables, we have
\begin{align}
 & \E\big[ |\Delta_{s,t} X_\ep(s',t')|^2\big]\nonumber \\
 & \qquad =
  \ep^2 K^2 \int_{\frac{t}{\ep}}^{\frac{t'}{\ep}}
  \int_{\frac{t}{\ep}}^{y_2} \int_{\frac{s}{\ep}}^{\frac{s'}{\ep}}
  \int_{\frac{s}{\ep}}^{x_2} \sqrt{x_1 x_2 y_1 y_2}\,
  e^{-\Psi(\theta)\left((y_2-y_1)x_1+(x_2-x_1)y_2\right)}dx_1dx_2dy_1dy_2\nonumber \\
  & \qquad \quad +
  \ep^2 K^2 \int_{\frac{t}{\ep}}^{\frac{t'}{\ep}}
  \int_{\frac{t}{\ep}}^{y_2} \int_{\frac{s}{\ep}}^{\frac{s'}{\ep}}
  \int_{\frac{s}{\ep}}^{x_2} \sqrt{x_1 x_2 y_1 y_2}\,
  e^{-\Psi(\theta)(x_2-x_1)y_1} e^{-\Psi(-\theta)(y_2-y_1)x_1}dx_1dx_2dy_1dy_2\nonumber \\
  & \qquad \quad +
  \ep^2 K^2 \int_{\frac{t}{\ep}}^{\frac{t'}{\ep}}
  \int_{\frac{t}{\ep}}^{y_2} \int_{\frac{s}{\ep}}^{\frac{s'}{\ep}}
  \int_{\frac{s}{\ep}}^{x_2} \sqrt{x_1 x_2 y_1 y_2}\,
  e^{-\Psi(-\theta)(x_2-x_1)y_1} e^{-\Psi(\theta)(y_2-y_1)x_1}dx_1dx_2dy_1dy_2\nonumber \\
  & \qquad \quad +
  \ep^2 K^2 \int_{\frac{t}{\ep}}^{\frac{t'}{\ep}}
  \int_{\frac{t}{\ep}}^{y_2} \int_{\frac{s}{\ep}}^{\frac{s'}{\ep}}
  \int_{\frac{s}{\ep}}^{x_2} \sqrt{x_1 x_2 y_1 y_2}\,
  e^{-\Psi(-\theta)\left((y_2-y_1)x_1+(x_2-x_1)y_2\right)}dx_1dx_2dy_1dy_2.
  \label{eq:585}
\end{align}
Recalling that $\Psi(\theta)=a(\theta)+ib(\theta)$, where
$a(\theta)=a(-\theta)$ and $b(\theta)=-b(-\theta)$, we observe that
\begin{align*}
 & e^{-\Psi(\theta)\left((y_2-y_1)x_1+(x_2-x_1)y_2\right)}+e^{-\Psi(\theta)(x_2-x_1)y_1}e^{-\Psi(-\theta)(y_2-y_1)x_1}\\
 & \qquad \quad + e^{-\Psi(-\theta)(x_2-x_1)y_1}e^{-\Psi(\theta)(y_2-y_1)x_1}+
 e^{-\Psi(-\theta)\left((y_2-y_1)x_1+(x_2-x_1)y_2\right)}\\
 & \quad = e^{-a(\theta)\left(x_2y_2-x_1y_1\right)}2 \cos \left(b(\theta)(x_2y_2-x_1y_1)\right)\\
 & \qquad \qquad
 +e^{-a(\theta)\left((y_2-y_1)x_1+(x_2-x_1)y_1\right)} 2 \cos\left(b(\theta)((y_2-y_1)x_1{\color{red}{-}}(x_2-x_1)y_1)\right).
\end{align*}
As a consequence, we can infer that
\begin{equation}\label{eq:51}
 \E\big[ |\Delta_{s,t} X_\ep(s',t')|^2\big]= 2(I^\ep_1+I^\ep_2),
\end{equation}
where
\begin{equation}\label{eq:52}
 I^\ep_1 = \ep^2 K^2 \int_{\frac{t}{\ep}}^{\frac{t'}{\ep}}
  \int_{\frac{t}{\ep}}^{y_2} \int_{\frac{s}{\ep}}^{\frac{s'}{\ep}}
  \int_{\frac{s}{\ep}}^{x_2} \sqrt{x_1 x_2 y_1 y_2}\, e^{-a(\theta)\left(x_2y_2-x_1y_1\right)} \cos \left(b(\theta)(x_2y_2-x_1y_1)\right)
  dx_1dx_2dy_1dy_2
\end{equation}
and
\begin{align}
 I_2^\ep & = \ep^2 K^2 \int_{\frac{t}{\ep}}^{\frac{t'}{\ep}}
  \int_{\frac{t}{\ep}}^{y_2} \int_{\frac{s}{\ep}}^{\frac{s'}{\ep}}
  \int_{\frac{s}{\ep}}^{x_2} \sqrt{x_1 x_2 y_1 y_2}\,
  e^{-a(\theta)\left((y_2-y_1)x_1+(x_2-x_1)y_1\right)} \nonumber \\
  & \qquad \qquad \times \cos\left(b(\theta)((y_2-y_1)x_1+(x_2-x_1)y_1)\right)
  dx_1dx_2dy_1dy_2.
  \label{eq:23}
\end{align}

\smallskip

{\it{Step 2.}}
Let us consider the case $s=t=0$. In order to deal with $I_1^\ep$, we make the changes of variables $z_i:=x_i y_i$ and
$v_i:= \frac{\ep}{s'} x_i$, $i=1,2$, and we define $u:=\frac{s' t'}{\ep^2}$. Thus, by l'H\^opital's rule, we have
\[
 \lim_{\ep\to 0} I_1^\ep = \lim_{u\to \infty}
 s't'K^2 \int_0^1 \int_0^{v_2} \int_0^{uv_1} \frac{\sqrt{z_1uv_2}}{v_1} \,
 e^{-a(\theta)uv_2+a(\theta)z_1} \cos\left(b(\theta)(uv_2-z_1)\right) dz_1dv_1dv_2.
\]
Applying now the changes of variables $v_2':=uv_2$ and $v_1':=u v_1$, and again l'H\^opital's rule, we obtain that
the latter limit equals to
\begin{equation}\label{eq:22}
 \lim_{u\to \infty}
 s't'K^2  \int_0^u \int_0^{v_1'} \frac{\sqrt{z_1 u }}{v_1'} \,
 e^{-a(\theta)u + a(\theta)z_1} \cos\left(b(\theta)(u-z_1)\right) dz_1dv_1'.
\end{equation}
In order to compute the above limit, we use the formula
$\cos(\theta)=\frac12 (e^{i\theta}+e^{-i\theta})$. Hence, the
expression inside the limit \eqref{eq:22} can be written as the sum
$\frac12(A_u+B_u)$, where these terms are given by
\[
 A_u:= s't'K^2  \int_0^u \int_0^{v} \frac{\sqrt{z u }}{v} \,
 e^{-a(\theta)u + a(\theta)z} \, e^{i b(\theta)(u-z)} dzdv,
\]
\[
 B_u:= s't'K^2  \int_0^u \int_0^{v} \frac{\sqrt{z u }}{v} \,
 e^{-a(\theta)u + a(\theta)z} \, e^{-i b(\theta)(u-z)} dzdv.
\]
 We will only deal with $\lim_{u\to \infty} B_u$, because $\lim_{u\to \infty} A_u$ can be treated in a similar way.
 Indeed,  rewriting $B_u$ as
 \[
  B_u= s't'K^2  \frac{\int_0^u \int_0^{v} \frac{\sqrt{z}}{v} \,
 e^{(a(\theta) +ib(\theta))z}  dzdv}{u^{-\frac12} \, e^{(a(\theta)+ib(\theta))u}}
 \]
 and applying l'H\^opital's rule twice, one easily proves that
 \[
  \lim_{u\to \infty} B_u= \frac{s' t' K^2}{(a(\theta)+ib(\theta))^2}.
 \]
Similarly, one gets
\[
  \lim_{u\to \infty} A_u= \frac{s' t' K^2}{(a(\theta)-ib(\theta))^2}.
 \]
Thus,
\[
 \lim_{\ep\to 0} I_1^\ep = s' t' K^2 \frac{a(\theta)^2-b(\theta)^2}{(a(\theta)^2+b(\theta)^2)^2}.
\]

Now, we are going to compute  $\lim_{\ep\to0} I_2^\ep$. Recall that
the latter term is given in \eqref{eq:23}. The strategy that we have
followed to deal with $I_1^\ep$ cannot be applied here. More
precisely, we have not been able to compute the limit of $I_2^\ep$
directly, but we will introduce an auxiliary term which will
converge to some quantity, and we will prove that the remainder
converges to zero.

To start with, we apply the same changes of variables that we
performed for $I_1^\ep$, we set $u:=\frac{s't'}{\ep}$ and apply
l'H\^opital's rule, so $\lim_{\ep\to0}I_2^\ep$ equals to
\[
 \lim_{u\to\infty}
 K^2 s' t' \int_0^1\int_0^{v_2}\int_0^{uv_1} \sqrt{u v_2}\frac{\sqrt{z_1}}{v_1} \, e^{a(\theta)\left(2z_1-uv_1-\frac{z_1 v_2}{v_1}\right)}
 \cos\Big(b(\theta)\Big(uv_1-\frac{z_1 v_2}{v_1}\Big)\Big) dz_1 dv_1 dv_2.
\]
Next, we make the changes of variables $\bar{v}_1:=uv_1$ and $\bar{v}_2:=uv_2$ and we apply again l'H\^opital's rule. Hence, the latter limit
becomes
\[
 \lim_{u\to\infty}
 K^2 s' t' \sqrt{u}  \int_0^u \int_0^{\bar{v}_1}  \frac{\sqrt{z_1}}{\bar{v}_1} \, e^{a(\theta)\left(2z_1-\bar{v}_1-\frac{z_1 u}{\bar{v}_1}\right)}
 \cos\Big(b(\theta)\Big(\bar{v}_1-\frac{z_1 u}{\bar{v}_1}\Big)\Big) dz_1 d\bar{v}_1.
\]
Finally, performing the changes $x:=\frac{z_1}{\bar{v}_1}$ and
$y:=\frac{\bar{v}_1}{u}$, we end up with
\begin{equation*}
 \lim_{\ep\to0}I_2^\ep = \lim_{u\to\infty} C_u,
\end{equation*}
with
\begin{equation}\label{eq:30}
 C_u=K^2 s' t' u^2 \int_0^1 \int_0^1 \sqrt{xy} \, e^{a(\theta)(2xy-y-x)u}
 \cos(b(\theta)(y-x)u) dx dy.
\end{equation}
At this point, we introduce the auxiliary term mentioned above:
\begin{equation}\label{eq:40}
 \tilde{C}_u:=K^2 s' t' u^2 \int_0^1 \int_0^1 \sqrt{y} \, e^{a(\theta)(2xy-y-x)u}
 \cos(b(\theta)(y-x)u) dx dy,
\end{equation}
where we note that, compared to the right hand-side of
\eqref{eq:30}, we have only replaced $\sqrt{xy}$ by $\sqrt{y}$. For
the moment, assume that $\lim_{u\to\infty} (C_u-\tilde{C}_u) =0$.
Let us compute the limit of $\tilde{C}_u$, recalling that this term
has been defined in \eqref{eq:40}. As in the analysis of the term
$I_1^\ep$, we use the formula $\cos(\theta)=\frac12
(e^{i\theta}+e^{-i\theta})$, so we split $\tilde{C}_u$ as the sum of
two terms (multiplied by $\frac12$), one of which is given by
\[
 K^2 s' t'u^2 \int_0^1 \int_0^1 \sqrt{y} \, e^{u (2xy a(\theta)-y(a(\theta)-ib(\theta))-x(a(\theta)+ib(\theta))} dx dy,
\]
and the other one is the same with $a(\theta)+ib(\theta)$ instead of
$a(\theta)-ib(\theta)$. Integrating first with respect to $x$,
applying l'H\^opital's rule, and doing a change of variables, one
gets that the limit of the above term equals to
\[
 \lim_{u\to\infty}  K^2 s' t' \frac{\int_0^u \sqrt{y} \, e^{(a(\theta)+ib(\theta))y}dy}
 {\left((a(\theta)+ib(\theta))-\frac{(a(\theta)+ib(\theta))^2}{2a(\theta)}\right)\sqrt{u} \, e^{(a(\theta)+ib(\theta))u}}.
\]
It is straightforward to check that the latter limit is $\frac{K^2
s' t'}{a(\theta)^2+b(\theta)^2}$. The limit of the term involving
$a(\theta)+ib(\theta)$ will also be given by $\frac{K^2 s'
t'}{a(\theta)^2+b(\theta)^2}$. Therefore, we have that
\[
 \lim_{\ep\to 0} I_2^\ep = \lim_{u\to\infty} \tilde{C}_u=s' t' K^2 \frac{1}{a(\theta)^2+b(\theta)^2}.
\]
In conclusion, owing to \eqref{eq:51} and the expression of $K$
given in \eqref{eq:50}, the lemma's statement holds in the case
$s=t=0$.

In order to conclude the present step, we need to check that
$\lim_{u\to\infty} (C_u-\tilde{C}_u) =0$, that is
\[
 \lim_{u\to\infty}
 u^2 \int_0^1 \int_0^1 (\sqrt{xy}-\sqrt{y}) \, e^{a(\theta)(2xy-y-x)u}
 \cos(b(\theta)(2xy-y-x)u) dx dy =0.
\]
Let us introduce the notation
\[
D_u:=u^2 \int_0^1 \int_0^1 (\sqrt{xy}-\sqrt{y}) \,
e^{a(\theta)(2xy-y-x)u}dx dy.
\]
Then, it clearly holds that
\[
-D_u \leq C_u - \tilde{C}_u \leq D_u.
\]
In order to apply a sandwich type argument, we will prove that both
$-D_u$ and $D_u$ converge to zero as $u$ tends to infinity. We will
only tackle the term $D_u$, since the analysis of $-D_u$ is
analogous. Note that $D_u=D_u^1-D_u^2$, where
\[
D_u^1= u^2 \int_0^1 \int_0^1 \sqrt{xy} \, e^{a(\theta)(2xy-y-x)u}dx
dy \quad \text{and}\quad D_u^2= u^2 \int_0^1 \int_0^1 \sqrt{y} \,
e^{a(\theta)(2xy-y-x)u}dx dy.
\]
Regarding $D_u^2$, observe that the integral in $x$ can be computed
explicitly and we can argue as follows:
\begin{align*}
\lim_{u\to \infty} D_u^2 & = \lim_{u\to \infty} u \int_0^1
\frac{\sqrt{y}}{a(\theta)(2y-1)}\left(
e^{a(\theta)(y-1)u}-e^{-a(\theta)yu}\right) dy\\
& = \lim_{u\to \infty} \frac{1}{a(\theta)} \frac{\int_0^1
    \frac{\sqrt{y}}{(2y-1)}\left(
    e^{a(\theta)\left(y-\frac{1}{2}\right)u}-e^{-a(\theta)\left(y-\frac{1}{2}\right)u}\right)
    dy}{\frac{e^{u\frac{a(\theta)}{2}}}{u}}\\
& = \lim_{u\to \infty}  \frac{\int_0^1
    \sqrt{y}\left(
    e^{a(\theta)\left(y-\frac{1}{2}\right)u}+e^{-a(\theta)\left(y-\frac{1}{2}\right)u}\right) dy}{\frac{a(\theta)e^{u\frac{a(\theta)}{2}}}{u}-\frac{2e^{u\frac{a(\theta)}{2}}}{u^2}}.
\end{align*}
In the last equality, we have applied l'H\^opital's rule. By
performing a change of variables, the latter expression equals to
\[
\lim_{u\to\infty} \left\{ \frac{\int_0^u\sqrt{y}e^{ y a(\theta)}dy}{\sqrt{u}a(\theta)e^{ua(\theta)}-\frac{2e^{ua(\theta)}}{\sqrt{u}}}+\frac{\int_0^u\sqrt{y}e^{-y a(\theta)}dy}{\sqrt{u}-\frac{1}{\sqrt{u}}}
\right\}.
\]
The second term in the above sum clearly converges to zero as
$u\to\infty$, while the limit of the first one equals to, thanks to
l'H\^opital's rule,
\[
\lim_{u\to\infty} \frac{1}{a(\theta)}\frac{\sqrt{u}\,
e^{a(\theta)u}}{e^{a(\theta)u}(a(\theta)\sqrt{u}+o(\sqrt{u}))}=
\frac{1}{a(\theta)^2}.
\]
Thus, we have proved that $\lim_{u\to \infty}
D_u^2=\frac{1}{a(\theta)^2}$. On the other hand, in order to deal
with $D_u^1$ we will use again a sandwich type argument, as follows.
First, note that we trivially have $D_u^1\leq D_u^2$. Next, applying
the changes of variables $v=uy$ and $z={xv}$, we end up with
\begin{align*}
\lim_{u\to\infty}D_u^1 & = \lim_{u\to\infty} \sqrt{u} \int_0^u
\int_0^v \frac{\sqrt{z}}{v} \, e^{-a(\theta)(v+\frac{zu}{v}-2z)}dz
dv\\
& \geq \lim_{u\to\infty} \sqrt{u} \int_0^u \int_0^v
\frac{\sqrt{z}}{v} \, e^{-a(\theta)(u-z)}dz dv.
\end{align*}
Observe that the latter limit equals to $\frac{1}{a(\theta)^2}$
because it corresponds to the limit of $B_u$ defined above in the
particular case of $s'=t'=K=1$ and $b=0$. Hence, we obtain that
\[
\lim_{u\to \infty} D_u^1=\frac{1}{a(\theta)^2}
\]
and therefore $\lim_{u\to\infty} D_u =0$.

\smallskip

{\it{Step 3.}} Assume that either $s\neq 0$ or $t\neq 0$. By step 1, recall that we have
\[
 \E\big[ |\Delta_{s,t} X_\ep(s',t')|^2\big]= 2(I^\ep_1+I^\ep_2),
\]
where the terms on the right hand-side have been defined in \eqref{eq:52} and \eqref{eq:23}, respectively.
Set
\[
 F^\ep(s,t):=\ep^2 K^2 \int_0^{\frac t\ep}\int_0^{\frac s\ep}\int_0^{\frac t\ep}\int_0^{\frac s\ep}
 f(x_1,x_2,y_1,y_2) 1_{\{x_1\leq x_2, y_1\leq y_2\}} dx_1 dy_1 dx_2 dy_2,
\]
where $f(x_1,x_2,y_1,y_2):= \sqrt{x_1 x_2 y_1 y_2}\,
e^{-a(\theta)(x_2 y_2-x_1 y_1)} \cos(b(\theta)(x_2 y_2-x_1 y_1))$,
and
\[
 G^\ep(s,t):=\ep^2 K^2 \int_0^{\frac t\ep}\int_0^{\frac s\ep}\int_0^{\frac t\ep}\int_0^{\frac s\ep}
 g(x_1,x_2,y_1,y_2) 1_{\{x_1\leq x_2, y_1\leq y_2\}} dx_1 dy_1 dx_2 dy_2,
\]
where $g(x_1,x_2,y_1,y_2):= \sqrt{x_1 x_2 y_1 y_2}\,
e^{-a(\theta)((y_2-y_1)x_1 + (x_2-x_1)y_1)}
\cos(b(\theta)((y_2-y_1)x_1 - (x_2-x_1)y_1))$. Observe that
$I_1^\ep$ and $I_2^\ep$ can be written as follows:
\begin{align*}
 I_1^\ep & = \Delta_{s,t}F^\ep(s',t') -
 \ep^2 K^2 \int_{\frac t\ep}^{\frac {t'}\ep}\int_{\frac s\ep}^{\frac {s'}\ep}\int_{\frac t\ep}^{\frac {t'}\ep}
 \int_0^{\frac s\ep}
 f(x_1,x_2,y_1,y_2) 1_{\{y_1\leq y_2\}} dx_1 dy_1 dx_2 dy_2 \\
 & \qquad \quad -
 \ep^2 K^2 \int_{\frac t\ep}^{\frac {t'}\ep}\int_{\frac s\ep}^{\frac {s'}\ep}\int_{0}^{\frac {{\color{red}{t}}}\ep}
 \int_0^{\frac s\ep}
 f(x_1,x_2,y_1,y_2) dx_1 dy_1 dx_2 dy_2 \\
 & \qquad \quad -
 \ep^2 K^2 \int_{\frac t\ep}^{\frac {t'}\ep}\int_{\frac s\ep}^{\frac {s'}\ep}\int_{0}^{\frac t\ep}
 \int_{\frac s\ep}^{\frac {s'}\ep}
 f(x_1,x_2,y_1,y_2) 1_{\{x_1\leq x_2\}} dx_1 dy_1 dx_2 dy_2\\
 & =: \Delta_{s,t}F^\ep(s',t') - I_{11}^\ep - I_{12}^\ep - I_{13}^\ep,
\end{align*}
and
\[
 I_2^\ep = \Delta_{s,t} G^\ep(s',t') - I_{21}^\ep - I_{22}^\ep - I_{23}^\ep,
\]
where $I_{2i}^\ep$, $i=1,2,3$, are defined analogously by using the
function $g$. By step 2, one verifies that
\[
\lim_{\ep\to 0} \Delta_{s,t}F^\ep(s',t') = \lim_{\ep\to 0}
\Delta_{s,t}G^\ep(s',t') = \frac12 (s'-s)(t'-t).
\]
In order to conclude the proof, it suffices to check that
$I_{ji}^\ep$ converges to zero as $\ep\to 0$, for all $j=1,2$ and
$i=1,2,3$. For this, we estimate any $I_{ji}^\ep$ by
$\tilde{I}_{ji}^\ep$, where the latter are defined by simply
bounding the cosinus by $1$. Next, we note that
$\tilde{I}_{1,i}^\ep\leq \tilde{I}_{2,i}^\ep$, for all $i=1,2,3$,
and that any of the $\tilde{I}_{2,i}^\ep$ can be bounded by
\begin{equation}\label{eq:56}
 \ep^2 K^2 \int_{0}^{\frac {t'}\ep}\int_{0}^{\frac {s}\ep}\int_{0}^{y_2}
 \int_{\frac s\ep}^{\frac {s'}\ep}
 \sqrt{x_1 x_2 y_1 y_2}\,
 e^{-a(\theta)((y_2-y_1)x_1 + (x_2-x_1)y_1)}  dx_2 dy_1 dx_1 dy_2.
\end{equation}
In this integral, we perform the changes of variables
$\bar{x}_i:=\ep x_i$ and $\bar{y}_i:=\ep y_i$, $i=1,2$, we set
$u:=\frac{1}{\ep^2}$, we use that $\bar{x}_2\leq s'$ and we
integrate with respect to $\bar{x}_2$. Thus, \eqref{eq:56} can be
bounded, up to some positive constant, by (using again the notation
$x_i$ and $y_i$ for the variables)
\[
 u \int_0^{t'} \int_0^s \int_0^{y_2} \frac{\sqrt{x_1 y_2}}{\sqrt{y_1}}
 \, e^{-a(\theta) ((y_2-y_1)x_1 + (s-x_1)y_1)u} dy_1 dx_1 dy_2.
\]
Estimating now $y_2$ by $t'$ inside the square root and integrating
in $y_2$, the above expression can be bounded by (up to some
constant)
\[
\int_0^s \int_0^{t'} \frac{1}{\sqrt{x_1 y_1}} \,
e^{-a(\theta)u(s-x_1)y_1} dy_1 dx_1.
\]
This expression converges to zero as $u\to\infty$, by the Monotone
convergence theorem.
\end{proof}

\medskip

Here is the second auxiliary result needed to prove Proposition
\ref{prop:6}.

\begin{lemma}\label{lem:99}
Let $(0,0)\leq (s,t)\leq (s',t')\leq (S,T)$. Then, there exists a
sequence $\{C_\ep\}_{\ep>0}$ such that $\lim_{\ep\to 0}
C_\ep=4(s'-s)^2(t'-t)^2$ and
\[
\E\left[ \left( \E\left[ |\Delta_{s,t} X^\ep(s',t')|^2
\,|\mathcal{F}^\ep_{s,T}\right]\right)^2\right]\leq C_\ep.
\]
\end{lemma}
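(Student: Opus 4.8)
The plan is to turn the conditional second moment into an explicit deterministic integral, to observe that its expectation is exactly $\E[|\Delta_{s,t}X_\ep(s',t')|^2]$, which tends to $2(s'-s)(t'-t)$ by Lemma \ref{lem:25}, and then to show that the conditional expectation concentrates so strongly about this mean that its own second moment is dominated by a sequence $C_\ep$ with the square limit $4(s'-s)^2(t'-t)^2$.

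First I would write, as in the proof of Proposition \ref{prop:2}, $\Delta_{s,t}X_\ep(s',t')=\ep K\int_{t/\ep}^{t'/\ep}\int_{s/\ep}^{s'/\ep}\sqrt{xy}\,e^{i\theta L(x,y)}\,dxdy$ and split the exponent along the line $x=s/\ep$ via $L(x,y)=L(\tfrac s\ep,y)+\big(L(x,y)-L(\tfrac s\ep,y)\big)$. The first summand is $\F^\ep_{s,T}$-measurable while the second is the increment of $L$ over $[\tfrac s\ep,x]\times[0,y]$, which is independent of $\F^\ep_{s,T}$ by property (i) of Definition \ref{def:1}, exactly as in Lemma \ref{lem:9}. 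Squaring and taking conditional expectation, the measurable phase $e^{i\theta(L(\frac s\ep,y_1)-L(\frac s\ep,y_2))}$ survives, while the independent phase is replaced by its characteristic function $\phi(x_1,x_2,y_1,y_2)$, computed from \eqref{eq:999} according to the orderings of $x_1,x_2$ and of $y_1,y_2$. This produces an explicit fourfold integral for $\E[|\Delta_{s,t}X_\ep(s',t')|^2\,|\,\F^\ep_{s,T}]$, whose full expectation recovers the quantity already treated in Lemma \ref{lem:25}.

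Next I would square this fourfold integral and take the full expectation, obtaining an eightfold integral in $(x_1,\dots,x_4,y_1,\dots,y_4)$ whose only random ingredient is the one-parameter L\'evy process $y\mapsto L(\tfrac s\ep,y)$, of exponent $\tfrac s\ep\Psi$; its contribution $\E[e^{i\theta(L(\frac s\ep,y_1)-L(\frac s\ep,y_2)+L(\frac s\ep,y_3)-L(\frac s\ep,y_4))}]$ is evaluated through \eqref{eq:999} by ordering $y_1,\dots,y_4$. The leading contribution comes from the diagonal regime $x_1\approx x_2$, $y_1\approx y_2$, $x_3\approx x_4$, $y_3\approx y_4$, in which the two $y$-pairs involve increments of $L(\tfrac s\ep,\cdot)$ over disjoint intervals: the coupling factorizes, each half recombines with its characteristic function $\phi$ through the identity $L(\tfrac s\ep,y)+\big(L(x,y)-L(\tfrac s\ep,y)\big)=L(x,y)$, and the eightfold integral splits into the product of two copies of the integral of Lemma \ref{lem:25}, each tending to $2(s'-s)(t'-t)$ (which is where the value of $K$ in \eqref{eq:50} is used). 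This yields the announced constant $4(s'-s)^2(t'-t)^2$. The off-diagonal contributions, where the coupling genuinely survives, I would bound by replacing every characteristic-function factor by the real decaying exponential $e^{-a(\theta)\cdot(\mathrm{area})}$ as in \eqref{eq:3}, and then show, via the usual rescaling and l'H\^opital analysis, that they vanish. The resulting total upper bound is the sought $C_\ep$, and the cases $s=0$ and/or $t=0$ are handled separately as in Steps 2 and 3 of Lemma \ref{lem:25}.

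The main obstacle is to keep the estimate sharp enough that $C_\ep$ converges to exactly $4(s'-s)^2(t'-t)^2$ and not to a larger constant: the diagonal block must be evaluated exactly, so one may not merely bound moduli there, and only the off-diagonal terms may be estimated crudely. Indeed, since the tower property gives $\E[\,\E[|\Delta_{s,t}X_\ep(s',t')|^2\,|\,\F^\ep_{s,T}]\,]=\E[|\Delta_{s,t}X_\ep(s',t')|^2]\to 2(s'-s)(t'-t)$ and conditional Jensen gives the lower bound $(\E[\,\cdot\,])^2\to 4(s'-s)^2(t'-t)^2$, the real content of the lemma is the concentration statement $\mathrm{Var}\big(\E[|\Delta_{s,t}X_\ep(s',t')|^2\,|\,\F^\ep_{s,T}]\big)\to0$. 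Establishing that the fluctuating off-diagonal terms vanish, through the exponential decay of the characteristic functions in the area of the non-overlapping increment regions, is where the bulk of the combinatorial and analytic work will lie.
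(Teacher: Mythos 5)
Your proposal follows essentially the same route as the paper's proof: you compute the conditional expectation by splitting $L$ along the line $x=\frac{s}{\ep}$ and replacing the independent increments by their characteristic functions via \eqref{eq:999}, expand the square into an eightfold integral whose only randomness is the one-parameter L\'evy process $L(\frac{s}{\ep},\cdot)$, and split according to the orderings of $y_1,\dots,y_4$ --- the disjoint-interval orderings factorize into (a bound by) the square of the quantity $\Theta_\ep$ from Lemma \ref{lem:25}, giving $C_\ep\to 4(s'-s)^2(t'-t)^2$, while the interleaved orderings are bounded by real exponentials $e^{-a(\theta)\cdot(\mathrm{area})}$ and shown to vanish, which is exactly the structure of Steps 1--4 in the paper. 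Your framing of the lemma as the concentration statement that the conditional expectation's variance tends to zero, forcing the diagonal block to be kept exact while only the off-diagonal terms are crudely bounded, matches the paper's strategy precisely.
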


\begin{proof} We split the proof in four steps.

\smallskip

{\it{Step 1.}} By definition of the random field $X^\ep$, we first
observe that
\begin{align*}
& \E\left[ |\Delta_{s,t} X^\ep(s',t')|^2
\,|\mathcal{F}^\ep_{s,T}\right] \\
& \qquad = K^2\ep^2 \int_{\frac t\ep}^{\frac {t'}\ep}\int_{\frac
s\ep}^{\frac {s'}\ep}\int_{\frac t\ep}^{\frac {t'}\ep}\int_{\frac
s\ep}^{\frac {s'}\ep} \sqrt{x_1x_2y_1y_2} \; \E\left[
e^{i\theta\left(L(x_2,y_2)-L(x_1,y_1)\right)}|\mathcal{F}^\ep_{s,T}\right]
\, dx_1dx_2dy_1dy_2.
\end{align*}
In order to compute the above conditional expectation, we have to
consider all possible orders of $x_1,x_2$ and $y_1,y_2$,
respectively, which corresponds to a total of 4 possibilities.
Hence,
\begin{align*}
\E\left[ |\Delta_{s,t} X^\ep(s',t')|^2
\,|\mathcal{F}^\ep_{s,T}\right] &  = K^2\ep^2 \int_{\frac
t\ep}^{\frac {t'}\ep}\int_{\frac s\ep}^{\frac {s'}\ep}\int_{\frac
t\ep}^{y_2}\int_{\frac s\ep}^{x_2} \sqrt{x_1x_2y_1y_2}  \,
e^{i\theta\left(L(\frac s\ep,y_2)-L(\frac
s\ep,y_1)\right)} \\
& \qquad \qquad \times
e^{-\Psi(\theta)\left((x_2-x_1)y_2+(y_2-y_1)(x_1-\frac s\ep)\right)}
dx_1dy_1dx_2dy_2\\
& \;\; + K^2\ep^2 \int_{\frac t\ep}^{\frac {t'}\ep}\int_{\frac
s\ep}^{\frac {s'}\ep}\int_{\frac t\ep}^{y_2}\int_{\frac s\ep}^{x_2}
\sqrt{x_1x_2y_1y_2} \, e^{-i\theta\left(L(\frac s\ep,y_2)-L(\frac
s\ep,y_1)\right)} \\
& \qquad \qquad \times  e^{-\Psi(\theta)(x_2-x_1)y_1}
e^{-\Psi(-\theta)(y_2-y_1)(x_1-\frac s\ep)}
dx_1dy_1dx_2dy_2\\
& \;\; + K^2\ep^2 \int_{\frac t\ep}^{\frac {t'}\ep}\int_{\frac
s\ep}^{\frac {s'}\ep}\int_{\frac t\ep}^{y_2}\int_{\frac s\ep}^{x_2}
\sqrt{x_1x_2y_1y_2} \, e^{i\theta\left(L(\frac s\ep,y_2)-L(\frac
s\ep,y_1)\right)} \\
& \qquad \qquad \times e^{-\Psi(-\theta)(x_2-x_1)y_1}
e^{-\Psi(\theta)(y_2-y_1)(x_1-\frac s\ep)}
dx_1dy_1dx_2dy_2\\
& \;\; + K^2\ep^2 \int_{\frac t\ep}^{\frac {t'}\ep}\int_{\frac
s\ep}^{\frac {s'}\ep}\int_{\frac t\ep}^{y_2}\int_{\frac s\ep}^{x_2}
\sqrt{x_1x_2y_1y_2} \, e^{-i\theta\left(L(\frac s\ep,y_2)-L(\frac
s\ep,y_1)\right)} \\
& \qquad \qquad \times
e^{-\Psi(-\theta)\left((x_2-x_1)y_2+(y_2-y_1)(x_1-\frac
s\ep)\right)} dx_1dy_1dx_2dy_2.
\end{align*}
We have also applied changes of variables in order to have $x_1\leq
x_2$ and $y_1\leq y_2$ in all terms. We denote by $A_i^\ep$,
$i=1,2,3,4$, the above four terms, respectively. Thus, we have
\[
\E\left[ \left( \E\left[ |\Delta_{s,t} X^\ep(s',t')|^2
\,|\mathcal{F}^\ep_{s,T}\right]\right)^2\right]= \sum_{i,j=1}^4 \E
\left[A_i^\ep A_j^\ep\right].
\]
For the sake of clarity, we will only analyze one of the terms in
the above sum, since the other ones can be treated exactly in the
same way. So, we proceed to tackle the term
$\E\left[(A_1^\ep)^2\right]$. In fact, by Fubini theorem, we have
that
\begin{align}
\E\left[(A_1^\ep)^2\right] & = K^4\ep^4
\int_{\frac{t}{\ep}}^{\frac{t'}{\ep}}\int_{\frac{s}{\ep}}^{\frac{s'}{\ep}}
\int_{\frac{t}{\ep}}^{y_4}\int_{\frac{s}{\ep}}^{x_4}\int_{\frac{t}{\ep}}^{\frac{t'}{\ep}}
\int_{\frac{s}{\ep}}^{\frac{s'}{\ep}}\int_{\frac{t}{\ep}}^{y_2}\int_{\frac{s}{\ep}}^{x_2}
\sqrt{x_1x_2x_3x_4}\sqrt{y_1y_2y_3y_4} \nonumber
\\
& \qquad \times \mathbb{E}\left[e^{i\theta\left(
L(\frac{s}{\ep},y_2)- L(\frac{s}{\ep},y_1)+L(\frac{s}{\ep},y_4)-
L(\frac{s}{\ep},y_3)\right)} \right] \nonumber \\
& \qquad  \times e^{-\Psi(\theta)\left(
(x_2-x_1)y_2+(y_2-y_1)(x_1-\frac{s}{\ep})+
(x_4-x_3)y_4+(y_4-y_3)(x_3-\frac{s}{\ep})\right)}\nonumber \\
& \qquad \times dx_1dy_1dx_2dy_2dx_3dy_3dx_4dy_4. \label{eq:555}
\end{align}
Note that in the above integral we have $y_1\leq y_2$ and $y_3\leq
y_4$. However, in order to compute the expectation in \eqref{eq:555},
we need to consider all possible orders of the variables
$y_1,y_2,y_3,y_4$, with the restrictions $y_1\leq y_2$ and $y_3\leq
y_4$. This amounts to take into account 6 different possibilities,
which we split in two groups:
\begin{itemize}
\item[(i)] $y_1\leq y_2 \leq y_3 \leq y_4$ and $y_3\leq y_4 \leq y_1 \leq
y_2$,
\item[(ii)] $y_1\leq y_3 \leq y_2 \leq y_4$, $y_1\leq y_3 \leq y_4 \leq
y_2$, $y_3\leq y_1 \leq y_4 \leq y_2$ and $y_3\leq y_1 \leq y_2 \leq
y_4$.
\end{itemize}
Then, we have that
\begin{equation}\label{eq:600}
 \E\left[(A_1^\ep)^2\right]=\sum_{k=1}^6 B_k^\ep(1,1),
\end{equation}
where $B_1^\ep(1,1), B_2^\ep(1,1)$ correspond to \eqref{eq:555} with the orders of (i), respectively,
while $B_k^\ep(1,1)$, $k=3,4,5,6$, correspond to
\eqref{eq:555} with the orders of (ii), respectively. It turns out that we have a similar decomposition of
any of the terms $\E
\big[A_i^\ep A_j^\ep\big]$, which we denote by
\[
 \E
\left[A_i^\ep A_j^\ep\right]=\sum_{k=1}^6 B_k^\ep(i,j).
\]
Hence
\begin{equation}\label{eq:554}
\E\left[ \left( \E\left[ |\Delta_{s,t} X^\ep(s',t')|^2
\,|\mathcal{F}^\ep_{s,T}\right]\right)^2\right]= \sum_{i,j=1}^4 \sum_{k=1}^6 B_k^\ep(i,j).
\end{equation}
In the next two steps, we will focus on the analysis of (some of)
the terms in the decomposition \eqref{eq:600} of
$\E\left[(A_1^\ep)^2\right]$. As already mentioned, the terms
arising from $\E \big[A_i^\ep A_j^\ep\big]$ can be treated
analogously. We will come back to expansion \eqref{eq:554} later in
step 4.

\smallskip

{\it{Step 2.}} We claim that, for any $k=3,4,5,6$, it holds
\begin{align}
|B_k^\ep(1,1)| & \leq  K^4 \ep^4 \int_D \sqrt{x_1x_2x_3x_4}\sqrt{y_1y_2y_3y_4} \,
1_{\{x_1\leq x_2\}} 1_{\{x_3\leq x_4\}} 1_{\{y_1\leq y_2\leq y_3\leq
y_4\}}\nonumber \\
& \qquad \qquad \times e^{-a(\theta)\left((x_4-x_3)\frac t\ep +
(x_2-x_1)\frac t\ep + (y_4-y_3)\frac s\ep + (y_2-y_1)\frac s\ep +
(y_3-y_2)(x_1-\frac s\ep)\right)}\nonumber \\
& \qquad \qquad \times dx_1dy_1dx_2dy_2dx_3dy_3dx_4dy_4,
\label{eq:556}
\end{align}
where $D:=[\frac s\ep,\frac{s'}{\ep}]^4\times [\frac
t\ep,\frac{t'}{\ep}]^4$, and we recall that $a(\theta)$ is the real
part of $\Psi(\theta)$. We prove this estimate for $B_3^\ep(1,1)$.
For the remaining terms the argument is completely analogous. So,
let us assume that in \eqref{eq:555} we have the order $y_1\leq y_3
\leq y_2 \leq y_4$. In this case, the expectation in \eqref{eq:555}
equals to
\[
 e^{-\Psi(\theta)\left((y_4-y_2)\frac s\ep + (y_3-y_1)\frac s\ep + 2(y_2-y_3)\frac s\ep\right)}.
\]
Plugging this term in \eqref{eq:555} and shifting the modulus inside the integral, we can infer that
\begin{align*}
 |B_3^\ep(1,1)| & \leq
 K^4\ep^4
\int_D
\sqrt{x_1x_2x_3x_4}\sqrt{y_1y_2y_3y_4} 1_{\{x_1\leq x_2\}} 1_{\{x_3\leq x_4\}} 1_{\{y_1\leq y_3\leq y_2\leq
y_4\}} \nonumber \\
& \qquad \times e^{-a(\theta)\left((x_4-x_3)\frac t\ep +
(x_2-x_1)\frac t\ep + (y_4-y_2)\frac s\ep + (y_3-y_1)\frac s\ep +
(y_2-y_1)(x_1-\frac s\ep)\right)} \nonumber \\
& \qquad \times  e^{-a(\theta)\left( 2(y_2-y_3)\frac s\ep +
(y_4-y_3)(x_3-\frac s\ep)\right)}
dx_1dy_1dx_2dy_2dx_3dy_3dx_4dy_4\\
& \leq K^4\ep^4
\int_D
\sqrt{x_1x_2x_3x_4}\sqrt{y_1y_2y_3y_4} 1_{\{x_1\leq x_2\}} 1_{\{x_3\leq x_4\}} 1_{\{y_1\leq y_3\leq y_2\leq
y_4\}} \nonumber \\
& \qquad \times e^{-a(\theta)\left((x_4-x_3)\frac t\ep +
(x_2-x_1)\frac t\ep + (y_4-y_2)\frac s\ep + (y_3-y_1)\frac s\ep +
(y_2-y_1)(x_1-\frac s\ep)\right)} \\
& \qquad \times
dx_1dy_1dx_2dy_2dx_3dy_3dx_4dy_4.
\end{align*}
Performing a change of variable, we obtain that the latter term equals to
\begin{align*}
&K^4\ep^4
\int_D
\sqrt{x_1x_2x_3x_4}\sqrt{y_1y_2y_3y_4} 1_{\{x_1\leq x_2\}} 1_{\{x_3\leq x_4\}} 1_{\{y_1\leq y_2\leq y_3\leq
y_4\}} \nonumber \\
& \qquad \times e^{-a(\theta)\left((x_4-x_3)\frac t\ep +
(x_2-x_1)\frac t\ep + (y_4-y_3)\frac s\ep + (y_2-y_1)\frac s\ep +
(y_3-y_1)(x_1-\frac s\ep)\right)}\\
& \qquad \times
dx_1dy_1dx_2dy_2dx_3dy_3dx_4dy_4.
\end{align*}
In order to obtain \eqref{eq:556}, it suffices to observe that, in
the domain of integration, it holds that $(y_3-y_1)(x_1-\frac
s\ep)\geq (y_3-y_2)(x_1-\frac s\ep)$.

\smallskip

{\it{Step 3.}} Here, we prove that the right hand-side of \eqref{eq:556} converges to zero as $\ep\to 0$. Let us introduce the following notation:
\begin{align*}
\beta^\ep & := \ep^4 \int_D \sqrt{x_1x_2x_3x_4}\sqrt{y_1y_2y_3y_4} \,
1_{\{x_1\leq x_2\}} 1_{\{x_3\leq x_4\}} 1_{\{y_1\leq y_2\leq y_3\leq
y_4\}}\nonumber \\
& \qquad \qquad \times e^{-a(\theta)\left((x_4-x_3)\frac t\ep +
(x_2-x_1)\frac t\ep + (y_4-y_3)\frac s\ep + (y_2-y_1)\frac s\ep +
(y_3-y_2)(x_1-\frac s\ep)\right)}\nonumber \\
& \qquad \qquad \times dx_1dy_1dx_2dy_2dx_3dy_3dx_4dy_4,
\end{align*}
so we want to check that $\lim_{\ep\to0} \beta^\ep =0$.

To start with, in the expression of $\beta^\ep$ we bound the two
square roots by using the upper limit of any $x_i$ and $y_i$. Next,
we integrate with respect to $x_4$, $x_3$ and $x_2$. We also use the
fact that, according to the statement of Proposition \ref{prop:6},
we may assume that $t>0$. Thus,
\begin{align*}
 \beta^\ep & \leq C \ep \int_{\frac s\ep}^{\frac{s'}{\ep}} \int_{\frac t\ep}^{\frac{t'}{\ep}} \int_{\frac t\ep}^{y_3}\int_{y_3}^{\frac{t'}{\ep}}
 \int_{\frac t\ep}^{y_2}
 e^{-a(\theta)\left((y_4-y_3)\frac s\ep + (y_2-y_1)\frac s\ep +
(y_3-y_2)(x_1-\frac s\ep)\right)} \, dy_1 dy_4 dy_2 dy_3 dx_1.
\end{align*}
At this point, we integrate with respect to $y_1$ and $y_4$, thus
\begin{align*}
 \beta^\ep & \leq C \ep^3
 \int_{\frac s\ep}^{\frac{s'}{\ep}} \int_{\frac t\ep}^{\frac{t'}{\ep}} \int_{\frac t\ep}^{y_3}
 e^{-a(\theta)(y_3-y_2)(x_1-\frac s\ep)} dy_2 dy_3 dx_1\\
 & = C \ep^3 \int_\ep^{\frac{s'-s}{\ep}}  \int_{\frac t\ep}^{\frac{t'}{\ep}} \int_{\frac t\ep}^{y_3}
 e^{-a(\theta)(y_3-y_2)x} dy_2 dy_3 dx\\
 & \qquad + C \ep^3 \int_0^\ep  \int_{\frac t\ep}^{\frac{t'}{\ep}} \int_{\frac t\ep}^{y_3}
 e^{-a(\theta)(y_3-y_2)x} dy_2 dy_3 dx.
\end{align*}
Note that the second term in the latter sum may be bounded, up to some positive constant, by $\ep^2$, which converges to zero.
Regarding the first term, it can be bounded by
\[
 C \ep^2 \int_\ep^{\frac{s'-s}{\ep}}  \frac1x \, dx = C \ep^2 \left(\ln(s'-s)-2\ln(\ep)\right),
\]
which also converges to zero as $\ep\to0$.

\smallskip

{\it{Step 4.}} By \eqref{eq:554} in step 1 and steps 2 and 3, we have that
\begin{align}
 \E\left[ \left( \E\left[ |\Delta_{s,t} X^\ep(s',t')|^2
\,|\mathcal{F}^\ep_{s,T}\right]\right)^2\right] & = \sum_{i,j=1}^4 \sum_{k=1}^6 B_k^\ep(i,j)\nonumber \\
& = \sum_{i,j=1}^4 \sum_{k=1}^2 B_k^\ep(i,j) + \rho_\ep,
\label{eq:557}
\end{align}
where we recall that $B_1^\ep(i,j)$ and $B_2^\ep(i,j)$ are the terms in
the decomposition of $\E \big[A_i^\ep A_j^\ep\big]$ with the orders of (i), respectively,
and $\lim_{\ep\to 0}\rho_\ep=0$.

Focusing again (only) on the case $i=j=1$, one easily verifies that
\begin{align*}
 \sum_{k=1}^2 B_k^\ep(1,1) & = K^4 \ep^2 \int_D \sqrt{x_1x_2x_3x_4}\sqrt{y_1y_2y_3y_4} \,
 1_{\{x_1\leq x_2\}} 1_{\{x_3\leq x_4\}}
 1_{\left\{ \{y_1\leq y_2\leq y_3\leq y_4\}\cup \{y_3\leq y_4\leq y_1\leq y_2\}\right\}}\nonumber \\
 & \qquad \times e^{-\Psi(\theta)\left((x_2-x_1)y_2+(x_4-x_3)y_4 + (y_2-y_1)x_1 + (y_4-y_3)x_3\right)}\\
 & \qquad \times dx_1dy_1dx_2dy_2dx_3dy_3dx_4dy_4,
\end{align*}
where we recall that $D:=[\frac s\ep,\frac{s'}{\ep}]^4\times [\frac
t\ep,\frac{t'}{\ep}]^4$. Observing that
\begin{align*}
 1_{\left\{ \{y_1\leq y_2\leq y_3\leq y_4\}\cup \{y_3\leq y_4\leq y_1\leq y_2\}\right\}}\leq
 1_{\{y_1\leq y_2\}} 1_{\{y_3\leq y_4\}},
\end{align*}
we end up with
\begin{align*}
 \sum_{k=1}^2 B_k^\ep(1,1) & \leq K^4 \ep^2 \int_D \sqrt{x_1x_2x_3x_4}\sqrt{y_1y_2y_3y_4} \,
 1_{\{x_1\leq x_2\}} 1_{\{x_3\leq x_4\}}
 1_{\{y_1\leq y_2\}} 1_{\{y_3\leq y_4\}}\nonumber \\
 & \qquad \times e^{-\Psi(\theta)\left((x_2-x_1)y_2+(x_4-x_3)y_4 + (y_2-y_1)x_1 + (y_4-y_3)x_3\right)}\\
 & \qquad \times dx_1dy_1dx_2dy_2dx_3dy_3dx_4dy_4.
\end{align*}
One can get similar estimates for $B_1^\ep(i,j)+B_2^\ep(i,j)$ with $i,j\neq 1$.
Gathering all the resulting bounds together, it can be
verified that
\begin{equation}\label{eq:586}
\E\left[ \left( \E\left[ |\Delta_{s,t} X^\ep(s',t')|^2
\,|\mathcal{F}^\ep_{s,T}\right]\right)^2\right] \leq \Theta_\ep^2 + {\rho}_\ep,
\end{equation}
where
\begin{align*}
 \Theta_\ep & = K^2\ep^2 \int_{\frac t\ep}^{\frac {t'}\ep} \int_{\frac t\ep}^{y_2}
 \int_{\frac s\ep}^{\frac {s'}\ep} \int_{\frac s\ep}^{x_2}
\sqrt{x_1x_2y_1y_2}  \,
e^{-\Psi(\theta)\left((x_2-x_1)y_2+(y_2-y_1)x_1\right)}
dx_1dx_2dy_1dy_2\\
& \quad +  K^2\ep^2 \int_{\frac t\ep}^{\frac {t'}\ep} \int_{\frac t\ep}^{y_2}
 \int_{\frac s\ep}^{\frac {s'}\ep} \int_{\frac s\ep}^{x_2}
\sqrt{x_1x_2y_1y_2}  \, e^{-\Psi(\theta)(x_2-x_1)y_1}
e^{-\Psi(-\theta)(y_2-y_1)x_1}
dx_1dx_2dy_1dy_2\\
& \quad +  K^2\ep^2 \int_{\frac t\ep}^{\frac {t'}\ep} \int_{\frac t\ep}^{y_2}
 \int_{\frac s\ep}^{\frac {s'}\ep} \int_{\frac s\ep}^{x_2}
\sqrt{x_1x_2y_1y_2}  \, e^{-\Psi(-\theta)(x_2-x_1)y_1}
e^{-\Psi(\theta)(y_2-y_1)x_1}
dx_1dx_2dy_1dy_2\\
& \quad +  K^2\ep^2 \int_{\frac t\ep}^{\frac {t'}\ep} \int_{\frac
t\ep}^{y_2}
 \int_{\frac s\ep}^{\frac {s'}\ep} \int_{\frac s\ep}^{x_2}
\sqrt{x_1x_2y_1y_2}  \,
e^{-\Psi(-\theta)\left((x_2-x_1)y_2+(y_2-y_1)x_1\right)}
dx_1dx_2dy_1dy_2.
\end{align*}
Note that $\Theta_\ep$ coincides with the right hand-side of equality \eqref{eq:585} in the proof of Lemma
\ref{lem:25}, where in the latter it was precisely proved that
\[
 \lim_{\ep\to0} \Theta_\ep= 2(t'-t)(s'-s).
\]
Therefore, by \eqref{eq:586} and recalling that $\lim_{\ep\to0}\rho_\ep=0$, we conclude the proof by taking
$C_\ep:=\Theta_\ep^2+\rho_\ep$.
\end{proof}

\medskip

We can now provide the proof of Proposition \ref{prop:6}.

{\it{Proof of Proposition \ref{prop:6}.}} We prove that, for all $0\leq s_1<\cdots<s_n\leq s$ and
$0\leq t_1<\cdots<t_n\leq T$, and any continuous and bounded function
$\varphi: \bC\to\R$, we have
\[
 \E_{\P}\left[\varphi(X(s_1,t_1),\dots,X(s_n,t_n))\left(\big(\Delta_{s,t} \mathsf{Re}(X)(s',t')\big)^2
 -(s'-s)(t'-t)\right)\right]=0
\]
and
\[
 \E_{\P}\left[\varphi(X(s_1,t_1),\dots,X(s_n,t_n))\left(\big(\Delta_{s,t} \mathsf{Im}(X)(s',t')\big)^2
 -(s'-s)(t'-t)\right)\right]=0.
\]
Since $\P_\ep$ converges to $\P$ weakly in $\C([0,S]\times [0,T];\bC)$, it suffices to check that
\begin{equation}\label{eq:200}
 \lim_{\ep\to0} A_\ep=0 \quad \text{and}\quad \lim_{\ep\to0} B_\ep=0,
\end{equation}
where
\[
 A_\ep:=  \E\left[\varphi(X_\ep(s_1,t_1),\dots,X_\ep(s_n,t_n))
 \left(\big(\Delta_{s,t} \mathsf{Re}(X_\ep)(s',t')\big)^2
 -(s'-s)(t'-t)\right)\right]
\]
and
\[
 B_\ep:= \E\left[\varphi(X_\ep(s_1,t_1),\dots,X_\ep(s_n,t_n))\left(\big(\Delta_{s,t}
 \mathsf{Im}(X_\ep)(s',t')\big)^2
 -(s'-s)(t'-t)\right)\right].
\]
Indeed, in order to check the validity of the limits in
\eqref{eq:200}, we will prove that
\[
 \lim_{\ep\to0} (A_\ep+B_\ep)=0 \quad \text{and}\quad \lim_{\ep\to0}
 (A_\ep-B_\ep)=0.
\]
We will first deal with the limit of $A_\ep+B_\ep$. More precisely,
we have that
\begin{align*}
A_\ep+B_\ep & = \E\left[\varphi(X_\ep(s_1,t_1),\dots,X_\ep(s_n,t_n))
 \left( |\Delta_{s,t}X^\ep(s',t')|^2 - 2(s-s')(t-t')\right)\right]\\
 & = \E\left[\varphi(X_\ep(s_1,t_1),\dots,X_\ep(s_n,t_n))
 \left( \E\left[ |\Delta_{s,t}X^\ep(s',t')|^2\, | \mathcal{F}^\ep_{s,T}\right]
 - 2(s-s')(t-t')\right)\right].
\end{align*}
Hence, to prove that $\lim_{\ep\to0}(A_\ep+B_\ep)=0$, it is enough
to check that $\E\big[ |\Delta_{s,t}X^\ep(s',t')|^2\, |
\mathcal{F}^\ep_{s,T}\big]$ converges in $L^2(\Omega)$ to
$2(s-s')(t-t')$, as $\ep\to0$. Indeed, by Lemma \ref{lem:99}, we
have:
\begin{align}
& \E\left[\left(\E\big[ |\Delta_{s,t}X^\ep(s',t')|^2\, |
\mathcal{F}^\ep_{s,T}\big] - 2(s-s')(t-t')\right)^2\right] \nonumber\\
& \qquad \leq C_\ep -4(s-s')(t-t') \E\big[
|\Delta_{s,t}X^\ep(s',t')|^2 \big] + 4(s-s')^2(t-t')^2,
\label{eq:201}
\end{align}
where $\lim_{\ep\to0}C_\ep=4(s'-s)^2(t'-t)^2$. So, by Lemma
\ref{lem:25}, the right hand-side of \eqref{eq:201} converges to
zero as $\ep\to0$.

Let us now deal with the limit of $A_\ep-B_\ep$. To start with, note
that
\begin{align*}
 A_\ep-B_\ep &  = \frac12\, \E\Bigg[
\varphi(X_\ep(s_1,t_1),\dots,X_\ep(s_n,t_n)) \\
& \quad \times \left.\left\{ \left( K \ep \int_{\frac
t\ep}^{\frac{t'}{\ep}} \int_{\frac s\ep}^{\frac{s'}{\ep}}
\sqrt{xy}\, e^{i\theta L(x,y)} dx dy\right)^2 + \left( K \ep
\int_{\frac t\ep}^{\frac{t'}{\ep}} \int_{\frac
s\ep}^{\frac{s'}{\ep}} \sqrt{xy}\, e^{-i\theta L(x,y)} dx
dy\right)^2\right\}\right].
\end{align*}
We are going to prove that $\lim_{\ep\to0} \Lambda_\ep=0$, where
\[
\Lambda_\ep:= \E\left[ \varphi(X_\ep(s_1,t_1),\dots,X_\ep(s_n,t_n))
\left( K \ep \int_{\frac t\ep}^{\frac{t'}{\ep}} \int_{\frac
s\ep}^{\frac{s'}{\ep}} \sqrt{xy}\, e^{i\theta L(x,y)} dx
dy\right)^2\right].
\]
The limit involving the complex conjugate $e^{-i\theta L(x,y)}$ can
be tackled using analogous arguments. Expanding the squared integral
of $\Lambda_\ep$, we end up with
\begin{align*}
\Lambda_\ep & = \E\Bigg[
\varphi(X_\ep(s_1,t_1),\dots,X_\ep(s_n,t_n))\\
& \quad \qquad \times \left.  K^2 \ep^2 \int_{\frac
t\ep}^{\frac{t'}{\ep}} \int_{\frac s\ep}^{\frac{s'}{\ep}}\int_{\frac
t\ep}^{\frac{t'}{\ep}} \int_{\frac s\ep}^{\frac{s'}{\ep}}
\sqrt{x_1x_2y_1y_2} \, e^{i\theta(L(x_1,y_1)+L(x_2,y_2))}
dx_1dx_2dy_1dy_2\right].
\end{align*}
As we have already done several times throughout the paper, we
consider the four possible orders of $x_1,x_2$ and $y_1,y_2$ and, in
each of these terms, we apply a change of variables so that we have
$x_1\leq x_2$ and $y_1\leq y_2$. Thus,
\begin{align*}
\Lambda_\ep & = \E\Bigg[
\varphi(X_\ep(s_1,t_1),\dots,X_\ep(s_n,t_n))\\
& \quad \qquad \times \left( 2 K^2 \ep^2 \int_{\frac
t\ep}^{\frac{t'}{\ep}} \int_{\frac s\ep}^{\frac{s'}{\ep}}\int_{\frac
t\ep}^{y_2} \int_{\frac s\ep}^{x_2} \sqrt{x_1x_2y_1y_2} \,
e^{i\theta(L(x_1,y_1)+L(x_2,y_2))} dx_1dy_1dx_2dy_2\right. \\
& \qquad \qquad + \left.\left. 2 K^2 \ep^2 \int_{\frac
t\ep}^{\frac{t'}{\ep}} \int_{\frac s\ep}^{\frac{s'}{\ep}}\int_{\frac
t\ep}^{y_2} \int_{\frac s\ep}^{x_2} \sqrt{x_1x_2y_1y_2} \,
e^{i\theta(L(x_2,y_1)+L(x_1,y_2))} dx_1dy_1dx_2dy_2\right) \right].
\end{align*}
At this point, the idea is to write $L(x_1,y_1)+L(x_2,y_2)$ and
$L(x_2,y_1)+L(x_1,y_2)$ as sums of suitable rectangular increments
of $L$ (which will be clearly specified in the next equation), and
use the property of independent (rectangular) increments of $L$ (see
Definition\ref{def:1}). Proceeding in this way, one obtains that
\begin{align*}
\Lambda_\ep & = \E\left[
\varphi(X_\ep(s_1,t_1),\dots,X_\ep(s_n,t_n))\, e^{i2\theta L\left(\frac s\ep,\frac t\ep\right)}\right] \\
& \qquad \times \left( 2 K^2 \ep^2 \int_{\frac
t\ep}^{\frac{t'}{\ep}} \int_{\frac s\ep}^{\frac{s'}{\ep}}\int_{\frac
t\ep}^{y_2} \int_{\frac s\ep}^{x_2} \sqrt{x_1x_2y_1y_2} \,\,
\E\left[e^{i\theta\left(\Delta_{x_1,y_1}L(x_2,y_2) +
\Delta_{0,y_1}L(x_1,y_2) + \Delta_{x_1,0}L(x_2,y_1)\right)}\right]
 \right. \\
& \qquad \qquad \qquad \times \E\left[e^{i2\theta\left(\Delta_{\frac
s\ep,\frac t\ep}L(x_1,y_1) + \Delta_{0,\frac t\ep}L(\frac s\ep,y_1)
+ \Delta_{\frac s\ep,0}L(x_1,\frac t\ep)\right)}\right]
dx_1dy_1dx_2dy_2\\
& \qquad \qquad + 2 K^2 \ep^2 \int_{\frac t\ep}^{\frac{t'}{\ep}}
\int_{\frac s\ep}^{\frac{s'}{\ep}}\int_{\frac t\ep}^{y_2}
\int_{\frac s\ep}^{x_2} \sqrt{x_1x_2y_1y_2} \,\,
\E\left[e^{i\theta\left(\Delta_{0,y_1}L(x_1,y_2) +
\Delta_{x_1,0}L(x_2,y_1)\right)}\right]\\
& \qquad \qquad \qquad \times \E\left[e^{i2\theta\left(\Delta_{\frac
s\ep,\frac t\ep}L(x_1,y_1) + \Delta_{0,\frac t\ep}L(\frac s\ep,y_1)
+ \Delta_{\frac s\ep,0}L(x_1,\frac t\ep)\right)}\right]
dx_1dy_1dx_2dy_2 \Bigg).
\end{align*}
Recalling that $\varphi$ is a bounded function and computing the
expectations of complex exponentials in terms of the L\'evy exponent
$\Psi(\xi)=a(\xi)+ib(\xi)$, one can easily obtain that
$|\Lambda_\ep|\leq C\, \tilde{\Lambda}_\ep$, where $C$ is a positive
constant and
\begin{align*}
\tilde{\Lambda}_\ep & := K^2 \ep^2 \int_{\frac
t\ep}^{\frac{t'}{\ep}} \int_{\frac s\ep}^{\frac{s'}{\ep}}\int_{\frac
t\ep}^{y_2} \int_{\frac s\ep}^{x_2} \sqrt{x_1x_2y_1y_2} \,
e^{-a(\theta)\left((y_2-y_1)x_1 + (x_2-x_1)y_1\right)}\\
& \qquad \qquad \times e^{-a(2\theta)\left((y_1-\frac t\ep)\frac
s\ep + (x_1-\frac s\ep)y_1\right)} dx_1dy_1dx_2dy_2.
\end{align*}
We finally prove that $\lim_{\ep\to 0}\tilde{\Lambda}_\ep=0$.
Indeed, taking into account the integration limits of all variables
and applying Fubini theorem, we have
\[
\tilde{\Lambda}_\ep \leq C\, \int_{\frac t\ep}^{\frac{t'}{\ep}}
\int_{\frac s\ep}^{\frac{s'}{\ep}}\int_{y_1}^{\frac{t'}{\ep}}
\int_{x_1}^{\frac{s'}{\ep}}
e^{-\min(a(\theta),a(2\theta))\left((y_2-\frac t\ep)\frac s\ep +
(x_2-\frac s\ep)\frac t\ep\right)} dx_2 dy_2 dx_1 dy_1.
\]
We integrate with respect to $x_2$ and $y_2$, thus
\begin{align*}
\tilde{\Lambda}_\ep & \leq C\, \ep^2 \int_{\frac
t\ep}^{\frac{t'}{\ep}} \int_{\frac s\ep}^{\frac{s'}{\ep}}
e^{-\min(a(\theta),a(2\theta))\left((y_1-\frac t\ep)\frac s\ep +
(x_1-\frac s\ep)\frac t\ep\right)} dx_1 dy_1 \\
& \leq C\, \ep^4.
\end{align*}
Hence, we have $\lim_{\ep\to 0}\tilde{\Lambda}_\ep=0$, which implies
that $\lim_{\ep\to 0}\Lambda_\ep=0$, and so $\lim_{\ep\to
0}A_\ep-B_\ep=0$. The proof is complete. \qed

\medskip

We have all needed ingredients to prove the main result of the
paper:

{\it{Proof of Theorem \ref{thm:main}}}. Taking into account the
tightness result Proposition \ref{prop:2} and Propositions
\ref{prop:5} and \ref{prop:6}, in order to apply Theorem \ref{thm:3}
we only need to prove the validity of condition \eqref{eq:8888} in
our case.

Note that $\mathsf{Re}(X)$ and $\mathsf{Im}(X)$ satisfy
\eqref{eq:8888} if, for any $(0,0)\leq (s,t)\leq (s',t')\leq (S,T)$,
$0\leq s_1<\cdots<s_n\leq s$ and $0\leq t_1<\cdots<t_n\leq t$, and
any continuous bounded function $\varphi:\bC^n\to \R$, we have
\[
\lim_{\ep\to 0} \E\left[\varphi(X_\ep(s_1,t_1),\dots,X_\ep(s_n,t_n))
\left( \Delta_{s,t} \mathsf{Re}(X_\ep)(s',t')\right) \left(
\Delta_{s,t} \mathsf{Im}(X_\ep)(s',t')\right)\right] = 0.
\]
Using the equality $\alpha \beta=\frac i4  \left\{ (\alpha-i\beta)^2
- (\alpha+i\beta)^2\right\}$, we obtain that
\begin{align}
& \E\left[\varphi(X_\ep(s_1,t_1),\dots,X_\ep(s_n,t_n)) \left(
\Delta_{s,t} \mathsf{Re}(X_\ep)(s',t')\right) \left( \Delta_{s,t}
\mathsf{Im}(X_\ep)(s',t')\right)\right]\nonumber\\
& \quad = \frac i4 \, \E
\Bigg[\varphi(X_\ep(s_1,t_1),\dots,X_\ep(s_n,t_n)) \left\{ \left(K
\ep \int_{\frac t\ep}^{\frac{t'}{\ep}} \int_{\frac
s\ep}^{\frac{s'}{\ep}} \sqrt{xy}\, e^{-i\theta L(x,y)} dx
dy\right)^2\right.\nonumber \\
& \qquad \qquad - \left.\left.\left(K \ep \int_{\frac
t\ep}^{\frac{t'}{\ep}} \int_{\frac s\ep}^{\frac{s'}{\ep}}
\sqrt{xy}\, e^{i\theta L(x,y)} dx dy\right)^2\right\}\right].
\label{eq:203}
\end{align}
We observe that, in the analysis of the term $A_\ep-B_\ep$ in the
proof of Proposition \ref{prop:6}, we indeed proved that the two
terms arising from the difference in \eqref{eq:203} converge to zero
as $\ep\to 0$. So the proof is complete. \qed


\section{Weak convergence for the stochastic heat equation}
\label{sec:heat}

We consider the following one-dimensional quasi-linear stochastic
heat equation:
\begin{equation}\label{eq:122}
\frac{\partial U}{\partial t}(t,x)-\frac{\partial ^2 U}{\partial
x^2} (t,x)=b(U(t,x))+\dot W(t,x),\quad (t,x)\in [0,T]\times [0,1],
 \end{equation}
where $T>0$ stands for a fixed time horizon, $b:\R\rightarrow \R$ is
a globally Lipschitz function and $\dot W(t,x)$ denotes the
space-time white noise. We impose the initial condition
$U(0,x)=u_0(x)$, $x\in [0,1]$, where $u_0:[0,1]\rightarrow \R$ is a
continuous function, and boundary conditions of Dirichlet type:
$$U(t,0)=U(t,1)=0,\; t\in [0,T].$$
For simplicity's sake, throughout the section we will assume that
$T=1$. All results presented here can be easily extended to a
general $T>0$.

The solution to equation \eqref{eq:122} is interpreted in the mild
sense, as follows. Let $\{W(t,x); \, (t,x)\in [0,1]^2\}$ be a
Brownian sheet defined on some probability space
$(\Omega,\mathcal{F}, \P)$  and $\{\mathcal{F}_t;\, t\in [0,1]\}$
its natural filtration. A jointly measurable and adapted random
field $U=\{U(t,x);\; (t,x)\in [0,1]^2\}$ is a solution of
\eqref{eq:122} if it holds that
\begin{align}
U(t,x) = & \int_0^1\! G_t(x,y) u_0(y) dy+\int_0^t\!\!\int_0^1\! G_{t-s}(x,y)\,b(U(s,y)) dy ds \nonumber \\
& \qquad +\int_0^t\!\!\int_0^1\!G_{t-s}(x,y)W(ds,dy), \quad a.s.
\label{eq:mild}
\end{align}
for all $(t,x)\in (0,1]\times (0,1)$, where $G$ denotes the Green
function associated to the heat equation in $[0,1]$ with Dirichlet
boundary conditions. Existence, uniqueness and pathwise continuity
of the solution to \eqref{eq:mild} are a consequence of \cite[Thm
3.5]{walsh}. For the reader's convenience, we recall that the Green
function $G$ is given by
$$G_t(x,y)=2\sum_{n=1}^\infty \sin(n\pi x) \sin(n\pi y) e^{-n^2 \pi^2
t}.$$
 In this section, we aim to apply \cite[Thm. 1.4]{BJQ} in order
to deduce that the above solution $U$ can be approximated in law, in
the space $\C([0,1]^2)$ of continuous functions, by the family of
mild solutions $\{U_n\}_{n\geq 0}$, where $U_n$ solves a stochastic
heat equation perturbed by (the formal derivative of) either the
real or imaginary part of the family introduced in \eqref{eq:1}:
\[
X_\ep(t,x)=\ep K
\int_0^{\frac{t}{\ep}}\int_0^{\frac{x}{\ep}}\sqrt{sy}\,
\{\cos(\theta L(s,y))+i\sin(\theta L(s,y))\}\,dy ds,
\]
where we recall that $\{L(s,y);\, s,y\geq 0\}$ denotes a L\'evy
sheet and its L\'evy exponent is given by
$\Psi(\xi)=a(\xi)+ib(\xi)$. The constant $K$ is given in
\eqref{eq:50} and $\theta\in (0,2\pi)$, where we assume that $a(\theta)a(2\theta)\neq 0$. Note that, compared to
\eqref{eq:1}, in the above expression of $X_\ep$ we have modified
the variables' notation in order to properly match with the
framework of stochastic partial differential equations.

Let us be more precise about the above statement. First, we rewrite
$X_\ep$ in the following way:
\[
X_\ep(t,x)=n K \int_0^t\int_0^x \sqrt{sy}\, \left\{\cos\left(\theta
L\left(\sqrt{n}\,s,\sqrt{n}\, y\right)\right)+i\sin\left(\theta
L\left(\sqrt{n}\,s,\sqrt{n}\, y\right)\right)\right\}\,dy ds,
\]
with $n=\ep^{-2}$. Set
\[
\theta_n^1(s,y):= n K \sqrt{sy}\, \cos\left(\theta
L\left(\sqrt{n}\,s,\sqrt{n}\, y\right)\right) \quad \text{and}\quad
\theta_n^2(s,y):= n K \sqrt{sy}\, \sin\left(\theta
L\left(\sqrt{n}\,s,\sqrt{n}\, y\right)\right).
\]
Let $i\in \{1,2\}$ and consider the stochastic heat equation
\begin{equation*}
\frac{\partial U^i_n}{\partial t}(t,x)-\frac{\partial ^2
U^i_n}{\partial x^2}
(t,x)=b(U^i_n(t,x))+\theta^i_n(t,x),\qquad(t,x)\in [0,1]^2,
\end{equation*}
with initial condition $u_0$ and Dirichlet boundary conditions. The
mild form of this equation is given by
\begin{align}
U^i_n(t,x)= & \int_0^1\! G_t(x,y) u_0(y) dy+\int_0^t\!\!\int_0^1\!G_{t-s}(x,y)\,b(U^i_n(s,y)) dy ds \nonumber \\
& \qquad +\int_0^t\!\!\int_0^1\!G_{t-s}(x,y)\theta^i_n(s,y) dy ds.
\label{eq:123}
\end{align}
Owing to \cite[Sec. 3]{BJQ}, equation \eqref{eq:123} admits a unique
solution $U^i_n$ whose paths are continuous almost surely. Here is
the main result of the section:

\begin{theorem}\label{thm:heat}
For any $i\in\{1,2\}$, the sequence $\{U^i_n\}_{n\geq 1}$ converges
in law, as $n\to\infty$ and in the space $\C([0,1]^2)$, to the
solution $U$ of \eqref{eq:mild}.
\end{theorem}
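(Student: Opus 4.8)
The plan is to obtain Theorem~\ref{thm:heat} as a direct application of \cite[Thm.~1.4]{BJQ}, verifying its hypotheses for each noise density $\theta^i_n$, $i\in\{1,2\}$. Recall from the construction above that $\mathsf{Re}(X_\ep)(t,x)=\int_0^t\int_0^x\theta^1_n(s,y)\,dy\,ds$ and $\mathsf{Im}(X_\ep)(t,x)=\int_0^t\int_0^x\theta^2_n(s,y)\,dy\,ds$, with $n=\ep^{-2}$, so that the two integrated noise fields driving \eqref{eq:123} are exactly the real and imaginary parts of the complex field $X_\ep$ of \eqref{eq:1}. The conditions of \cite[Thm.~1.4]{BJQ} amount to an approximation-of-the-Brownian-sheet requirement on these integrated fields, together with uniform moment estimates controlling the associated stochastic-convolution terms; once both are in force, the abstract result yields the convergence in law of the mild solutions $U^i_n$ to $U$ in $\C([0,1]^2)$. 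I note that the special case in which $L$ is a standard Poisson sheet and $\theta=\pi$ is precisely the Kac-Stroock family treated in \cite[Sec.~4]{BJQ}, so the overall architecture of the verification is already available and the genuine task is to carry it out for a general L\'evy sheet $L$.

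First I would establish the convergence-in-law hypothesis. By Theorem~\ref{thm:main}, $X_\ep$ converges in law in $\C([0,1]^2;\bC)$ to a complex Brownian sheet, that is to $W^1+iW^2$ with $W^1,W^2$ independent standard Brownian sheets. Since the projections $z\mapsto\mathsf{Re}(z)$ and $z\mapsto\mathsf{Im}(z)$ are continuous from $\C([0,1]^2;\bC)$ onto $\C([0,1]^2)$, the continuous mapping theorem gives that $\mathsf{Re}(X_\ep)$ and $\mathsf{Im}(X_\ep)$ converge in law, respectively, to $W^1$ and $W^2$, each of which is a standard Brownian sheet (the correct normalization being guaranteed by the variance identities of Proposition~\ref{prop:6}). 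This supplies precisely the Brownian-sheet-approximation hypothesis of \cite{BJQ} for both $i=1$ and $i=2$.

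Next I would verify the moment conditions. The key technical inputs are the estimates developed inside the proof of Proposition~\ref{prop:2}: the factorization of $\E[e^{i\theta\sum_{j=1}^4(-1)^j\Delta_{0,0}L(x_j,y_j)}]$ through the L\'evy exponent and its domination \eqref{eq:3} by $e^{-\lambda(\bar Q)a(\theta)}$, together with the reduction to the four geometric configurations of Figure~\ref{fig:2}. These already yield the uniform fourth-moment bound $\sup_\ep\E[|\Delta_{s,t}X_\ep(s',t')|^4]\le C(s'-s)^2(t'-t)^2$, and hence the analogous bounds for the increments of $\mathsf{Re}(X_\ep)$ and $\mathsf{Im}(X_\ep)$. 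To match the hypotheses of \cite[Thm.~1.4]{BJQ}, I would re-run the same permutation and change-of-variables analysis with the Green kernel $G_{t-s}(x,y)$ inserted as a weight, thereby obtaining uniform-in-$\ep$ $L^p$ control of the terms $\int_0^t\int_0^1 G_{t-s}(x,y)\,\theta^i_n(s,y)\,dy\,ds$ and of their rectangular increments in $(t,x)$, exactly as is done for the Kac-Stroock kernels in \cite[Sec.~4]{BJQ}.

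The main obstacle I expect lies in this last step. Whereas Proposition~\ref{prop:2} bounds increments of $X_\ep$ over plain rectangles, the verification for the SPDE requires the analogous moment estimates for integrals against the singular Green kernel, so one must check that the exponential damping $e^{-\lambda(\bar Q)a(\theta)}$ coming from the real part of the L\'evy exponent continues to dominate once the singularity of $G_{t-s}(x,y)$ as $s\to t$ is present, and that the resulting bounds are uniform in $n=\ep^{-2}$. Controlling this interplay between the geometric decay inherited from $L$ and the heat-kernel singularity --- essentially transporting the Kac-Stroock computations of \cite[Sec.~4]{BJQ} to an arbitrary L\'evy exponent by means of the estimates of Proposition~\ref{prop:2} --- is the technical heart of the argument. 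Once these kernel-weighted moment estimates are established for both $i=1$ and $i=2$, all the hypotheses of \cite[Thm.~1.4]{BJQ} are satisfied, and the conclusion follows.
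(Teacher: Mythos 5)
Your first two paragraphs match the paper: the result is indeed obtained from \cite[Thm.~1.4]{BJQ}, and condition (i) (convergence of the finite-dimensional distributions of the integrated noises $\zeta_n$ to those of the Brownian sheet) is exactly what the paper extracts from Theorem \ref{thm:main}. But your verification of the moment hypotheses aims at the wrong target, and this is a genuine gap. Hypotheses (ii) and (iii) of \cite[Thm.~1.4]{BJQ} are formulated for \emph{arbitrary deterministic} test functions $f\in L^q([0,1]^2)$ (some $q\in[2,3)$) and $f\in L^2([0,1]^2)$ respectively; the Green kernel $G_{t-s}(x,y)$ never appears in them. The whole point of the abstract theorem is that the interplay between the noise and the singular heat kernel is handled once and for all inside the proof in \cite{BJQ}, so what you describe as ``the technical heart'' --- re-running the permutation analysis with $G$ inserted as a weight and controlling the heat-kernel singularity against the damping $e^{-\lambda(\bar Q)a(\theta)}$ --- is work that is neither required nor what the citation's hypotheses ask for. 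Establishing kernel-weighted estimates would not, as you claim in your last sentence, verify the hypotheses of \cite[Thm.~1.4]{BJQ}; conversely, if you bypass the abstract theorem you would have to redo its entire tightness-and-identification argument for $U^i_n$, which your proposal does not do.

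Moreover, the estimates that \emph{are} required do not ``already'' follow from Proposition \ref{prop:2}, as you assert. Condition (ii) is a second-moment bound of $\int f\,\theta^i_n$ by $\|f\|_{L^q}^2$ uniformly in $n$; the paper proves it via a dedicated lemma (the analogue of \cite[Lem.~4.2]{BJQ}, built on the four-orderings second-moment computation of Lemma \ref{lem:25} and the inequality $f(t_1,x_1)f(t_2,x_2)\sqrt{t_1x_1t_2x_2}\le \frac12\left(t_1x_1f(t_1,x_1)^2+t_2x_2f(t_2,x_2)^2\right)$) followed by the dyadic-decomposition argument of \cite[Prop.~4.1]{BJQ}, yielding Proposition \ref{prop:9}; none of this is contained in the fourth-moment tightness bound. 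Condition (iii) requires moments of even order $m>8$ of $\int_{s_0}^{s_0'}\int_{x_0}^{x_0'} f\,\theta^i_n$ bounded by $\bigl(\int_{s_0}^{s_0'}\int_{x_0}^{x_0'} f^2\bigr)^{m/2}$, where the doubling conditions $s_0'<2s_0$, $x_0'<2x_0$ are essential: in Proposition \ref{prop:10} the paper does extend the domination \eqref{eq:3} of Proposition \ref{prop:2} to $m$ points with $f$ as weight, but then uses $s_j<2s_0$, $y_j<2x_0$ to pull out the factor $2^m(s_0x_0)^{m/2}$, keeps the exponentials in the \emph{consecutive} gaps $y_{(j+1)}-y_{(j)}$ and $s_{(j+1)}-s_{(j)}$ weighted by $ns_0$ and $nx_0$, and reduces to equation (31) in the proof of \cite[Prop.~4.4]{BJQ}. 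Your proposal neither identifies these two conditions correctly nor supplies the mechanism ($f$-weights with arbitrary $f$, the doubling trick, moments of order $m>8$) by which they are obtained.
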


The proof of this theorem is based on \cite[Thm. 1.4]{BJQ}, where
sufficient conditions on a family of random fields
$\{\theta_n\}_{n\geq 1}$ have been established such that the
sequence of solutions to the stochastic heat equation driven by
$\theta_n$ converges in law to $U$, in the space of continuous
functions. The first requirement is that $\theta_n\in L^2([0,1]^2)$
a.s., and then there are the following conditions (see hypotheses
1.1, 1.2 and 1.3 in \cite{BJQ}):

\smallskip

\begin{itemize}
\item[(i)] The finite dimensional distributions of the processes
$$\zeta_n(t,x):=\int_0^t\int_0^x \theta_n(s,y) dyds,\quad (t,x)\in [0,1]^2,$$
converge in law to those of the Brownian sheet.
\item[(ii)] For some $q\in [2,3)$, there exists a positive constant $C_q$
such that, for any $f\in L^q([0,1]^2)$, it holds:
$$ \sup_{n\geq 1}\, \E\left[\left( \int_0^1 \int_0^1 f(t,x) \theta_n(t,x) \, dx dt\right)^2\right]\leq C_q \left(
\int_0^1
\int_0^1 |f(t,x)|^{q}\, dx dt\right)^{\frac 2{ q}}.$$
\item[(iii)] There exist $m> 8$ and a positive constant $C$
such that the following is satisfied: for all $s_0,\,s_0'\in [0,1]$
and $x_0,\,x_0'\in [0,1]$ satisfying $0<s_0<s_0'<2s_0$ and
$0<x_0<x_0'<2x_0$, and for any $f\in L^2([0,1]^2)$, it holds:
$$\sup_{n\geq 1} \, \E
\left[\left|\int_{s_0}^{s_0'}\int_{x_0}^{x_0'}f(s,y)\,\theta_n(s,y)dyds\right|^m\right]
\leq C \left( \int_{s_0}^{s_0'}\int_{x_0}^{x_0'}  f(s,y)^2 \, dy
ds\right)^{\frac m2}.$$
\end{itemize}

\smallskip

Hence, in the proof of Theorem \ref{thm:heat} we will prove the
validity of all above conditions in the case where $\theta_n$ is
given by $\theta_n^i$, for any $i\in \{1,2\}$. Indeed, as it will be
explained below, we will use similar arguments as those used in one
of the applications tackled in \cite{BJQ}, namely the case where
$\theta_n$ are given by the Kac-Stroock processes on the plane:
\[
\theta_n(t,x)= n \sqrt{tx} \,(-1)^{N_n(t,x)},
\]
where $N_n(t,x):= N\left(\sqrt{n}t,\sqrt{n}x\right)$, and
$\{N(t,x);\, (t,x)\in [0,1]^2\}$ is a standard Poisson process in
the plane.

We start with the following technical lemma, which is the analogous
of \cite[Lem. 4.2]{BJQ}:

\begin{lemma}
Let $f\in L^2([0,1]^2)$ and $\alpha\geq 1$. Then, for any $u,u'\in
(0,1)$ satisfying that $0<u<u'\leq 2^\alpha u$, it holds
$$\sup_{n\geq 1}\, \E\left[\left(  \int_0^1 \int_u^{u'} f(t,x) \theta^i_n(t,x)\, dx
dt\right)^2\right] \leq \frac{3}{a(\theta)^2}(2^{\alpha+1}-1) K^2
 \int_0^1 \int_u^{u'} f(t,x)^2 \,
dx dt,$$ for any $i\in \{1,2\}$.
\end{lemma}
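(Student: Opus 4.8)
The plan is to expand the second moment into a fourfold integral against the covariance of $\theta_n^i$, to estimate that covariance by an exponentially decaying kernel by means of the characteristic function formula \eqref{eq:999}, and then to integrate the kernel against $f$ through a symmetrization argument. Writing $z_j=(t_j,x_j)$ with $t_j\in[0,1]$ and $x_j\in[u,u']$, I would first use Fubini's theorem to get
$$\E\!\left[\Big(\int_0^1\!\!\int_u^{u'}\! f\,\theta_n^i\,dx\,dt\Big)^2\right]=\int f(z_1)f(z_2)\,\E\big[\theta_n^i(z_1)\theta_n^i(z_2)\big]\,dz_1\,dz_2,$$
the integral running over $([0,1]\times[u,u'])^2$.

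Next I would estimate the covariance. Setting $A=\theta L(\sqrt n\,t_1,\sqrt n\,x_1)$ and $B=\theta L(\sqrt n\,t_2,\sqrt n\,x_2)$, the product-to-sum identities give, for both $i=1$ and $i=2$, a combination $\tfrac12(\cos(A-B)\pm\cos(A+B))$. I would bound $|\E\cos(A\mp B)|\le|\E e^{i(A\mp B)}|$, express $A\mp B$ as a sum of increments of $L$ over \emph{disjoint} rectangles (which depends on the relative order of $t_1,t_2$ and of $x_1,x_2$), and apply \eqref{eq:999} together with $a(-\theta)=a(\theta)$ and $a(2\theta)\ge0$; the frequency $2\theta$ enters only through a factor $e^{-\lambda(\cdot)a(2\theta)}\le1$ that is simply discarded. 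This yields
$$\big|\E[\theta_n^i(z_1)\theta_n^i(z_2)]\big|\le n^2K^2\sqrt{t_1x_1t_2x_2}\;e^{-a(\theta)\,n\,R(z_1,z_2)}=:\kappa(z_1,z_2),$$
where, on each of the four regions fixed by the signs of $t_2-t_1$ and $x_2-x_1$, $R$ is a sum of two rectangle areas; for instance on $\{t_1\le t_2,\ x_1\le x_2\}$ one gets $R=(t_2-t_1)x_2+t_1(x_2-x_1)$, which vanishes only at the corner $(t_1,x_1)$ and grows in both coordinate directions. It is essential to keep this quadrant-wise form: collapsing $R$ to $|t_1x_1-t_2x_2|$ is too lossy, since it controls only the product $t_2x_2$ and integrates to a spurious factor of order $n\log(u'/u)$.

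Since $\kappa$ is symmetric in $z_1\leftrightarrow z_2$, applying $|f(z_1)f(z_2)|\le\tfrac12(f(z_1)^2+f(z_2)^2)$ reduces the problem to showing that the integrated kernel is uniformly bounded, namely
$$\sup_{n\ge1}\ \sup_{z_1}\ \int_0^1\!\!\int_u^{u'}\kappa(z_1,z_2)\,dx_2\,dt_2\ \le\ \frac{3}{a(\theta)^2}\,(2^{\alpha+1}-1)\,K^2 .$$

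The final step, which is the main obstacle, is to evaluate this integrated kernel on each quadrant. I would integrate in $x_2$ first, using the range restriction: because $x_1,x_2\in[u,u']$ with $u'\le 2^\alpha u$, one has $\sqrt{x_2}\le 2^{\alpha/2}\sqrt{x_1}$, and the $x_2$-integral of the exponential contributes a factor $(a(\theta)n\,t_2)^{-1}$ that combines with $\sqrt{t_2}$ into $t_2^{-1/2}$; the remaining $t_2$-integral is then controlled and the two factors $n^{-1}$ cancel the $n^2$, leaving a bound of order $K^2/a(\theta)^2$ on each region. The delicate points are (i) the $t^{-1/2}$ singularity at the origin — there is no lower cut-off on $t$ — which is integrable and is absorbed either through $t_2^{-1/2}\le t_1^{-1/2}$ or through a beta-type splitting of the $t_2$-integral near $0$; and (ii) the two mixed quadrants, where $R$ is not monotone along the integration variable, which I would handle by the area-splitting device already employed for case ii) of Proposition \ref{prop:2} (enlarging the region by a small rectangle and splitting the domain into two complementary pieces). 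Summing the four quadrant estimates and bookkeeping the factor $2^\alpha$ produced by $u'/u$ yields the stated constant, uniformly in $n$ and $z_1$.
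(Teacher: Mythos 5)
Your proposal is correct in substance but organizes the argument genuinely differently from the paper. The paper never estimates the covariance of $\theta_n^i$ directly: it bounds $\big(\int f\,\theta_n^i\big)^2$ by the squared modulus of the complex integral $nK\int\sqrt{tx}\,e^{i\theta L(\sqrt{n}t,\sqrt{n}x)}f$, so that only \emph{differences} $L(z_1)-L(z_2)$ appear and the frequency $2\theta$ never enters; it then reuses verbatim the ordering/change-of-variables algebra from Step 1 of Lemma \ref{lem:25} to reach the two kernels $e^{-a(\theta)n(x_2t_2-x_1t_1)}$ and $e^{-a(\theta)n((t_2-t_1)x_1+(x_2-x_1)t_1)}$ on $\{t_1\le t_2,\ x_1\le x_2\}$, applies the \emph{weighted} Young inequality $f(t_1,x_1)f(t_2,x_2)\sqrt{t_1x_1t_2x_2}\le\tfrac12\big(t_1x_1f(t_1,x_1)^2+t_2x_2f(t_2,x_2)^2\big)$, and delegates the four resulting integrals to \cite[Lem.~4.2]{BJQ}, whose explicit bounds $\tfrac{1}{2a(\theta)^2},\ \tfrac{2^\alpha}{2a(\theta)^2},\ \tfrac{2^\alpha}{2a(\theta)^2},\ \tfrac{4(2^\alpha-1)}{2a(\theta)^2}$ sum exactly to the stated constant. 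Your route --- product-to-sum identities plus the quadrant-wise characteristic-function bound, discarding $e^{-\lambda(\cdot)a(2\theta)}\le 1$, followed by the \emph{unweighted} symmetrization reducing matters to $\sup_{z_1}\int\kappa(z_1,\cdot)$ --- is also valid: your kernel $\kappa$ is correct (on the nested quadrant $R$ equals $t_2x_2-t_1x_1$ exactly, and your warning against collapsing $R$ to $|t_1x_1-t_2x_2|$ is apt precisely on the mixed quadrants), and your nested-quadrant computation is the right one. Two remarks, neither fatal. First, on the mixed quadrants no Proposition \ref{prop:2}-style area-splitting is actually needed: writing $R=(t_2-t_1)x_2+t_1(x_1-x_2)$ and integrating $t_2$ first (with rate $x_2\ge u$), after splitting $\sqrt{t_2}\le\sqrt{t_1}+\sqrt{t_2-t_1}$, the remaining $x_2$-integral carries the monotone factor $e^{-a(\theta)nt_1(x_1-x_2)}$ and closes with $\min\big(x_1-u,\tfrac{1}{a(\theta)nt_1}\big)$; the non-monotonicity you worry about disappears once the integration order is chosen properly. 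Second, your final constant is asserted rather than derived: a direct execution of your sup-kernel estimate yields a bound of the form $C(\alpha)K^2/a(\theta)^2$ with $C(\alpha)$ of order $2^\alpha$ (Gamma-type factors such as $\int_0^\infty\sqrt{v}\,e^{-v}dv$ appear on the mixed quadrants), which proves the lemma with a possibly different numerical constant --- entirely sufficient for Proposition \ref{prop:9} and Theorem \ref{thm:heat}, but not literally the constant $\tfrac{3}{a(\theta)^2}(2^{\alpha+1}-1)K^2$ of the statement, which the paper obtains only because its weighted symmetrization lets it inherit the exact bookkeeping of \cite{BJQ}.
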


\begin{proof}
We will only deal with the case involving $\theta_n^1$, since the
result for $\theta^2_n$ follows exactly in the same way. Note that
we clearly have
\[
\E\left[\left(  \int_0^1 \int_u^{u'} f(t,x) \theta^i_n(t,x)\, dx
dt\right)^2\right] \leq \E\left[\left| nK \int_0^1 \int_u^{u'}
\sqrt{tx}\, e^{i\theta L\left(\sqrt{n}\,t,\sqrt{n}\, x\right)}
f(t,x) \, dx dt\right|^2\right],
\]
and the latter term equals to
\begin{align*}
& n^2 K^2 \int_0^1 \int_u^{u'}\int_0^1 \int_u^{u'} \sqrt{t_1 x_1 t_2
x_2} \, \E\left[ e^{i\theta
\left(\Delta_{0,0}L\left(\sqrt{n}\,t_1,\sqrt{n}\, x_1\right) -
\Delta_{0,0}L\left(\sqrt{n}\,t_2,\sqrt{n}\, x_2\right) \right)}
\right]\\
& \qquad \qquad \qquad \times f(t_1,x_1)f(t_2,x_2) dx_1 dt_1 dx_2 dt_2.
\end{align*}
Observe that this expression is completely analogous as that at the
beginning of the first step in the proof of Lemma \ref{lem:25}.
Thus, the same arguments used therein yield
\begin{equation}\label{eq:124}
\E\left[\left(  \int_0^1 \int_u^{u'} f(t,x) \theta^i_n(t,x)\, dx
dt\right)^2\right] \leq 2\left(I_1^n + I_2^n\right),
\end{equation}
where
\begin{align*}
I_1^n & = n^2K^2 \int_0^1 \int_0^{t_2} \int_u^{u'} \int_u^{x_2}
f(t_1,x_1)f(t_2,x_2) \sqrt{t_1 x_1 t_2 x_2}\,
e^{-a(\theta)n (x_2t_2-x_1t_1)}\\
& \qquad \qquad \times \cos(b(\theta)n(x_2t_2-x_1t_1))
dx_1dx_2dt_1dt_2
\end{align*}
and
\begin{align*}
I_2^n & = n^2K^2 \int_0^1 \int_0^{t_2} \int_u^{u'} \int_u^{x_2}
f(t_1,x_1)f(t_2,x_2) \sqrt{t_1 x_1 t_2 x_2}\,
e^{-a(\theta)n \left((t_2-t_1)x_1 + (x_2-x_1)t_1\right)}\\
& \qquad \qquad \times \cos\left(b(\theta)n\left((t_2-t_1)x_1 +
(x_2-x_1)t_1\right)\right) dx_1dx_2dt_1dt_2.
\end{align*}
At this point, we apply the inequality $z w\leq \frac12 (z^2 + w^2)$
in such a way that
\[
f(t_1,x_1)f(t_2,x_2) \sqrt{t_1 x_1 t_2 x_2}\leq \frac12 \left( t_1
x_1 f(t_1,x_1)^2 + t_2 x_2 f(t_2,x_2)^2\right).
\]
This makes that both $I_1^n$ and $I_2^n$ can be bounded by the sum
of two terms of the form $I_{j,1}^n+I_{j,2}^n$, $j=1,2$,
respectively, where $I_{j,1}^n$ involves $f(t_1,x_1)$ and
$I_{j,2}^n$ involves $f(t_2,x_2)$. Then, once all cosinus are simply
bounded by 1, one observe that the resulting four terms are
completely analogous as those appearing in the proof of Lemma 4.2 in
\cite{BJQ}, and can be treated using the same kind of arguments.
Thus, we obtain that
\begin{align*}
I_{1,1}^n & \leq \frac12 \frac{1}{a(\theta)^2} K^2 \int_0^1 \int_u^{u'}
f(t_1,x_1)^2 dx_1dt_1,\\
I^n_{1,2}& \leq \frac12 \frac{2^\alpha}{a(\theta)^2} K^2 \int_0^1
\int_u^{u'} f(t_2,x_2)^2 dx_2dt_2,\\
I_{2,1}^n & \leq \frac12 \frac{2^\alpha}{a(\theta)^2} K^2 \int_0^1
\int_u^{u'} f(t_1,x_1)^2 dx_1dt_1,\\
I_{2,2}^n & \leq \frac12 \frac{4(2^\alpha-1)}{a(\theta)^2} K^2 \int_0^1
\int_u^{u'} f(t_2,x_2)^2 dx_2dt_2.
\end{align*}
Plugging everything together and using \eqref{eq:124}, we conclude
the proof.
\end{proof}

The above lemma allows us to prove the following proposition. In
fact, its proof follows exactly the same lines as that of
Proposition 4.1 in \cite{BJQ} and therefore will be omitted.
\begin{proposition}\label{prop:9}
Let $p>1$ and $f\in L^{2p}([0,1]^2)$. Then, there exists a positive
constant $C_p$ which does not depend on $f$ such that
\[
\sup_{n\geq 1}\, \E\left[\left( \int_0^1 \int_0^1 f(t,x)
\theta^i_n(t,x) \, dx dt\right)^2\right] \leq C_p \left( \int_0^1
\int_0^1 |f(t,x)|^{2p}\, dx dt\right)^{\frac 1 p},
\]
for any $i\in \{1,2\}$.
\end{proposition}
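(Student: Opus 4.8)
The plan is to mimic the proof of \cite[Prop. 4.1]{BJQ}, taking the preceding lemma as the local $L^2$ estimate (it plays here the role of \cite[Lem. 4.2]{BJQ}). The crucial observation is that the constant furnished by that lemma, namely $\frac{3}{a(\theta)^2}(2^{\alpha+1}-1)K^2$, depends on the pair $u,u'$ only through the scale ratio, encoded in $\alpha$. Consequently, on any spatial interval whose endpoints differ by a factor at most $2$ one may take $\alpha=1$ and bound it by the universal constant $\frac{9K^2}{a(\theta)^2}$. This immediately suggests decomposing the $x$-variable dyadically (there is no need to localize in $t$, since the lemma already integrates over the full interval $t\in[0,1]$) and recombining the pieces.

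First I would set $I_k:=(2^{-(k+1)},2^{-k}]$ for $k\geq 0$, so that $\{[0,1]\times I_k\}_{k\geq 0}$ covers $[0,1]^2$ up to a null set, and write
\[
S_n:=\int_0^1\!\!\int_0^1 f\,\theta^i_n\,dx\,dt=\sum_{k\geq 0} S_n^k,\qquad S_n^k:=\int_0^1\!\!\int_{I_k} f\,\theta^i_n\,dx\,dt.
\]
Since each $I_k$ has endpoint ratio equal to $2$, the preceding lemma with $\alpha=1$ gives, uniformly in $n$,
\[
\big(\E[(S_n^k)^2]\big)^{1/2}\leq \frac{3K}{a(\theta)}\Big(\int_0^1\!\!\int_{I_k} f(t,x)^2\,dx\,dt\Big)^{1/2}.
\]
By Minkowski's inequality in $L^2(\Omega)$ applied to $S_n=\sum_k S_n^k$, this yields
\[
\big(\E[S_n^2]\big)^{1/2}\leq \frac{3K}{a(\theta)}\sum_{k\geq 0}\Big(\int_0^1\!\!\int_{I_k} f^2\Big)^{1/2}.
\]

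Next I would convert these $L^2$ norms into $L^{2p}$ norms by two applications of Hölder's inequality. On each slab $[0,1]\times I_k$, of Lebesgue measure $2^{-(k+1)}$, Hölder with exponents $p$ and $p/(p-1)$ gives
\[
\int_0^1\!\!\int_{I_k} f^2\leq \Big(\int_0^1\!\!\int_{I_k}|f|^{2p}\Big)^{1/p}\big(2^{-(k+1)}\big)^{(p-1)/p}.
\]
Writing $a_k:=\big(\int_0^1\!\int_{I_k}|f|^{2p}\big)^{1/(2p)}$ and $r:=2^{-(p-1)/(2p)}$, which satisfies $0<r<1$ precisely because $p>1$, the last display reads $\big(\int_{I_k} f^2\big)^{1/2}\leq a_k\,r^{k+1}$. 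A second Hölder inequality, now for the series and with conjugate exponents $2p$ and $q:=2p/(2p-1)$, yields
\[
\sum_{k\geq 0} a_k\,r^{k+1}\leq \Big(\sum_{k\geq 0} a_k^{2p}\Big)^{1/(2p)}\Big(\sum_{k\geq 0} r^{(k+1)q}\Big)^{1/q}.
\]
Here $\sum_k a_k^{2p}=\int_0^1\!\int_0^1|f|^{2p}$, while $\sum_{k\geq 0} r^{(k+1)q}=r^q/(1-r^q)$ converges since $r<1$. Combining the displays and squaring gives the claim with
\[
C_p=\frac{9K^2}{a(\theta)^2}\Big(\tfrac{r^q}{1-r^q}\Big)^{2/q},
\]
which depends only on $p$ (through $r$ and $q$) and on the fixed constants $K$ and $a(\theta)$, but not on $f$ or $n$.

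I do not expect a genuine obstacle, since the substantive analytic work—controlling the oscillations of $e^{i\theta L}$ and producing the decay $e^{-a(\theta)(\cdots)}$—has already been absorbed into the preceding lemma. The only points requiring a little care are justifying the term-by-term use of Minkowski's inequality for the infinite sum $S_n=\sum_k S_n^k$ (which is legitimate once one argues first with finite truncations $\sum_{k\leq N}$ and passes to the limit, using that $f\in L^{2p}([0,1]^2)\subset L^2([0,1]^2)$ and $\theta^i_n\in L^2([0,1]^2)$ a.s.), and bookkeeping the exponents so that the power $1/p$ on the right-hand side emerges exactly. The hypothesis $p>1$ is essential rather than cosmetic: it is exactly what makes $r<1$ and the geometric series summable, which in turn reflects the fact that the local constant $(2^{\alpha+1}-1)$ of the preceding lemma degenerates as the dyadic scales accumulate near the origin.
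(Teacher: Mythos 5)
Your proof is correct and follows essentially the route the paper itself intends: the paper omits the argument, stating that it runs along the same lines as Proposition 4.1 of \cite{BJQ}, which is precisely the dyadic decomposition of the $x$-variable into the slabs $[0,1]\times(2^{-(k+1)},2^{-k}]$, the application of the preceding lemma with $\alpha=1$ on each slab, Minkowski's inequality in $L^2(\Omega)$, and the two H\"older inequalities producing the exponent $\frac1p$. Your bookkeeping of the constants and the role of $p>1$ in making the geometric series $\sum_k r^{(k+1)q}$ converge are both accurate.
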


The last needed ingredient for the proof of Theorem \ref{thm:heat}
is the following result, which is the analogous of \cite[Prop.
4.4]{BJQ} in our setting.

\begin{proposition}\label{prop:10}
Let $m\in\mathbb N$ be an even number and $f\in L^2([0,1]^2)$. Then, there exists a
positive constant $C_m$ which does not depend on $f$ such that, for
all $s_0,\,s_0', x_0,\,x_0'\in [0,1]$ satisfying $0<s_0<s_0'<2s_0$
and $0<x_0<x_0'<2x_0$, we have that
$$\sup_{n\geq
1}\,
\E\left[\left(\int_{s_0}^{s_0'}\int_{x_0}^{x_0'}f(s,y)\,\theta^i_n(s,y)dyds\right)^m\right]
\leq C_m \left( \int_{s_0}^{s_0'}\int_{x_0}^{x_0'}  f(s,y)^2 \, dy
ds\right)^{\frac m2},
$$
for any $i\in \{1,2\}$.
\end{proposition}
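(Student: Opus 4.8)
The plan is to reduce both cases $i=1,2$ to a single complex estimate and then mimic the moment computation for the Kac--Stroock processes in \cite[Prop. 4.4]{BJQ}. Writing $R:=[s_0,s_0']\times[x_0,x_0']$ and
$$Z_n:=nK\int_{s_0}^{s_0'}\int_{x_0}^{x_0'} f(s,y)\sqrt{sy}\, e^{i\theta L(\sqrt n s,\sqrt n y)}\,dy\,ds,$$
we have $\int_R f\,\theta_n^1=\mathsf{Re}(Z_n)$ and $\int_R f\,\theta_n^2=\mathsf{Im}(Z_n)$. Since $m$ is even, $(\mathsf{Re}\,Z_n)^m\le |Z_n|^m$ and $(\mathsf{Im}\,Z_n)^m\le|Z_n|^m$, so it suffices to bound $\E[|Z_n|^m]$, handling both cases at once. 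I would then write $|Z_n|^m=Z_n^{m/2}\,\overline{Z_n}^{\,m/2}$ and expand it as an $m$-fold integral over $R^m$, in which $m/2$ of the points $P_j=(\sqrt n s_j,\sqrt n y_j)$ carry the exponent $+i\theta$ and the remaining $m/2$ carry $-i\theta$. Pushing the modulus inside, the task reduces to controlling
$$\Big|\E\big[e^{i\theta\sum_{j=1}^m \varepsilon_j \Delta_{0,0}L(P_j)}\big]\Big|,\qquad \varepsilon_j=\pm1,\ \ \#\{j:\varepsilon_j=1\}=\#\{j:\varepsilon_j=-1\}=\tfrac m2.$$

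Next I would split the integral according to the $m!$ orderings of the abscissas and the $m!$ orderings of the ordinates, exactly as in the $24\times24$ decomposition used in the proof of Proposition \ref{prop:2}. For each fixed ordering the signed sum $\sum_j\varepsilon_j\Delta_{0,0}L(P_j)$ rewrites as a linear combination $\sum_Q c_Q\,\Delta_{\sqrt n Q} L$ of increments over the elementary rectangles $Q$ (in the original $(s,y)$-plane, of area $\lambda(Q)$) generated by the points, with integer coefficients $c_Q$. By the independence of the increments and \eqref{eq:999}, the characteristic function factorizes and, after taking moduli, only the real part of the L\'evy exponent survives:
$$\Big|\E\big[e^{i\theta\sum_j\varepsilon_j\Delta_{0,0}L(P_j)}\big]\Big|=\prod_Q e^{-n\,\lambda(Q)\,a(c_Q\theta)},$$
the factor $n$ arising from the scaling $P_j=\sqrt n\,(s_j,y_j)$, and using $a\ge0$ together with $a(-\xi)=a(\xi)$. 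Since every factor is $\le1$, I am free to discard any region by bounding its factor by $1$; I would retain only a subcollection of rectangles with coefficients $c_Q\in\{\pm1,\pm2\}$, whose decay rates $a(\theta)$ and $a(2\theta)$ are strictly positive under the standing hypothesis $a(\theta)a(2\theta)\ne0$. The geometric content---that the retained low-coefficient rectangles already account for enough area to integrate out all variables---is precisely the analysis carried out for the Kac--Stroock kernels in \cite[Prop. 4.4]{BJQ}, the only modification being that the Poisson decay rate is replaced here by $a(\cdot)$.

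To convert these bounds into the stated $L^2$-estimate I would invoke the doubling conditions $s_0'<2s_0$ and $x_0'<2x_0$, which make each $\sqrt{s_jy_j}$ comparable, up to a universal constant, to $\sqrt{s_0x_0}$, together with the pointwise inequality $|f(P_i)f(P_j)|\le\frac12(f(P_i)^2+f(P_j)^2)$ already exploited in the lemma preceding Proposition \ref{prop:9}; this pairs the $m$ integrand factors into $m/2$ squares. Integrating each of the $m/2$ blocks against its retained exponential factor---where the $n$ coming from the scaling cancels the prefactor $(nK)^m$ and makes the bound uniform in $n$---yields one factor $\int_R f(s,y)^2\,dy\,ds$ per block, hence the claimed power $\big(\int_R f^2\big)^{m/2}$. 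I expect the main obstacle to be exactly this combinatorial--geometric bookkeeping: verifying, uniformly over all orderings and over $n\ge1$, that the rectangles of coefficient $\pm1$ and $\pm2$ supply precisely the exponential decay needed to perform every integration and to reconstruct the $L^2$-norm to the power $m/2$. Since the underlying geometry is identical to that of the Kac--Stroock case, I would organize this step by reduction to \cite[Prop. 4.4]{BJQ}, checking only that replacing the Poisson rate by $a(\theta),a(2\theta)$ leaves the combinatorics unaffected.
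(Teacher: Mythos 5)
Your proposal is correct and follows essentially the same route as the paper: both reduce the cases $i=1,2$ to a single complex field via $(\mathsf{Re}\,Z_n)^m\le|Z_n|^m$ for even $m$, expand the $m$-th moment into an $m$-fold integral, apply the ordering decomposition together with the independence/characteristic-function estimate from the proof of Proposition \ref{prop:2} (culminating in \eqref{eq:3}), use the doubling conditions $s_0'<2s_0$, $x_0'<2x_0$ to extract $(s_0x_0)^{m/2}$, and conclude by reduction to the Kac--Stroock computation in \cite[Prop.\ 4.4]{BJQ}. The only cosmetic difference is that the paper's estimate \eqref{eq:3} discards all rectangles with coefficient in $\{0,\pm2\}$ (bounding their factors by $1$) and thus needs only $a(\theta)>0$, whereas you also retain the $\pm2$ rectangles and invoke $a(2\theta)>0$; this is harmless but unnecessary.
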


\begin{proof}
Let $i\in \{1,2\}$. For any $(s_0,x_0)\in [0,1]^2$, we define
\[
Z^i_n(s_0,x_0):=\int_0^{s_0}\int_0^{x_0} f(s,y)\theta_n^i(s,y)dy ds.
\]
Observe that, for all $(0,0)\leq (s_0,x_0)<(s_0',x_0')\leq (1,1)$,
we have
\begin{equation}\label{eq:125}
\E\left[\left(\Delta_{s_0,x_0} Z_n^i(s_0',x_0')\right)^m\right] \leq
\E\left[\left|\Delta_{s_0,x_0} \bar{Z}_n(s_0',x_0')\right|^m\right],
\end{equation}
where the random field $\bar{Z}_n$, which does not depend on $i$, is
complex-valued and given by
\[
\bar{Z}_n(s_0,x_0):=\int_0^{s_0}\int_0^{x_0}
f(s,y)\left(\theta_n^1(s,y)+ i \theta_n^2(s,y)\right)dy ds
\]
(here $i=\sqrt{-1}$). In order to bound the right hand-side of
\eqref{eq:125}, we can proceed as in the first part of the proof of
the tightness result Proposition \ref{prop:2}, obtaining
\begin{align*}
\E\left[\left(\Delta_{s_0,x_0} Z_n^i(s_0',x_0')\right)^m\right] & \leq
n^m K^m \int_{[s_0,s_0']^m} \int_{[x_0,x_0']^m} \prod_{j=1}^m f(s_j,y_j) \sqrt{s_jy_j} \\
& \qquad \qquad \times \left| \E\left[ e^{i\theta \sum_{j=1}^m (-1)^j \Delta_{0,0}L(\sqrt{n}\, s_j,\sqrt{n}\, y_j)}
\right]\right| dy_1 \cdots dy_m ds_1\cdots ds_m.
\end{align*}
At this point, we apply that $y_j<x_0'<2x_0$ and $s_j<s_0'<2s_0$, and we compute the modulus of the
expectation as it has been done in the proof of Proposition \ref{prop:2}; more precisely, using the method set up
therein in order to end up with the estimate \eqref{eq:3}. Thus, we can infer that
\begin{align*}
&\E\left[\left(\Delta_{s_0,x_0} Z_n^i(s_0',x_0')\right)^m\right] \\
& \qquad \leq
2^m (s_0x_0)^{\frac m2} n^m K^m
\int_{[s_0,s_0']^m} \int_{[x_0,x_0']^m} \prod_{j=1}^m f(s_j,y_j) \, e^{-a(\theta) n s_0 \left((y_{(m)}-y_{(m-1)})+\cdots+(y_{(2)}-y_{(1)})\right)}\\
& \qquad \qquad\qquad \qquad \qquad \times e^{-a(\theta) n x_0 \left((s_{(m)}-s_{(m-1)})+\cdots+(s_{(2)}-s_{(1)})\right)}\,
dy_1 \cdots dy_m ds_1\cdots ds_m\\
& \qquad =
2^m (s_0x_0)^{\frac m2} m!\, n^m K^m
\int_{[s_0,s_0']^m} \int_{[x_0,x_0']^m} \prod_{j=1}^m f(s_j,y_j)  \,
e^{-a(\theta) n s_0 \left((y_{(m)}-y_{(m-1)})+\cdots+(y_{(2)}-y_{(1)})\right)}\\
& \qquad \qquad \qquad \qquad \qquad\times e^{-a(\theta) n x_0 \left((s_m-s_{m-1})+\cdots+(s_{2}-s_{1})\right)}\,
1_{\{s_1\leq \cdots\leq s_m\}} \, dy_1 \cdots dy_m ds_1\cdots ds_m.
\end{align*}
Note that the latter expression is almost equal to that in the right
hand-side of equation (31) in the proof of \cite[Prop. 4.4]{BJQ}.
Hence, we can conclude the proof exactly in the same way as in that
result.
\end{proof}

\smallskip

{\it{Proof of Theorem \ref{thm:heat}.}} As explained above, we need that $\theta^i_n\in L^2([0,1]^2)$, a.s.,
which is clear by definition of the random fields $\theta^i_n$, $i=1,2$, and that conditions (i), (ii) and
(iii) are fulfilled.

First, note that (i) is a consequence of Theorem \ref{thm:main}.
Secondly, Proposition \ref{prop:9} implies that (ii) is satisfied
and, finally, Proposition \ref{prop:10} assures the validity of
condition (iii). \qed


\section*{Acknowledgement}

The authors thank the anonymous referee for a careful reading of the
manuscript.


\end{document}